\newtheorem{theorem}{Theorem}
\newtheorem{lemma}[theorem]{Lemma}
\newtheorem{proposition}[theorem]{Proposition}
\newtheorem{question}[theorem]{Question}
\newtheorem{definition}[theorem]{Definition}
\numberwithin{equation}{section}
\DeclareMathOperator{\lin}{lin}
\DeclareMathOperator{\dens}{dens}
\DeclareMathOperator{\cf}{cf}
\newtheoremstyle{TheoremNum}
{\topsep}{\topsep}              
{\itshape}                      
{}                              
{\bfseries}                     
{.}                             
{ }                             
{\thmname{#1}\thmnote{ \bfseries #3}}
\theoremstyle{TheoremNum}
\newtheorem{reptheorem}{Theorem}
\newtheoremstyle{TheoremNum}
{\topsep}{\topsep}              
{\itshape}                      
{}                              
{\bfseries}                     
{.}                             
{ }                             
{\thmname{#1}\thmnote{ \bfseries #3}}
\theoremstyle{TheoremNum}
\begin{document}
	
	\newcommand{\cc}{\mathfrak{c}}
	\newcommand{\N}{\mathbb{N}}
	\newcommand{\BB}{\mathbb{B}}
	\newcommand{\C}{\mathbb{C}}
	\newcommand{\Q}{\mathbb{Q}}
	\newcommand{\R}{\mathbb{R}}
	\newcommand{\Z}{\mathbb{Z}}
	\newcommand{\T}{\mathbb{T}}
	\newcommand{\st}{*}
	\newcommand{\PP}{\mathbb{P}}
	\newcommand{\rin}{\right\rangle}
	\newcommand{\SSS}{\mathbb{S}}
	\newcommand{\forces}{\Vdash}
	\newcommand{\dom}{\text{dom}}
	\newcommand{\osc}{\text{osc}}
	\newcommand{\F}{\mathcal{F}}
	\newcommand{\h}{\mathcal{H}}
	\newcommand{\A}{\mathcal{A}}
	\newcommand{\B}{\mathcal{B}}
	\newcommand{\I}{{\mathcal{H}_\sigma}}
	\newcommand{\X}{\mathcal{X}}
	\newcommand{\Y}{\mathcal{Y}}
	\newcommand{\CC}{\mathcal{C}}
	\newcommand{\non}{\mathfrak{non}}
	\newcommand{\add}{\mathfrak{add}}
	\newcommand{\cov}{\mathfrak{cov}}
	\newcommand{\cof}{\mathfrak{cof}}
	
	\author{Damian G\l odkowski}
	\address{Faculty of Mathematics, Informatics, and Mechanics, 
		University of Warsaw, ul. Banacha 2, 02-097 Warszawa, Poland}
	\email{\texttt{d.glodkowski@uw.edu.pl}}

	\author{Piotr Koszmider}
	\address{Institute of Mathematics of the Polish Academy of Sciences,
		ul.  \'Sniadeckich 8,  00-656 Warszawa, Poland}
	\email{\texttt{piotr.koszmider@impan.pl}}
	
	\thanks{The authors were partially supported by the NCN (National Science
		Centre, Poland) research grant no.\ 2020/37/B/ST1/02613.}

	\subjclass[2020]{03E35, 46B26, 54A35}

	\title{On coverings of Banach spaces and their subsets by  hyperplanes}

	\begin{abstract} 
		Given a Banach space we consider the $\sigma$-ideal of all of 
		its subsets  which are covered by 
		countably many hyperplanes and investigate its standard cardinal characteristics
		as the additivity, the covering number, the uniformity, the cofinality.
		We determine their values for separable Banach spaces, and 
		approximate them for  nonseparable Banach spaces. The remaining questions 
		reduce to deciding if the following can be proved in ZFC for every nonseparable Banach space $X$:
		(1) $X$ can be  covered by $\omega_1$-many
		of its hyperplanes; (2)  All subsets of $X$ of  cardinalities less than $\cf([\dens(X)]^\omega)$
		can    be covered by countably many hyperplanes.
		We prove (1) and (2) for all Banach spaces in many well-investigated classes and that they
		are consistent with any possible size of the continuum. (1) is related to the problem whether every compact Hausdorff space which has small diagonal is metrizable and (2) to  large cardinals.
		
	\end{abstract}
	
	\maketitle
	
	\section{Introduction}
	
	All Banach spaces considered in this paper are of dimension bigger than $1$ and over the reals. 
	For unexplained terminology see Section \ref{terminology}.
	A hyperplane of a Banach space $X$ is a one-codimensional closed subspace of $X$. It is easy to see that
	it is nowhere dense in $X$. The
	family of all hyperplanes of $X$ will be denoted by $\h(X)$.
	Given a Banach space $X$  one can define the hyperplane ideal
	$\I$ of $X$ as
	$$\I(X)=\{Y\subseteq X: \exists \F\subseteq \h(X) \ \  Y\subseteq \bigcup \F, \ \F\ \hbox{countable}\}.$$
	That is, $\I(X)$ is the family of all subsets
	of $X$ which can be covered by countably many hyperplanes of $X$.
	By the Baire category theorem $X\not\in \I(X)$ for any Banach space $X$.
	We consider the standard  cardinal characteristics of the ideal $\I(X)$:  
	\begin{itemize}
		\item $\add(X)$ is the minimal cardinality of a family of sets from $\I(X)$ whose union is not in $\I(X)$,
		\item $\cov(X)$ is the minimal cardinality of a family of sets from $\I(X)$ whose union is equal to $X$,
		\item $\non(X)$ is the minimal cardinality of a subset of $X$ that is not in $\I(X)$,
		\item $\cof(X)$ is the minimal cardinality of a family of sets from $\I(X)$ such that each member of $\I(X)$ is contained in some element of that family. 
	\end{itemize}
	Such cardinal characteristics are standard tools for investigating  the combinatorial properties of
	a $\sigma$-ideal. The most known case are their applications to the understanding of the
	ideal of Lebesgue measure zero sets and
	the ideal of meager sets of the reals (see e.g. \cite{bartoszynski}). It is easy to observe that
	if the ideal  is proper and  contains all singletons we have 
	the following inequalities: $\add \leq \cov \leq \cof$ and  $\add \leq \non \leq \cof$. 
	The purpose of this paper is to investigate the possible values of the above cardinals for
	the ideal $\I(X)$ and understand how they depend on $X$. A somewhat surprising conclusion is
	that the values depend almost entirely only on the density of $X$ and the $X^*$ or even are fixed for 
	all separable and all nonseparable Banach spaces.
	The first result presented in Section \ref{separable} describes these values for all separable Banach spaces.
	It is an immediate consequence of appropriately formulated result from \cite{klee}:
	
	\begin{theorem}\label{main-separable} Suppose that $X$ is a separable Banach space
		of dimension bigger than $1$. Then
		the following equalities hold:
		\begin{itemize}
			\item $\mathfrak{add}(X)=\omega_1$, 
			\item $\mathfrak{non}(X)=\omega_1$, 
			\item $\mathfrak{cov}(X)=\mathfrak{c}$,
			\item $\mathfrak{cof}(X)=\mathfrak{c}$.
		\end{itemize}
	\end{theorem}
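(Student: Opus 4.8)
The plan is to isolate the one nontrivial ingredient — exactly the reformulation of Klee's result alluded to in the excerpt — and then obtain all four equalities from elementary cardinal arithmetic together with the fact that $\I(X)$ is a proper $\sigma$-ideal containing the singletons.

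The key lemma I would use is: \emph{every separable Banach space $X$ with $\dim(X)>1$ contains a set $\Gamma$ with $|\Gamma|=\cc$ such that $\Gamma\cap H$ is countable} (in fact finite) \emph{for every $H\in\h(X)$.} For $\dim(X)=n<\infty$ one takes $\Gamma$ to be the range of the moment curve $t\mapsto(1,t,t^2,\dots,t^{n-1})$, which is injective, so $|\Gamma|=\cc$; a hyperplane through $0$ is the zero set of a nonzero polynomial of degree $\le n-1$, hence meets $\Gamma$ in at most $n-1$ points. For $\dim(X)=\infty$, fix a linearly independent sequence $(x_k)_{k\in\N}$ whose linear span is dense (possible since $X$ is separable) and set $\gamma(t)=\sum_k t^k x_k/(2^k\|x_k\|)$ for $t\in[0,1]$; the series converges, and for any $f\in X^\st\setminus\{0\}$ the function $t\mapsto f(\gamma(t))=\sum_k f(x_k)t^k/(2^k\|x_k\|)$ is real-analytic on a neighbourhood of $[0,1]$ and not identically zero (the span is dense), hence has only finitely many zeros in $[0,1]$; choosing $f$ with $f(x_1)\ne 0$ makes $f\circ\gamma$ nonconstant, so $\gamma$ is finite-to-one and $\Gamma=\gamma([0,1])$ has size $\cc$. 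I also record two trivial cardinality facts: $|X|=\cc$ ($X$ is a separable complete metric space, so $|X|\le\cc$, and $|X|\ge\cc$ since it contains a two-dimensional subspace), and $|\h(X)|=\cc$ (a two-dimensional subspace already carries $\cc$ distinct hyperplane traces, and a continuous functional is determined by its restriction to a fixed countable dense subset of $X$).

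Granting this, the equalities follow quickly. Since a countable union of countable unions of hyperplanes is again such, $\I(X)$ is a $\sigma$-ideal, and it is proper (Baire) and contains all singletons, so $\add(X)\ge\omega_1$ and the usual inequalities $\add(X)\le\non(X)\le\cof(X)$, $\cov(X)\le\cof(X)$ hold. Every countable $A\subseteq X$ lies in $\I(X)$: using $\dim X>1$, for each $a\in A$ pick a nonzero $f_a\in X^\st$ with $f_a(a)=0$, so $A\subseteq\bigcup_{a\in A}\ker f_a$; hence $\non(X)\ge\omega_1$. Conversely, any $A\subseteq\Gamma$ with $|A|=\omega_1$ fails to be in $\I(X)$: if $A\subseteq\bigcup_n H_n$ then $A=\bigcup_n(A\cap H_n)$ with each $A\cap H_n\subseteq\Gamma\cap H_n$ countable, forcing $|A|\le\omega$, a contradiction. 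So $\non(X)=\omega_1$, and therefore $\add(X)=\omega_1$.

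Finally, $\cov(X)\le\cc$ because for each $x\in X$ we may choose a hyperplane $H_x\ni x$, whence $X=\bigcup_{x\in X}H_x$ uses only $|X|=\cc$ hyperplanes; and $\cov(X)\ge\cc$ because if $X=\bigcup_{\alpha<\kappa}H_\alpha$ with $\kappa$ infinite, then $\cc=|\Gamma|\le\sum_{\alpha<\kappa}|\Gamma\cap H_\alpha|\le\kappa\cdot\omega=\kappa$. For cofinality, the family $\{\bigcup\F:\F\in[\h(X)]^{\le\omega}\}$ is cofinal in $\I(X)$ by the very definition of $\I(X)$ and has size at most $|\h(X)|^{\aleph_0}=\cc^{\aleph_0}=\cc$, so $\cof(X)\le\cc$; combined with $\cof(X)\ge\cov(X)=\cc$ this yields $\cof(X)=\cc$. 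The only real work is the key lemma — producing the ``curve in general position'' $\Gamma$; I expect the main subtlety to be the infinite-dimensional construction, where one must simultaneously guarantee that every hyperplane meets $\Gamma$ in a small set (via real-analyticity of $f\circ\gamma$, using density of $\operatorname{span}(x_k)$) and that $|\Gamma|=\cc$ (via finite-to-one-ness). This is presumably exactly the ``appropriately formulated'' statement extracted from \cite{klee}.
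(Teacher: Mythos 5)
Your proof is correct and takes essentially the same route as the paper's: your key lemma is exactly Klee's theorem (Proposition \ref{klee} in the paper), established there by the same analytic-curve device $y_\lambda=\sum_n\lambda^n x_n$ for $\lambda\in(0,1/2)$, and the four equalities are then extracted by the same elementary counting arguments via the inequalities $\add\le\non$, $\cov\le\cof$ and the bound $|\h(X)|\le\mathfrak{c}$.
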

	In fact the values of $\mathfrak{add}$ and $\mathfrak{cof}$ are always trivial 
	(Propositions \ref{basic-add}, \ref{basic-cof}) due to an elementary fact that
	$H\subseteq G$ implies $H=G$ for any two $G, H\in \h(X)$ any $X$ (Proposition \ref{inclusion}).
	The results from Section \ref{basics} provide also much information about the general case including the nonseparable case:
	\begin{theorem}\label{main-nonseparable} Suppose that $X$ is a
		Banach  space of dimension bigger than $1$. Then
		the following equalities and inequalities hold:
		\begin{itemize}
			\item $\mathfrak{add}(X)=\omega_1$, 
			\item $\omega_1\leq\mathfrak{cov}(X)\leq\mathfrak{c}$,
			\item $\dens(X)\leq \mathfrak{non}(X)\leq \cf([\dens(X)]^\omega)$, 
			\item $\mathfrak{cof}(X)=|X^*|$.
		\end{itemize}
	\end{theorem}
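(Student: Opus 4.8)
The plan is to treat the four items separately, with the real work concentrated in the upper bound for $\non$. The bound on $\add$, the range of $\cov$, the value of $\cof$, and the lower bound for $\non$ are ``soft'': they follow from the $\sigma$-ideal structure of $\I(X)$, the Baire category theorem, Hahn--Banach, and one elementary observation used repeatedly below: \emph{if a hyperplane $H$ is contained in a countable union $\bigcup_n G_n$ of hyperplanes, then $H=G_n$ for some $n$} --- indeed $H$ is itself a Banach space and $H=\bigcup_n(H\cap G_n)$, so by Baire category some closed subspace $H\cap G_n$ is non-meager in $H$, hence equals $H$, whence $H=G_n$ by Proposition~\ref{inclusion}. \emph{Additivity and the range of $\cov$.} Since $\I(X)$ is a $\sigma$-ideal, $\omega_1\le\add(X)\le\cov(X)$. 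As $\dim X>1$ forces $\dim X^*\ge 2$, the space $X$ has at least $\mathfrak{c}$ pairwise distinct hyperplanes; fixing $\omega_1$ of them, the observation shows their union is not covered by countably many hyperplanes, so it witnesses $\add(X)\le\omega_1$. For $\cov(X)\le\mathfrak{c}$, fix linearly independent $f,g\in X^*$ and let $\phi=(f,g)\colon X\to\mathbb{R}^2$: each of the $\mathfrak{c}$ lines $\ell$ through the origin of $\mathbb{R}^2$ pulls back to a hyperplane $\phi^{-1}(\ell)$ of $X$, and these $\mathfrak{c}$ hyperplanes cover $X$.

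\emph{Cofinality.} For $\cof(X)\le|X^*|$, the countable unions of hyperplanes form a cofinal family in $\I(X)$; since hyperplanes correspond to the $1$-dimensional subspaces of $X^*$ we have $|\h(X)|=|X^*|$, and $|X^*|^{\aleph_0}=|X^*|$ (an infinite-dimensional Banach space $Z$ satisfies $|Z|=\dens(Z)^{\aleph_0}$; the finite-dimensional case being trivial), so this family has size $|X^*|$. For $\cof(X)\ge|X^*|$, let $\mathcal{D}$ be cofinal in $\I(X)$; fix for each $D\in\mathcal{D}$ a countable family of hyperplanes covering $D$, and for each $H\in\h(X)$ a set $D_H\in\mathcal{D}$ with $H\subseteq D_H$. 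By the observation, $H$ is one of the countably many hyperplanes chosen for $D_H$, so each $D\in\mathcal{D}$ is assigned to only countably many hyperplanes; hence $|\mathcal{D}|\ge|\h(X)|=|X^*|$.

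\emph{Uniformity.} For $\dens(X)\le\non(X)$: if $X$ is separable this is immediate as then $\dens(X)=\aleph_0<\omega_1=\add(X)\le\non(X)$; if $X$ is nonseparable and $|A|<\dens(X)$, then $\overline{\operatorname{span}}(A)$ has density $\le\max(\aleph_0,|A|)<\dens(X)$, hence is a proper closed subspace, hence, by Hahn--Banach, lies in a hyperplane, so $A\in\I(X)$. The substantive part is $\non(X)\le\cf([\dens(X)]^\omega)$; for separable $X$ this is Theorem~\ref{main-separable}, so assume $X$ nonseparable, put $\kappa=\dens(X)$, $\lambda=\cf([\kappa]^\omega)$, and fix a dense set $\{d_\xi:\xi<\kappa\}$ and a cofinal $\mathcal{F}\subseteq[\kappa]^\omega$ with $|\mathcal{F}|=\lambda$. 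For $S\in[\kappa]^\omega$ put $X_S=\overline{\operatorname{span}}\{d_\xi:\xi\in S\}$, a separable subspace, and after enlarging every member of $\mathcal{F}$ to contain two indices with linearly independent $d_\xi$ we may assume $\dim X_S>1$ for $S\in\mathcal{F}$. By Theorem~\ref{main-separable} applied to each such $X_S$, choose $A_S\subseteq X_S$ with $|A_S|=\omega_1\le\lambda$ and $A_S\notin\I(X_S)$, and set $A=\bigcup_{S\in\mathcal{F}}A_S$, so $|A|\le\lambda$. To see $A\notin\I(X)$, take any sequence $(f_n)$ in $X^*\setminus\{0\}$. For each $n$, the set $\{S\in[\kappa]^\omega:f_n|_{X_S}=0\}$ is not cofinal in $([\kappa]^\omega,\subseteq)$ (otherwise $f_n$ would vanish on the dense set $\{d_\xi:\xi<\kappa\}$), so there is $G_n\in[\kappa]^\omega$ with $f_n|_{X_T}\ne 0$ for all $T\supseteq G_n$. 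Choosing $F\in\mathcal{F}$ with $F\supseteq\bigcup_n G_n$ makes every $f_n|_{X_F}$ a nonzero functional on $X_F$, so $\ker f_n\cap X_F$ is a hyperplane of $X_F$; hence $A\subseteq\bigcup_n\ker f_n$ would give $A_F\subseteq\bigcup_n(\ker f_n\cap X_F)$, covering $A_F$ by countably many hyperplanes of $X_F$, contradicting $A_F\notin\I(X_F)$. Thus $A$ is not covered by any countable family of hyperplanes, so $\non(X)\le\lambda$.

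\emph{Main obstacle.} Everything except the upper bound for $\non$ is routine once the italicized observation and the identities $|\h(X)|=|X^*|$, $|X^*|^{\aleph_0}=|X^*|$ are in place. The delicate point is $\non(X)\le\cf([\dens(X)]^\omega)$ in the nonseparable case: a linearly dense set of size $\dens(X)$ may well lie in $\I(X)$ (e.g.\ $\{e_\alpha:\alpha<\omega_1\}\in\I(\ell^2(\omega_1))$), so one cannot reason with a dense set directly; the right move is to reduce to the separable theorem inside a ``large enough'' separable subspace $X_F$, and the lemma that makes ``large enough'' uniform over \emph{all} countable families of hyperplanes simultaneously is precisely the non-cofinality of $\{S:f|_{X_S}=0\}$ for $f\ne 0$.
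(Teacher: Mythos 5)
Your proposal is correct and follows essentially the same route as the paper: the same four-way decomposition, the same key observation that a hyperplane contained in a countable union of hyperplanes must equal one of them (the paper's Lemma~\ref{inclusion}), the identities $|\h(X)|=|X^*|$ and $|X^*|^{\aleph_0}=|X^*|$, and the same construction for $\non(X)\le\cf([\dens(X)]^\omega)$ via Klee-type sets $A_S$ inside separable subspaces $X_S$ indexed by a cofinal family in $[\kappa]^\omega$ (Propositions~\ref{basic-add}, \ref{basic-cof}, \ref{basic-non}, \ref{basic-cov}). Your treatment is if anything slightly more careful than the paper's at the step where one must find a single $F$ working simultaneously for all the functionals $f_n$.
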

	\begin{proof} Propositions \ref{basic-add}, \ref{basic-cof}, \ref{basic-non}, \ref{basic-cov}.
	\end{proof}
	So the interesting cardinal characteristics are $\cov$ and $\non$.
	First we note that making additional (but diverse)
	set theoretic assumptions (which are known to be undecidable) 
	the values of $\cov$ and $\non$ are 
	completely determined by the density of the space or even fixed. For $\non$ this follows just from
	results on cardinal arithmetic and Theorem \ref{main-nonseparable}:
	\begin{theorem}\label{main-consistency} Assume the Generalized Continuum Hypothesis {\sf GCH} or Martin's Maximum {\sf MM}. Let $X$ be a nonseparable Banach space. Then
		\begin{enumerate}
			\item $\mathfrak{cov}(X)=\omega_1$,
			\item $\mathfrak{non}(X)=\dens(X)$ \ \ if $\cf(\dens(X))>\omega$,
			\item $\mathfrak{non}(X)=\dens(X)^+$ \ \ if $\cf(\dens(X))=\omega$.
		\end{enumerate}
		Moreover the same is consistent with any possible size of the continuum $\mathfrak c$.
		If  violations of the above equalities concerning $\non$ are consistent, then so
		is the existence of a measurable cardinal.
	\end{theorem}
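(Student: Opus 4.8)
The plan is to peel off the ``arithmetic'' part of the statement from its combinatorial core. Items (2), (3) and item (1) under {\sf GCH} are consequences of Theorem~\ref{main-nonseparable} together with cardinal arithmetic; item (1) under {\sf MM} is the substantive assertion and is where I expect essentially all the difficulty to lie.

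Item (1) under {\sf GCH} is immediate: Theorem~\ref{main-nonseparable} gives $\omega_1\le\cov(X)\le\mathfrak c$, and {\sf GCH} gives $\mathfrak c=\omega_1$. For (2) and (3) I would first record a {\sf ZFC} sharpening of the inequality $\dens(X)\le\non(X)$: \emph{if $\lambda:=\dens(X)$ has cofinality $\omega$, then every $A\subseteq X$ with $|A|\le\lambda$ lies in $\I(X)$}. Indeed, split $A$ into countably many subsets $A_n$ each of cardinality $<\lambda$; then $\overline{\lin}(A_n)$ has density $<\lambda=\dens(X)$, hence is a proper closed subspace of $X$, hence is contained in a hyperplane by Hahn--Banach, so $A_n\in\I(X)$; since $\I(X)$ is a $\sigma$-ideal, $A\in\I(X)$. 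In particular $\non(X)\ge\lambda^+$ when $\cf(\lambda)=\omega$. Next, both hypotheses yield the bounds $\cf([\kappa]^\omega)\le\kappa$ when $\cf(\kappa)>\omega$ and $\cf([\kappa]^\omega)\le\kappa^+$ when $\cf(\kappa)=\omega$: under {\sf GCH} this is the Hausdorff formula for $\kappa^\omega$, and since {\sf MM} implies {\sf SCH} and $\mathfrak c=\aleph_2$, under {\sf MM} it follows by induction on $\kappa$ (for $\kappa$ of uncountable cofinality every countable subset of $\kappa$ is bounded, so $\cf([\kappa]^\omega)=\max(\kappa,\sup_{\mu<\kappa}\cf([\mu]^\omega))$, and the supremum is $\le\kappa$ by the inductive hypothesis and {\sf SCH}; for $\cf(\kappa)=\omega$ one uses instead $\kappa^\omega=\kappa^+$, valid since $\mathfrak c<\aleph_\omega$). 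Now (2) and (3) follow by sandwiching with Theorem~\ref{main-nonseparable}: if $\cf(\dens(X))>\omega$ then $\dens(X)\le\non(X)\le\cf([\dens(X)]^\omega)\le\dens(X)$, and if $\cf(\dens(X))=\omega$ then $\dens(X)^+\le\non(X)\le\cf([\dens(X)]^\omega)\le\dens(X)^+$.

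The heart of the matter is (1) under {\sf MM}, where Theorem~\ref{main-nonseparable} only yields $\cov(X)\le\mathfrak c=\aleph_2$ and one must actually produce $\omega_1$ hyperplanes covering $X$. I would first isolate a {\sf ZFC} class where this is routine: if there is a family $\{f_\alpha:\alpha<\omega_1\}\subseteq X^*\setminus\{0\}$ such that every $x\in X$ has countable support $\{\alpha:f_\alpha(x)\neq0\}$, then each $x$ is annihilated by some $f_\alpha$, so $X=\bigcup_{\alpha<\omega_1}\ker f_\alpha$; this already settles (1) in {\sf ZFC} for weakly Lindel\"of determined spaces, spaces with a projectional resolution of the identity, $C(K)$ for $K$ Corson or Valdivia compact, and for concrete spaces such as $c_0(\Gamma)$, $\ell^p(\Gamma)$ or $C[0,\omega_1]$ one can even write down the hyperplanes explicitly. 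For the remaining spaces the plan is to use {\sf MM} proper: by an elementary-submodel/reflection argument (this is where {\sf MM}'s strong reflection is invoked) reduce to the case $\dens(X)=\omega_1$, and then appeal to {\sf MA}$_{\omega_1}$, exploiting that a Banach space of density $\omega_1$ which cannot be covered by $\omega_1$ hyperplanes should give rise (via a $C(K)$-representation or a suitable biorthogonal system) to a nonmetrizable compact space with small diagonal, which {\sf MA}$_{\omega_1}$-type combinatorics rule out. Making this last reduction precise --- in particular pinning down the exact link with the small-diagonal problem mentioned in the introduction --- is what I expect to be the main obstacle.

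It remains to address the two ``moreover'' clauses. For consistency with any prescribed value of $\mathfrak c$, one forces over a model of {\sf GCH} a model in which $\mathfrak c$ is as large as desired, every nonseparable Banach space is covered by $\omega_1$ of its hyperplanes, and every subset of $X$ of cardinality $<\cf([\dens(X)]^\omega)$ belongs to $\I(X)$, while keeping the values of $\cf([\kappa]^\omega)$ unchanged (for instance by using ccc posets in the relevant part of the iteration, so that cofinal families of $[\kappa]^\omega$ from the ground model survive); in such a model (1) holds by construction and (2)--(3) follow exactly as above. Finally, suppose one of the $\non$-equalities fails for some $X$. Since the lower bounds $\non(X)\ge\dens(X)$ (for $\cf(\dens(X))>\omega$) and $\non(X)\ge\dens(X)^+$ (for $\cf(\dens(X))=\omega$) are theorems of {\sf ZFC} by the sharpening above, the upper bound $\non(X)\le\cf([\dens(X)]^\omega)$ of Theorem~\ref{main-nonseparable} forces $\cf([\lambda]^\omega)>\lambda^+$ for some cardinal $\lambda$ of cofinality $\omega$ --- in the case $\cf(\dens(X))>\omega$ one first passes, using the identity $\cf([\mu]^\omega)=\max(\mu,\sup_{\nu<\mu}\cf([\nu]^\omega))$ for $\mu$ of uncountable cofinality, to the least cardinal $\lambda$ with $\cf([\lambda]^\omega)>\lambda^+$, which necessarily has cofinality $\omega$. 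A model with $\cf([\lambda]^\omega)>\lambda^+$ for some $\lambda$ of countable cofinality is of large-cardinal strength: by the core-model covering lemma its existence entails the existence of an inner model with a measurable cardinal, hence the consistency of a measurable cardinal.
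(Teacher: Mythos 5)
Your treatment of items (2) and (3), of item (1) under {\sf GCH}, and of the measurable-cardinal remark is correct and essentially the paper's route: the lower bounds $\non(X)\ge\dens(X)$ (resp.\ $\ge\dens(X)^+$ when $\cf(\dens(X))=\omega$) and the upper bound $\non(X)\le\cf([\dens(X)]^\omega)$ are the paper's Proposition~\ref{basic-non}, the cardinal arithmetic under {\sf GCH}/{\sf MM} is Proposition~\ref{non-gch-mm}, and the reduction of a failure of the $\non$-equalities to $\cf([\lambda]^\omega)>\lambda^+$ and thence to an inner model with a measurable via the covering lemma is Proposition~\ref{measurable}.

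The genuine gap is item (1) under {\sf MM}, which you correctly identify as the heart of the matter but do not prove. Your plan --- reflect down to $\dens(X)=\omega_1$ by elementary submodels and then invoke {\sf MA}$_{\omega_1}$ --- cannot work as stated: a Banach space of density $\omega_1$ is \emph{always} (in {\sf ZFC}) a union of $\omega_1$ proper closed subspaces, hence of $\omega_1$ hyperplanes (Proposition~\ref{basic-cov}), so if such a reflection were available the statement would be a {\sf ZFC} theorem, contradicting its status as the paper's main open Question~\ref{main-question}(1); and your auxiliary claim that ``{\sf MA}$_{\omega_1}$-type combinatorics rule out'' nonmetrizable compacta with small diagonal is not a known theorem. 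The paper's actual argument is different and direct: for nonseparable $X$ the dual ball $B_{X^*}$ is weak$^*$-nonmetrizable; {\sf MM} implies {\sf PFA}, and by Dow--Pavlov under {\sf PFA} every compact Hausdorff space with small diagonal is metrizable, so $B_{X^*}$ fails to have small diagonal; Lemma~\ref{dualball-diagonal} then converts an uncountable subset of $B_{X^*}^2\setminus\Delta$ witnessing this failure into $\omega_1$ functionals whose kernels cover $X$. The same gap recurs in your ``any size of $\mathfrak c$'' clause for $\cov$: you assert that (1) ``holds by construction'' in a ccc extension of a {\sf GCH} model, but this is precisely the nontrivial point; the paper needs Dow's theorem that in Cohen extensions countably tight compacta with small diagonal are metrizable, combined with Juh\'asz--Szentmikl\'ossy (small diagonal implies countable tightness), before Lemma~\ref{dualball-diagonal} can be applied.
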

	\begin{proof}
		Propositions \ref{basic-cov}, \ref{con-cov},  \ref{non-gch-mm}, \ref{non-anyc},
		\ref{measurable} and the fact that 
		{\sf MM} implies {\sf PFA}.
	\end{proof}
	Not only consistent set-theoretic hypotheses determine the values of $\cov$.
	Also 
	a well-known topological statement  which is unknown to be provable but known to be consistent 
	fixes the value of $\cov$.
	
	\begin{theorem}\label{main-diagonal} Assume that all compact Hausdorff spaces with small diagonal are metrizable.
		Let $X$ be a nonseparable Banach space. Then
		$$\mathfrak{cov}(X)=\omega_1.$$
	\end{theorem}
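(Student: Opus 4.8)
The plan is to prove the only nontrivial inequality $\cov(X)\le\omega_1$; combined with $\cov(X)\ge\omega_1$ from Theorem~\ref{main-nonseparable}, this gives the asserted equality. The compact space to work with is the weak$^*$ dual ball $K:=(B_{X^*},w^*)$, which is compact Hausdorff, and which is \emph{not} metrizable because $X$ is nonseparable (metrizability of the weak$^*$ topology on $B_{X^*}$ is equivalent to separability of $X$). By the hypothesis of the theorem, $K$ therefore does not have small diagonal, so one can fix an uncountable family $E=\{(f_\alpha,g_\alpha):\alpha<\omega_1\}\subseteq K\times K$ with $f_\alpha\ne g_\alpha$ for every $\alpha$ and with the property that $\overline{F}\cap\Delta_K\ne\emptyset$ for every uncountable $F\subseteq E$, where $\Delta_K$ is the diagonal of $K\times K$.

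I would then show that the $\omega_1$ hyperplanes $H_\alpha:=\ker(f_\alpha-g_\alpha)$ (note $f_\alpha-g_\alpha\in X^*\setminus\{0\}$) cover $X$, which at once gives $\cov(X)\le\omega_1$. Suppose not, and pick $x\in X$ belonging to no $H_\alpha$, that is, $f_\alpha(x)\ne g_\alpha(x)$ for all $\alpha<\omega_1$. The evaluation map $\widehat{x}\colon K\to\R$, $\widehat{x}(f)=f(x)$, is weak$^*$-continuous, hence $\widehat{x}\times\widehat{x}\colon K\times K\to\R^2$ is continuous and carries each $(f_\alpha,g_\alpha)$ off the diagonal of $\R^2$. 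Splitting $\omega_1$ according to whether $|f_\alpha(x)-g_\alpha(x)|\ge 1/n$ yields an uncountable $W\subseteq\omega_1$ and some $\varepsilon>0$ with $|f_\alpha(x)-g_\alpha(x)|\ge\varepsilon$ for all $\alpha\in W$. Then $F:=\{(f_\alpha,g_\alpha):\alpha\in W\}$ is mapped by $\widehat{x}\times\widehat{x}$ into the closed set $C_\varepsilon:=\{(s,t)\in\R^2:|s-t|\ge\varepsilon\}$; by continuity $(\widehat{x}\times\widehat{x})(\overline{F})\subseteq C_\varepsilon$, so every $(f,g)\in\overline{F}$ satisfies $|f(x)-g(x)|\ge\varepsilon>0$, hence $f\ne g$ and $\overline{F}\cap\Delta_K=\emptyset$. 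This contradicts the choice of $E$, so no such $x$ exists and $X=\bigcup_{\alpha<\omega_1}H_\alpha$.

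I expect no genuine obstacle here: the substance of the theorem is the realization that a point not covered by the hyperplanes $\ker(f_\alpha-g_\alpha)$ gives, through evaluation, an uncountable subset of $E$ whose closure misses $\Delta_K$, which is exactly the small-diagonal behaviour $E$ was chosen to exclude. The two points deserving a little care are the standard equivalence between nonseparability of $X$ and nonmetrizability of $(B_{X^*},w^*)$, and the (routine) verification that uniform separation of the images from the diagonal of $\R^2$ forces the closure in $K\times K$ to miss $\Delta_K$ — this is where continuity of $\widehat{x}\times\widehat{x}$ together with closedness of $C_\varepsilon$ is used.
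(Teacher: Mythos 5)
Your proof is correct and follows essentially the same route as the paper: the paper deduces the theorem from Lemma \ref{dualball-diagonal}, whose proof likewise passes from nonmetrizability of $(B_{X^*},w^*)$ to the failure of small diagonal, takes a witnessing uncountable family of off-diagonal pairs, and uses the evaluation functionals $\delta_x$ together with the uniform-separation/pigeonhole argument (there packaged as the contrapositive of Lemma \ref{small-diagonal}(2)$\Rightarrow$(3)) to show the kernels of the differences cover $X$. You have merely inlined those lemmas; no gap.
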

	\begin{proof} Lemma \ref{dualball-diagonal}.
	\end{proof}
	For the definition of a space with small diagonal see Definition \ref{def-small-diagonal}. In fact a weaker natural topological hypothesis  has the same impact on $\cov$ 
	(see Question \ref{compact-omega1}). The following main questions remain open:
	
	\begin{question}\label{main-question} Can one prove in ZFC any of  the following sentences?
		\begin{enumerate}
			\item Every nonseparable Banach space can be covered by $\omega_1$ of its hyperplanes.
			\item In any Banach space $X$ of dimension bigger than $1$ each subset of cardinality smaller than
			$\cf([\dens(X)]^\omega)$ can be covered by countably many hyperplanes.
		\end{enumerate}
	\end{question}
	The positive answer to the above questions would settle the values of $\cov$ an $\non$ in ZFC
	as in Theorem \ref{main-consistency}.
	Note that by Theorem \ref{main-nonseparable} (3) in every infinite dimensional Banach space $X$ there
	is a subset of cardinality $\cf([\dens(X)]^\omega)$ which cannot be covered by countably many hyperplanes.
	Attempting to prove the sentences of Question \ref{main-question} for all nonseparable Banach spaces  we manage to prove them
	in many cases:
	
	\begin{theorem}\label{main-cov} Suppose that $X$ is any nonseparable Banach space belonging to one
		of the following classes:
		\begin{enumerate}
			\item $X$ admits a fundamental biorthogonal system,
			\item $X$ is of the form $C(K)$  for $K$ scattered, Hausdorff compact,
			\item $X$ contains an isomorphic copy of $\ell_1(\omega_1)$,
			\item The dual ball $B_{X^*}$ of $X^*$ has uncountable tightness in the weak$^*$ topology.
		\end{enumerate}
		Then  $X$
		can be covered by $\omega_1$ hyperplanes, i.e., $\mathfrak{cov}(X)=\omega_1$.  
	\end{theorem}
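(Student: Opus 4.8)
In each of the four cases the goal is the same: to exhibit a family $\{f_\alpha:\alpha<\omega_1\}\subseteq X^*\setminus\{0\}$ with $X=\bigcup_{\alpha<\omega_1}\ker f_\alpha$ (a nonzero functional has a hyperplane as kernel, and $\cov(X)\ge\omega_1$ always holds by Theorem~\ref{main-nonseparable}, so only the upper bound is at issue). My plan is to prove (1) directly by an elementary argument, to reduce (2) to (1), to reduce (3) to (4), and to isolate (4) as the main lemma, which I expect to be the real work.

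For (1), fix a fundamental biorthogonal system $\{(x_\gamma,f_\gamma):\gamma\in\Gamma\}$ with $|\Gamma|\ge\omega_1$ and a subset $\Gamma_0\subseteq\Gamma$ of size $\omega_1$. The key point is that for every $x\in X$ the set $\{\gamma\in\Gamma:f_\gamma(x)\ne 0\}$ is countable: since the system is fundamental, $x=\lim_n\sum_{\gamma\in F_n}c^n_\gamma x_\gamma$ for suitable finite sets $F_n$, and by continuity and biorthogonality $f_\gamma(x)=\lim_n c^n_\gamma=0$ for every $\gamma\notin\bigcup_nF_n$, a countable set. Hence every $x$ lies in $\ker f_\gamma$ for some $\gamma\in\Gamma_0$, i.e.\ $X=\bigcup_{\gamma\in\Gamma_0}\ker f_\gamma$. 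For (2), I will invoke the classical fact that for a scattered compact $K$ the space $C(K)$ admits a fundamental biorthogonal system of size $w(K)=\dens(C(K))$; since $K$ is nonmetrizable, $w(K)\ge\omega_1$, so (1) applies.

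For (3), let $Y\subseteq X$ be a closed subspace isomorphic to $\ell_1(\omega_1)$. The restriction map $R\colon X^*\to Y^*$ is weak$^*$-to-weak$^*$ continuous and, by Hahn--Banach, maps $B_{X^*}$ onto $B_{Y^*}$; so $(B_{X^*},w^*)$ maps continuously onto $(B_{Y^*},w^*)$. The latter is weak$^*$-homeomorphic, via the adjoint of the isomorphism, to a weak$^*$-compact convex symmetric set in $\ell_\infty(\omega_1)$ trapped between two multiples of $B_{\ell_\infty(\omega_1)}$, and such a set has uncountable weak$^*$ tightness: after rescaling one checks that the point $\mathbf 1$ lies in the weak$^*$-closure of $\{\mathbf 1_F:F\in[\omega_1]^{<\omega}\}$ but not in the weak$^*$-closure of any countable subfamily (evaluate at the coordinate functional at an ordinal outside the countable union of the finite sets occurring in the subfamily). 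Finally, uncountable tightness passes backwards along continuous surjections of compacta: if $q\colon L\twoheadrightarrow M$ is such a map and $\langle y_\alpha:\alpha<\omega_1\rangle$ is a free sequence in $M$, then choosing $z_\alpha\in q^{-1}(y_\alpha)$ one has $q\big(\overline{\{z_\beta:\beta<\alpha\}}\big)=\overline{\{y_\beta:\beta<\alpha\}}$ and likewise for the tails, so the initial and final closures of $\langle z_\alpha\rangle$ have disjoint $q$-images, hence are themselves disjoint, and $\langle z_\alpha\rangle$ is a free sequence in $L$; by Arhangel'skii's theorem this yields uncountable tightness of $L$. Applying this with $q=R$ shows $(B_{X^*},w^*)$ has uncountable tightness, so (3) reduces to (4).

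It remains to prove (4), and this is where I expect the main obstacle to lie. From uncountable tightness one obtains, again by Arhangel'skii's theorem, a free sequence $\langle f_\alpha:\alpha<\omega_1\rangle$ in $B_{X^*}$, and the task is to convert it into $\omega_1$ covering hyperplanes. The natural attempt---using kernels of differences $f_\alpha-\mu_\alpha$ for suitable weak$^*$ accumulation points $\mu_\alpha$ of initial segments---runs into the difficulty that a free sequence only guarantees, for a fixed $x\in X$, that the values $f_\alpha(x)$ \emph{approach} a common value, not that any of them attains it, so the naive family need not cover $X$. The way around this should be to use that the compactum in question is the \emph{convex} set $B_{X^*}$: by separating, for suitably many $\alpha$, the point $f_\alpha$ from the closed convex hull of $\{f_\beta:\beta<\alpha\}$ by points of $X$, one builds a long ``triangular'' configuration and then, by a recursion that blocks potential uncovered points at each stage, arranges genuine vanishing. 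This is the content of Lemma~\ref{dualball-diagonal}, which is also the source of the link with spaces of small diagonal exploited in Theorem~\ref{main-diagonal} and of the weaker hypothesis appearing in Question~\ref{compact-omega1}.
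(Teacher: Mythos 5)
Your case (1) is correct and is essentially the paper's own argument (Proposition \ref{fund-biort} plus Lemma \ref{equiv-cov-omega1}): each $x$ has countable ``support'' $\{\gamma: f_\gamma(x)\neq 0\}$, so the kernels of any $\omega_1$ of the coordinate functionals cover $X$. Your reduction of (3) to (4) is also sound and slightly more self-contained than the paper's, which instead cites Talagrand's theorem to get a continuous surjection of $(B_{X^*},w^*)$ onto $[0,1]^{\omega_1}$; both routes rest on the preservation of countable tightness under continuous maps of compacta. The genuine gaps are in (2) and (4). For (2), the ``classical fact'' you invoke is false: a fundamental biorthogonal system in a nonseparable space is necessarily uncountable, and there are nonmetrizable scattered compacta $K$ (Kunen's space under {\sf CH}, and the space of \cite{christina-vtt}) for which $C(K)$ has no uncountable biorthogonal system whatsoever --- indeed Section \ref{final} of the paper discusses exactly these spaces as instances of clause (2) that do \emph{not} fall under clause (1). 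So (2) cannot be reduced to (1). The paper's actual argument (Proposition \ref{scattered}) is different: a continuous real-valued function on a scattered compactum has countable range, hence is non-injective on any $A\subseteq K$ of cardinality $\omega_1$; identifying $A$ with $\{\delta_x:x\in A\}\subseteq B_{C(K)^*}$, every $f\in C(K)$ is annihilated by some difference $\delta_x-\delta_y$ with $\{x,y\}\in[A]^2$, and these $\omega_1$ nonzero functionals provide the covering hyperplanes (implication (3)$\Rightarrow$(4) of Lemma \ref{dualball-diagonal}).

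For (4) you correctly diagnose the obstacle --- a free sequence only forces the values $f_\alpha(x)$ to cluster, not to vanish --- but you do not overcome it: you defer to Lemma \ref{dualball-diagonal} without proving it, and your sketch of a ``triangular configuration plus blocking recursion'' is not how that lemma is actually established. The real chain is: if $B_{X^*}$ is not countably tight in the weak$^*$ topology, then by the Juh\'asz--Szentmikl\'ossy theorem (implication (3)$\Rightarrow$(4) of Lemma \ref{small-diagonal}, a genuinely nontrivial result) it does not have a small diagonal; unwinding the negation of small diagonal through the contrapositive of (2)$\Rightarrow$(3) of Lemma \ref{small-diagonal} yields an uncountable family of pairs $(y^*_\alpha,z^*_\alpha)$ off the diagonal such that every weak$^*$-continuous function on $B_{X^*}$ --- in particular every $\delta_x$ for $x\in X$ --- separates only countably many of them; hence the nonzero functionals $y^*_\alpha-z^*_\alpha$ have the property that each $x\in X$ lies in the kernel of all but countably many of them, and their kernels cover $X$. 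Without this ingredient, case (4), and with it your case (3), remains unproved.
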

	\begin{proof}
		Propositions \ref{fund-biort}, \ref{scattered},
		\ref{ell1-inside},  Lemmas \ref{small-diagonal}, \ref{dualball-diagonal}.
	\end{proof}
	Note that this implies that spaces like $c_0(\kappa)$, $\ell_p(\kappa)$ for $1\leq p<\infty$, 
	and any $\kappa>1$,
	reflexive spaces, WLD spaces (by (1)), $\ell_\infty(\kappa)$, $L_\infty(\{0,1\}^\kappa)$ for any $\kappa>1$,
	(by (3)) satisfy the conclusion of the above theorem.
	
	\begin{theorem}\label{main-non} Suppose that $X$ is any  Banach space of
		dimension bigger than $1$ belonging to one
		of the following classes:
		\begin{enumerate}
			\item $X$ admits a fundamental biorthogonal system,
			\item $X$ has density $\omega_n$ for some $n\in \N$.
		\end{enumerate} 
		Then
		each subset of $X$ of cardinality smaller than
		$\cf([\dens(X)]^\omega)$ can be covered by countably many hyperplanes, i.e.,
		$\non(X)=\cf([\dens(X)]^\omega)$.
	\end{theorem}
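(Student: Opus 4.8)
The plan is to prove, for $X$ in either class, that every $A\subseteq X$ with $|A|<\cf([\dens(X)]^\omega)$ belongs to $\I(X)$; since the inequality $\non(X)\le\cf([\dens(X)]^\omega)$ is already Theorem~\ref{main-nonseparable}, this yields the stated equality. For separable $X$ the conclusion is precisely Theorem~\ref{main-separable}, so I would assume $X$ nonseparable throughout and write $\kappa=\dens(X)\ge\omega_1$; then $([\kappa]^\omega,\subseteq)$ has no largest element, so $\cf([\kappa]^\omega)\ge\omega_1$ is a genuine uncountable cardinal.

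\emph{Class (1).} Fix a fundamental biorthogonal system $\{(x_\alpha,x_\alpha^*):\alpha<\kappa\}$ of $X$, where $\kappa=\dens(X)$ (the cardinality of a fundamental biorthogonal system is always the density; only $\ge$, which is immediate from linear density, is essential). The key first step is: for every $x\in X$ the set $S(x):=\{\alpha<\kappa:x_\alpha^*(x)\ne0\}$ is countable. Indeed, $x$ is a norm-limit of a sequence of vectors, each lying in the linear span of finitely many $x_\alpha$'s, so $x\in\overline{\mathrm{span}}\{x_\alpha:\alpha\in F\}$ for some countable $F\subseteq\kappa$; and for $\beta\notin F$ the functional $x_\beta^*$ kills every $x_\alpha$ with $\alpha\in F$, hence kills $\overline{\mathrm{span}}\{x_\alpha:\alpha\in F\}$, so $x_\beta^*(x)=0$; thus $S(x)\subseteq F$. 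Now let $A\subseteq X$ with $|A|<\cf([\kappa]^\omega)$, and replace each $S(x)$ by a countably infinite $\widetilde S(x):=S(x)\cup\omega\in[\kappa]^\omega$. The family $\{\widetilde S(x):x\in A\}$ has cardinality below $\cf([\kappa]^\omega)$, hence is not cofinal in $([\kappa]^\omega,\subseteq)$: there is $T\in[\kappa]^\omega$ with $T\not\subseteq\widetilde S(x)$ for all $x\in A$. For each $x\in A$ choose $\gamma_x\in T\setminus\widetilde S(x)$; then $\gamma_x\notin S(x)$, i.e.\ $x_{\gamma_x}^*(x)=0$, while $x_{\gamma_x}^*(x_{\gamma_x})=1$ shows $x_{\gamma_x}^*\ne0$, so $\ker x_{\gamma_x}^*$ is a hyperplane. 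Hence $A\subseteq\bigcup_{\gamma\in T}\ker x_\gamma^*$ is covered by countably many hyperplanes, so $A\in\I(X)$, and $\non(X)\ge\cf([\kappa]^\omega)=\cf([\dens(X)]^\omega)$.

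\emph{Class (2).} Here $\dens(X)=\omega_n$ with $n\ge1$, and I would invoke the classical fact $\cf([\omega_n]^{\aleph_0})=\omega_n$, proved by induction on $n$: for $n=1$ every countable subset of $\omega_1$ is bounded, so its initial segments are cofinal, of size $\omega_1$, and no countable family can be cofinal since its union misses a point of $\omega_1$; for the step one transports a cofinal family in $[\omega_n]^{\aleph_0}$ through fixed bijections between $\omega_n$ and the initial segments of $\omega_{n+1}$ of size $\le\omega_n$ to get a cofinal family in $[\omega_{n+1}]^{\aleph_0}$ of size $\omega_{n+1}$, while a smaller family would again miss a point. Then Theorem~\ref{main-nonseparable} gives $\omega_n=\dens(X)\le\non(X)\le\cf([\omega_n]^{\aleph_0})=\omega_n$, so $\non(X)=\omega_n=\cf([\dens(X)]^\omega)$; in particular every $A\subseteq X$ of cardinality $<\omega_n$ lies in $\I(X)$. (The case $n=0$, if admitted, is Theorem~\ref{main-separable}.)

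The only genuinely substantive point is the countable-support lemma in Class~(1), which reduces the covering problem to the combinatorics of $([\dens(X)]^\omega,\subseteq)$; everything else is the definition of cofinality (a family of size below $\cf$ is not cofinal) together with elementary cardinal arithmetic. The points to watch are that the auxiliary sets $\widetilde S(x)$ must be padded to be genuinely infinite so that they lie in $[\dens(X)]^\omega$, and that each $\ker x_\gamma^*$ is indeed a closed one-codimensional subspace.
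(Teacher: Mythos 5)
Your proof is correct and follows essentially the same route as the paper: the countable-support property of a fundamental biorthogonal system reduces class (1) to the non-cofinality of a family of fewer than $\cf([\dens(X)]^\omega)$ countable subsets of $\dens(X)$ (this is exactly the content of Propositions \ref{kappa-functionals} and \ref{fund-non}), and class (2) is the inductive computation $\cf([\omega_n]^\omega)=\omega_n$ sandwiched between the general bounds of Proposition \ref{basic-non} (the paper's Proposition \ref{non-omegan}). The only cosmetic differences are your padding of the support sets to be genuinely infinite and your explicit handling of the degenerate case $n=0$, both of which the paper glosses over.
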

	\begin{proof}
		Propositions \ref{non-omegan} and \ref{fund-non}.
	\end{proof}
	Note that (1) above implies that spaces like $c_0(\kappa)$, $\ell_p(\kappa)$ for $1\leq p<\infty$, 
	spaces $\ell_\infty(\kappa)$, $L_\infty(\{0,1\}^\kappa)$ 
	for any $\kappa>1$ (by a result of \cite{davis} since
	$\ell_2(dens(X))$ is a quotient of such spaces $X$),
	reflexive spaces, WLD spaces 
	satisfy the conclusion of the above theorem.
	Note that a Banach space $X$ of density $\omega_n$ for $n\in \N$ may have
	cardinality arbitrarily bigger than $\omega_n$ as
	$|X|=\dens(X)^\omega=\omega_n^\omega=\mathfrak c\cdot\omega_n$ by Proposition \ref{card-banach}.
	
	Let us also note one application of our results. Recall that a subset $Y$ of a Banach space
	$X$ is overcomplete (\cite{russo}, \cite{over}) if $|Y|=\dens(X)$ and every subset $Z\subseteq Y$ of cardinality $\dens(X)$  is linearly dense in $X$. The following constitutes a progress on Question 39 from \cite{over}.

	\begin{theorem} Assume the Proper Forcing Axiom {\sf PFA}. Let $X$ be a Banach
		space such that $\cf(\dens(X))>\omega_1$. Then $X$ does not admit an overcomplete set.
		Moreover this statement is consistent with any possible size of the continuum $\mathfrak c$.
	\end{theorem}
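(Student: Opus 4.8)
The plan is to deduce everything from the relation $\cov(X)=\omega_1$. Since $\cf(\dens(X))>\omega_1$ forces $\dens(X)\geq\omega_2$, in particular $X$ is nonseparable, and so {\sf PFA} gives $\cov(X)=\omega_1$ by Proposition~\ref{con-cov}; the first assertion will then follow from an elementary counting argument, and the consistency clause from the corresponding clause of Theorem~\ref{main-consistency}.

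The only new ingredient is the following observation: \emph{if $Y\subseteq X$ is overcomplete then $|Y\cap H|<\dens(X)$ for every $H\in\h(X)$.} Indeed $|Y\cap H|\leq|Y|=\dens(X)$, so it suffices to rule out $|Y\cap H|=\dens(X)$; but in that case $Z:=Y\cap H$ would be a subset of $Y$ of cardinality $\dens(X)$, hence linearly dense in $X$, while $\overline{\lin(Z)}\subseteq H\subsetneq X$ because $H$ is a closed proper subspace of $X$ --- a contradiction.

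Now assume $\cf(\dens(X))>\omega_1$, set $\kappa=\dens(X)$, and suppose towards a contradiction that $Y\subseteq X$ is overcomplete, so $|Y|=\kappa$. Using $\cov(X)=\omega_1$, pick $(H_\alpha)_{\alpha<\omega_1}\subseteq\h(X)$ with $X=\bigcup_{\alpha<\omega_1}H_\alpha$; then $Y=\bigcup_{\alpha<\omega_1}(Y\cap H_\alpha)$ and $|Y\cap H_\alpha|<\kappa$ for each $\alpha$ by the observation. Since $\omega_1<\cf(\kappa)$ we get $\mu:=\sup_{\alpha<\omega_1}|Y\cap H_\alpha|<\kappa$, hence $|Y|\leq\omega_1\cdot\mu=\max(\omega_1,\mu)<\kappa$ as $\omega_1<\kappa$, contradicting $|Y|=\kappa$. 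Thus $X$ has no overcomplete set. For the moreover part, the same computation runs verbatim in the models provided by the last clause of Theorem~\ref{main-consistency}, in which $\cov(X)=\omega_1$ holds for every nonseparable $X$ while $\mathfrak c$ may be prescribed arbitrarily.

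The substance of the argument is entirely in the input $\cov(X)=\omega_1$ (Proposition~\ref{con-cov} and the associated forcing extensions); the overcomplete-set part is the one-line observation above together with trivial cardinal arithmetic. The only point requiring care is that, in the witnessing models for the consistency clause, $\cov(X)=\omega_1$ must be available for \emph{all} nonseparable $X$ at once, not merely for one fixed space; note also that the hypothesis $\cf(\dens(X))>\omega_1$ (rather than just $>\omega$) is precisely what lets us absorb an $\omega_1$-indexed union, and in general it cannot be weakened, as $\ell_2(\omega_1)$ does admit an overcomplete set.
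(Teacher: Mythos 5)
Your proof is correct and is essentially the paper's argument: the paper also derives $\cov(X)=\omega_1$ from {\sf PFA} (via Theorem \ref{main-consistency}/Proposition \ref{con-cov}) and then uses $\cf(\dens(X))>\omega_1$ to find a single hyperplane $H_\alpha$ with $|H_\alpha\cap Y|=\dens(X)$, which is just the contrapositive of your union-of-small-pieces computation. One caveat on your closing aside: the claim that $\ell_2(\omega_1)$ admits an overcomplete set is not a ZFC fact (it holds under {\sf CH} but is independent, and is in doubt precisely under {\sf PFA}), so it does not justify the assertion that the hypothesis $\cf(\dens(X))>\omega_1$ cannot be weakened; this does not affect the correctness of the main argument.
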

	\begin{proof}
		By Theorem \ref{main-consistency} the hypothesis implies that every nonseparable
		Banach space $X$ can be covered by $\omega_1$ many hyperplanes
		$\{H_\alpha: \alpha<\omega_1\}$. If $Y\subseteq X$ and $|Y|=\dens(X)$, then by $\cf(\dens(X))>\omega_1$ there is
		$\alpha<\omega_1$ such that $|H_\alpha\cap Y|=\dens(X)$, so $Z=H_\alpha\cap Y$ witnesses that
		$Y$ is not overcomplete.
	\end{proof}
	
	The structure of the paper is the following. Section 2 contains preliminaries. Section \ref{basics}
	establishes Theorems \ref{main-separable} and \ref{main-nonseparable}. Section \ref{cov} includes
	progress on Question \ref{main-question} (1) and arrives at Theorems \ref{main-consistency} (1),
	\ref{main-diagonal} and
	\ref{main-cov}. Section \ref{non} includes
	progress on Question \ref{main-question} (2) and arrives at Theorems \ref{main-consistency} (2), (3)
	and \ref{main-non}. The last Section \ref{final} discusses the perspectives for further research
	and states additional questions.
	
	No knowledge of logic or higher set-theory is required from the reader to follow the paper.
	This is because all consistency results are obtained by applying consistency results already
	present in the literature.
	
	\section{Preliminaries}
	
	\subsection{Notation and terminology}\label{terminology}
	We use notation common for set theory and the theory of Banach spaces. 
	
	$\N$ denotes the set of non-negative integers. $\Q$ and $\R$ denote the rationals and the reals respectively. $|A|$ denotes the cardinality of a set $A$. By $f|A$ we mean the restriction of a function $f$ to a set $A$. $\omega_n$ stands for the $n$-th infinite cardinal, $\omega_\omega$ is the smallest cardinal which is greater than $\omega_n$ for each $n\in \N$. $\mathfrak c$ denotes the cardinality of $\R$ and is called the continuum. If $\alpha$ is an ordinal number, then $\cf(\alpha)$ denotes its cofinality. $[A]^\omega$ is the set of countable subsets of $A$, $\cf([A]^\omega)$ denotes the cofinality of $[A]^\omega$ considered as the set partially ordered by inclusion, that is the minimal cardinality of a family of countable
	subsets of $A$ such that any countable subset of $A$ is included in an element of the family. 
	
	For a Banach space $X$, density $\dens(X)$ is the minimal cardinality of a dense subset in $X$ (in the norm topology). $X^*$ stands for the Banach space of bounded linear functional on $X$. For $S\subseteq X$ by $\lin(S)$ we denote the smallest linear subspace of $X$ containing $S$ and $\overline{\lin(S)}$ stands for its closure. $\ker (x^*)$ denotes the kernel of a functional $x^*\in X^*$. If $x_i\in X, x_i^*\in X^*$ for $i\in I$ are such that $x^*_i(x_j)=\delta_{i,j}$, then $(x_i,x^*_i)_{i\in I}$ is called a biorthogonal system. If moreover $X=\overline{\lin\{{x_i:i\in I}\}}$, then such a system is called fundamental. For the definition and various characterizations of WLD spaces see \cite{wld}.
	
	We assume that all considered topological spaces are Hausdorff. For a compact space $K$ we denote by $C(K)$ the Banach space of real continuous functions on $K$ with the supremum norm. If $x\in K$, then $\delta_x\in C(K)^*$ is defined by $\delta_x(f)=f(x)$. $\Delta(K)=\{(x,x): x\in K\}$ is the diagonal of $K\times K$. 
	
	For any set $A$ by $c_0(A)$ we denote the Banach space of functions $f:A\rightarrow \R$ such that for each $\varepsilon>0$ there is finitely many $a\in A$ with $|f(a)|>\varepsilon$ with the supremum norm. For $1\leq p< \infty $ by $\ell_p(A)$ we denote the Banach space of functions $f:A\rightarrow \R$ such that $\|f\|^p=\sum_{a\in A} |f(a)|^p<\infty$. $\ell_\infty(A)$ stands for the Banach space of bounded functions $f:A\rightarrow \R$ with the supremum norm. $\ell_\infty^c(A)$ is the subspace of $\ell_\infty(A)$ consisting of functions with countable support. We also write $c_0(\N)=c_0, \ell_p(\N)=\ell_p$ and $\ell_\infty(\N)=\ell_\infty$. 
	
	ZFC denotes Zermelo-Fraenkel set theory with the axiom of choice. We say that a sentence $\varphi $ is relatively consistent with a set of axioms if 
	its negation $\neg \varphi$ cannot be proven from those axioms unless assuming ZFC leads to contradiction.
	We usually skip the word ``relatively". A sentence $\varphi$ is undecidable if both $\varphi, \neg \varphi$ are  consistent. {\sf CH} means `$\mathfrak c= \omega_1$'. {\sf MM} stands for Martin's Maximum and {\sf PFA} for Proper Forcing Axiom. It is known that {\sf MM} implies {\sf PFA} and {\sf PFA} implies $\mathfrak c=\omega_2$ (for the definitions of {\sf MM} and {\sf PFA} and proofs of mentioned facts check \cite{jech}).

	\subsection{Hyperplanes}
	Let us recall here some elementary and well-known facts concerning hyperplanes in Banach spaces.

	\begin{lemma}\label{inclusion} Suppose that $X$ is a Banach space. Then the following hold.
		\begin{enumerate}
			\item If  $H, G$ are hyperplanes of $X$ and $H\subseteq G$,
			then $H=G$.
			\item If a hyperplane $H$ is contained in a countable union $\bigcup_{i\in \N} H_i$ of hyperplanes $H_i$, 
			then $H=H_i$ for some $i\in \N$.
		\end{enumerate}
	\end{lemma}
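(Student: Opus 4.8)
The plan is to prove (1) by a routine linear-algebra argument about kernels of functionals, and then to deduce (2) from (1) by applying the Baire category theorem \emph{inside} the hyperplane $H$ viewed as a Banach space in its own right.

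For (1), I would first recall that every hyperplane of $X$ is the kernel of a nonzero bounded linear functional: if $H$ is a one-codimensional closed subspace and $x_0 \notin H$, then each $x \in X$ has a unique representation $x = h + t x_0$ with $h \in H$ and $t \in \R$, and the assignment $x \mapsto t$ is a linear functional with kernel $H$; it is continuous precisely because $H$ is closed (equivalently, because $X/H$ is a one-dimensional normed space, on which every linear map is bounded). So write $H = \ker(f)$ and $G = \ker(g)$ with $f, g \in X^* \setminus \{0\}$. Assume $H \subseteq G$, i.e.\ $\ker(f) \subseteq \ker(g)$, and pick $x_0$ with $f(x_0) = 1$. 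For every $x \in X$ the vector $x - f(x) x_0$ lies in $\ker(f) \subseteq \ker(g)$, so $g(x) = f(x)\, g(x_0)$; that is, $g = g(x_0)\, f$. Since $g \neq 0$ we get $g(x_0) \neq 0$, hence $\ker(g) = \ker(f)$, i.e.\ $G = H$.

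For (2), suppose toward a contradiction that $H \subseteq \bigcup_{i \in \N} H_i$ while $H \neq H_i$ for every $i$. By part (1) applied to the pair $H$, $H_i$ (note that $H \subseteq H_i$ would force $H = H_i$), we have $H \not\subseteq H_i$, so $H \cap H_i$ is a proper closed linear subspace of $H$. A proper closed subspace of a normed space has empty interior — an open ball contained in a subspace, after translating to the origin and rescaling, would force the subspace to equal the whole space — so each $H \cap H_i$ is nowhere dense in $H$. But $H$, being a closed subspace of the Banach space $X$, is itself a Banach space, hence a Baire space; therefore it cannot equal the countable union $\bigcup_{i \in \N} (H \cap H_i)$ of nowhere dense sets. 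This contradicts the equality $H = \bigcup_{i \in \N} (H \cap H_i)$, which holds because $H \subseteq \bigcup_{i \in \N} H_i$. Hence $H = H_i$ for some $i$.

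The argument is entirely elementary, so there is no real obstacle; the only points worth stating explicitly are that a hyperplane is automatically complete (so that Baire's theorem applies to it) and the standard fact that kernels of proportional nonzero functionals coincide.
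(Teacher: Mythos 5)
Your proposal is correct and follows essentially the same route as the paper: part (1) via the identification of hyperplanes with kernels of nonzero bounded functionals (you spell out the proportionality argument that the paper cites from Semadeni), and part (2) via the Baire category theorem applied inside $H$, noting that each $H\cap H_i$ is a proper closed, hence nowhere dense, subspace of $H$. No gaps.
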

	\begin{proof} (1) Every hyperplane in a Banach space $X$ is a kernel of some non-zero bounded functional and kernels of $f,g\in X^*$ are different if and only if $f$ and $g$ are linearly independent (3.1.13, 3.1.14 of
		\cite{semadeni}). 
		
		For (2) assume that $H\not\subseteq H_i$ for any $i\in\omega$. Then $H_i\cap H$ are nowhere dense in $H$. Hence by Baire category theorem $\bigcup_{i\in \omega} H_i\cap H$ has empty interior in $H$, which leads to contradiction with $H=\bigcup_{i\in \omega} H_i\cap H$. Now use (1).
	\end{proof}

	\subsection{Cardinalities of Banach spaces}
	
	Let us recall here some well-known facts concerning cardinalities of Banach spaces.
	The first one follows from the Lemma 2.8 of \cite{banach-cardinality} and the fact that $(\kappa^\omega)^\omega=\kappa^\omega$. 
	\begin{proposition}\label{card-banach}
		If $X$ is a Banach space, then $\dens(X)^\omega=|X|^\omega=|X|$.
	\end{proposition}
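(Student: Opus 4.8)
The plan is to sandwich $|X|$ between $\dens(X)$ and $\dens(X)^\omega$ by elementary means and then invoke the cited lemma to collapse the sandwich. Write $\kappa=\dens(X)$ and fix a dense subset $D\subseteq X$ with $|D|=\kappa$ (in the finite-dimensional case take $D$ countable, e.g.\ the rational span of a Hamel basis).

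First I would record the easy chain $\kappa\le |X|\le\kappa^\omega$. The left inequality is clear since $D\subseteq X$ (and in the finite-dimensional case $\kappa=\aleph_0\le\cc=|X|$). For the right inequality, density of $D$ means every $x\in X$ is the limit of some sequence in $D$; assigning to each $x$ one such sequence gives an injection $X\hookrightarrow D^{\N}$, so $|X|\le|D|^{\aleph_0}=\kappa^\omega$. Raising this chain to the power $\omega$ and using $(\kappa^\omega)^\omega=\kappa^{\omega\cdot\omega}=\kappa^\omega$ already yields $|X|^\omega=\kappa^\omega$, which disposes of one of the two asserted equalities.

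It then remains to upgrade $|X|\le\kappa^\omega$ to $|X|=\kappa^\omega$, i.e.\ to prove the reverse inequality $\kappa^\omega\le|X|$. This is exactly the content of Lemma 2.8 of \cite{banach-cardinality}: using the density hypothesis one produces $\kappa$-many unit vectors $(x_\alpha)_{\alpha<\kappa}$ in $X$ that are sufficiently independent that distinct norm-convergent series $\sum_n \lambda_n x_{\alpha_n}$ over countably many indices, with scalars drawn from a fixed Cantor-type coding set, have distinct sums, thereby embedding a set of size $\kappa^\omega$ into $X$; in the finite-dimensional case the assertion is just $\kappa^\omega=\cc=|X|$. Granting this, $|X|=\kappa^\omega=\dens(X)^\omega$, and together with $|X|^\omega=\kappa^\omega$ from the previous step we obtain $\dens(X)^\omega=|X|^\omega=|X|$, as desired.

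The only genuine work is the construction behind $\kappa^\omega\le|X|$, and since it is already available in the literature in precisely the form needed I would simply cite it rather than reproduce it; the remainder is routine cardinal arithmetic together with the metric completeness of $X$ used in the injection $X\hookrightarrow D^{\N}$.
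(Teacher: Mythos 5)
Your proof is correct and follows essentially the same route as the paper, which likewise derives the statement from Lemma 2.8 of the cited reference together with the identity $(\kappa^\omega)^\omega=\kappa^\omega$; you merely spell out the elementary sandwich $\kappa\le|X|\le\kappa^\omega$ that the paper leaves implicit.
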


	\begin{proposition}\label{card-dual}
		If $X$ is a Banach space of dimension bigger than $1$, then $|X^*|=|\h(X)|$.
	\end{proposition}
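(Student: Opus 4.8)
The plan is to exploit the standard correspondence between hyperplanes of $X$ and nonzero functionals modulo rescaling. As recalled in the proof of Lemma~\ref{inclusion}, every hyperplane of $X$ is the kernel of some nonzero $f\in X^*$, and for nonzero $f,g\in X^*$ one has $\ker f=\ker g$ if and only if $f$ and $g$ are linearly dependent. Thus the map $\Phi\colon X^*\setminus\{0\}\to\h(X)$ defined by $\Phi(f)=\ker f$ is well defined and surjective, and the fiber over a hyperplane $H=\ker f$ is exactly $\Phi^{-1}(H)=\{\lambda f:\lambda\in\R\setminus\{0\}\}$, which has cardinality $\mathfrak{c}$.

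Next I would record two easy lower bounds on cardinalities, both using $\dim X>1$. Pick $x_1,x_2\in X$ linearly independent; by Hahn–Banach choose $f_1,f_2\in X^*$ with $f_i(x_j)=\delta_{i,j}$, so that $f_1$ and $f_2$ are linearly independent. For distinct $\lambda\in\R$ the functionals $f_1+\lambda f_2$ are pairwise linearly independent, hence the kernels $\ker(f_1+\lambda f_2)$ are pairwise distinct hyperplanes; this gives $|\h(X)|\ge\mathfrak{c}$. Likewise $X^*$ contains the line $\{\lambda f_1:\lambda\in\R\}$, so $|X^*|\ge\mathfrak{c}$; in particular $X^*$ is infinite and $|X^*\setminus\{0\}|=|X^*|$.

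Finally I would combine these. Decomposing $X^*\setminus\{0\}$ into the disjoint fibers $\Phi^{-1}(H)$ for $H\in\h(X)$, each of size $\mathfrak{c}$, yields
\[
|X^*|=|X^*\setminus\{0\}|=|\h(X)|\cdot\mathfrak{c}.
\]
Since $|\h(X)|\ge\mathfrak{c}$, the set $\h(X)$ is infinite and cardinal arithmetic gives $|\h(X)|\cdot\mathfrak{c}=|\h(X)|$, whence $|X^*|=|\h(X)|$.

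I do not expect any genuine obstacle: the argument is essentially a fiber-counting computation. The only place the hypothesis $\dim X>1$ is used is in the lower bound $|\h(X)|\ge\mathfrak{c}$, which is precisely what absorbs the factor $\mathfrak{c}$; and indeed the statement fails for $\dim X=1$, where $\h(X)=\{\{0\}\}$ is a singleton while $|X^*|=\mathfrak{c}$.
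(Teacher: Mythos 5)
Your proof is correct and follows essentially the same route as the paper: the fiber-counting identity $|X^*|=\mathfrak{c}\cdot|\h(X)|$ via the kernel correspondence, combined with the lower bound $|\h(X)|\ge\mathfrak{c}$ obtained from the pencil of hyperplanes $\ker(f_1+\lambda f_2)$ for linearly independent $f_1,f_2$. The extra details you supply (Hahn--Banach to produce the independent functionals, and the counterexample for $\dim X=1$) are fine but not a different argument.
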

	\begin{proof}
		Every hyperplane in a Banach space $X$ is a kernel of some non-zero bounded functional and kernels of $f,g\in X^*$ are different if and only if $f$ and $g$ are linearly independent (3.1.13, 3.1.14 of
		\cite{semadeni}). So  $|X^*|=\mathfrak{c}\cdot|\h(X)|$. If $f,g\in X^*$ are linearly independent, then the kernels of $f+\lambda g$  are different for different choices of $\lambda \in \R\backslash \{0\}$. So
		$\mathfrak{c}\leq|\h(X)|$ and so $|X^*|=|\h(X)|$.
	\end{proof}
	
	Note that $|X^*|$ is not determined by $|X|$ or $\dens(X)$. By  Proposition \ref{card-banach}
	we have $\dens(c_0(\mathfrak{c}))=|c_0(\mathfrak{c})|=\dens(\ell_1(\mathfrak{c}))=
	|\ell_1(\mathfrak{c})|=\mathfrak{c}$ and $\dens(\ell_\infty(\mathfrak{c}))=
	|\ell_\infty(\mathfrak{c})|=2^{\mathfrak{c}}$ while $c_0(\mathfrak{c})^*=\ell_1(\mathfrak{c})$ 
	and $\ell_1^*(\mathfrak{c})=\ell_\infty(\mathfrak{c})$.
	
	\subsection{Ideals}
	\begin{proposition}\label{simple-cichon} Let $X$ be a Banach space of dimension bigger than $1$. Then 
		$\add(X) \leq \cov(X) \leq \cof(X)$ and  $\add (X)\leq \non(X) \leq \cof(X)$. 
	\end{proposition}
	\begin{proof} This is elementary. Since $\I(X)$ contains all singletons and is a $\sigma$-ideal,
		Lemma 1.3.2 of \cite{bartoszynski} applies.
	\end{proof}
	
	\section{Basic results on the values of the cardinal characteristics}\label{basics}
	
	\subsection{Separable Banach spaces}\label{separable}
	
	It turns out that the values of our cardinal characteristics on separable Banach spaces are 
	the same. We include the proof of the following result for the convenience of the reader.
	
	\begin{proposition}[{\cite[Theorem 2.4]{klee}}]\label{klee} 
		Let $X$ be a separable Banach space. Then there exists a set $Y\subseteq X $ of cardinality $\mathfrak c$ such that for every hyperplane $H$ of $X$ the set $H\cap Y$ is finite.
	\end{proposition}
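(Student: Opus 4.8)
The plan is to build the set $Y$ by a transfinite recursion of length $\mathfrak c$, adding one point at a time while keeping the intersection with each hyperplane finite. The key observation is that the family $\h(X)$ of hyperplanes of a separable Banach space $X$ has cardinality at most $\mathfrak c$: by Proposition \ref{card-dual} (or directly by its proof) hyperplanes correspond to one-dimensional subspaces of $X^*$, and $|X^*|\le (\dens X)^\omega=\mathfrak c$ since $X^*$ embeds into $\R^D$ for a countable dense $D\subseteq X$. So enumerate $\h(X)=\{H_\xi:\xi<\mathfrak c\}$. I will also fix, for each hyperplane $H$, a norm-one functional $x^*_H$ with $H=\ker x^*_H$.

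Now I recurse: having chosen distinct points $\{y_\eta:\eta<\xi\}$ with $\xi<\mathfrak c$, I must pick $y_\xi\in X\setminus\{y_\eta:\eta<\xi\}$ so that for every hyperplane $H$, the set $\{y_\eta:\eta\le\xi\}\cap H$ stays finite. Since at stage $\xi$ only $|\xi|<\mathfrak c$ points have been chosen so far, automatically every hyperplane already meets the current set in a set of size $<\mathfrak c$; the real requirement to propagate is \emph{finiteness}, so I need the stronger inductive hypothesis that each $H_\zeta$ meets $\{y_\eta:\eta\le\xi\}$ in a finite set. To maintain this, at stage $\xi$ I forbid $y_\xi$ from lying in those hyperplanes $H_\zeta$ that \emph{already} contain infinitely… — but that cannot happen by hypothesis; rather, the danger is creating new infinite intersections \emph{in the limit}. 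The standard fix: arrange the bookkeeping so that each hyperplane $H_\zeta$ is ``handled'' cofinally often, and once handled we permanently forbid future points from $H_\zeta$. Concretely, use a bijection between $\mathfrak c$ and $\mathfrak c\times\mathfrak c$; at stage $\xi$ corresponding to a pair $(\zeta,n)$, if $|\{y_\eta:\eta<\xi\}\cap H_\zeta|\ge n$ we henceforth add $H_\zeta$ to the ``forbidden list'' $\mathcal F_\xi$ (a set of $<\mathfrak c$ hyperplanes), and in all cases choose $y_\xi\notin\bigcup\mathcal F_\xi$ and $y_\xi\ne y_\eta$ for $\eta<\xi$.

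The point where a choice must actually be shown possible is: the set $\bigcup\mathcal F_\xi\cup\{y_\eta:\eta<\xi\}$ is a union of fewer than $\mathfrak c$ hyperplanes together with fewer than $\mathfrak c$ points; I need $X$ not to be covered by $<\mathfrak c$ hyperplanes plus $<\mathfrak c$ points. This is where I invoke a covering fact about $X$: a separable Banach space of dimension $>1$ cannot be written as a union of $<\mathfrak c$ hyperplanes (indeed $\cov(X)=\mathfrak c$ for separable $X$, but to avoid circularity I argue directly — pick a two-dimensional subspace $P\subseteq X$; each hyperplane $H$ meets $P$ in a subspace of dimension $\le 1$, i.e. in a line or $\{0\}$, and a $2$-dimensional real space is not a union of $<\mathfrak c$ lines, since there are $\mathfrak c$ lines through the origin and a line off the origin meets each such line in at most one point; removing also $<\mathfrak c$ extra points still leaves a point of $P$). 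Hence a valid $y_\xi$ exists at every stage.

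Finally I verify the conclusion. The set $Y=\{y_\xi:\xi<\mathfrak c\}$ has cardinality $\mathfrak c$ since the $y_\xi$ were chosen distinct. Fix any hyperplane $H$; it equals $H_\zeta$ for some $\zeta<\mathfrak c$. If $H_\zeta\cap Y$ were infinite, then for some stage $\xi_0$ we would have $|\{y_\eta:\eta<\xi_0\}\cap H_\zeta|\ge n$ for every $n$; but the bookkeeping visits the pair $(\zeta,n)$ at some stage, at which point we either already have $H_\zeta\in\mathcal F$ or we add it, and thereafter no $y_\xi$ lies in $H_\zeta$ — so $H_\zeta\cap Y$ is contained in the finitely-or-boundedly-many points chosen before $H_\zeta$ entered $\mathcal F$, hence is finite; more carefully, taking $n$ one larger than that bound gives a contradiction. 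I expect the main obstacle to be purely organizational: getting the bookkeeping and the inductive hypothesis (permanent forbidding versus finiteness) lined up so that the limit stages cause no trouble; the geometric input (a plane is not a small union of lines) is elementary.
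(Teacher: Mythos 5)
Your recursion cannot deliver the \emph{finiteness} of $H\cap Y$, and this is a fatal gap rather than an organizational one. The bookkeeping forbids $H_\zeta$ only from the stage $\xi_\zeta$ at which some pair $(\zeta,n)$ is first processed; before that stage nothing prevents the chosen points from landing in $H_\zeta$, so all you can conclude is $H_\zeta\cap Y\subseteq H_\zeta\cap\{y_\eta:\eta<\xi_\zeta\}$, a set of size at most $|\xi_\zeta|$ --- typically infinite. There is no ``bound'' on $|H_\zeta\cap\{y_\eta:\eta<\xi_\zeta\}|$ to take ``$n$ one larger than''. The natural repair --- at \emph{every} stage forbid all hyperplanes already containing at least two chosen points --- is impossible: if $\dim X\geq 4$ and $y,y'$ are two linearly independent chosen points, then for any $x\in X$ the space $\lin\{y,y',x\}$ is a proper closed subspace, so by Hahn--Banach some hyperplane contains all three; hence the union of the hyperplanes through $y$ and $y'$ is all of $X$ and cannot be avoided. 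Put differently, the property you need of the final set is that \emph{every infinite subset of $Y$ has dense linear span}; this is trivially preserved when a single point is added, but it is created anew at limit stages of countable cofinality (a cofinal $\omega$-sequence of chosen points is an infinite set that was never contained in any earlier stage), and it cannot be secured by avoiding a family of fewer than $\mathfrak c$ hyperplanes at each step. Finiteness must come from a global structural property of $Y$, not from pointwise avoidance.

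There is a second error in the covering fact driving the recursion: the claim that each hyperplane meets a fixed two-dimensional subspace $P$ in a subspace of dimension at most $1$ fails once $\dim X\geq 3$, since $\ker x^*\supseteq P$ whenever $x^*$ annihilates $P$, and for infinite-dimensional $X$ there are $\mathfrak c$ such hyperplanes for every $P$; your forbidden family could therefore cover $P$ entirely. (The statement that $X$ is not a union of fewer than $\mathfrak c$ hyperplanes is true, but the cleanest proof of it goes through the proposition itself.) The paper obtains the needed global structure in one stroke: fix a linearly dense sequence $(x_n)$ of norm-one vectors and put $y_\lambda=\sum_n\lambda^n x_n$ for $\lambda\in(0,1/2)$; for a hyperplane $\ker x^*$ the function $\lambda\mapsto x^*(y_\lambda)=\sum_n x^*(x_n)\lambda^n$ is analytic on $(-1,1)$ and not identically zero, hence has only finitely many zeros in $(0,1/2)$. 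This makes every infinite subset of $Y=\{y_\lambda:\lambda\in(0,1/2)\}$ linearly dense simultaneously, which is exactly what the transfinite construction cannot arrange.
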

	\begin{proof} 
		Let 
		$\{x_n: n\in \N\}\subseteq X$ be linearly dense in $X$ and consist  of norm one vectors. 
		Let $$y_\lambda=\sum_{n\in \N}\lambda^nx_n$$ for each $\lambda \in (0,1/2)$.
		We claim that $Y=\{y_\lambda: \lambda\in (0,1/2)\}$ satisfies the theorem.
		Let $H$ be a hyperplane $x^*\in X^*$ be the norm one nonzero linear bounded functional
		whose kernel is $H$.
		We have 
		$\limsup_{n\rightarrow \infty}\sqrt[n]{|x^*(x_n)|}
		\leq\sup_{n\in \N}\sqrt[n]{|x^*(x_n)|}\leq 1$ and so the formula
		$$f(\lambda)=\sum_{n\in \N}x^*(x_n)\lambda^n$$
		defines an analytic function on $(-1, 1)$. $f\equiv0$ on $(-1, 1)$ only
		if $x^*(x_n)=0$ for each $n\in \N$, which is not the case since $x^*$ is not the zero functional
		on $X$. By the properties of analytic functions $f$ cannot have infinitely many zeros in $(0, 1/2)$,
		which means that  
		$0=f(\lambda)=x^*(\sum_{n\in B}\lambda^n x_n)=x^*(y_\lambda)$ only for finitely many 
		$\lambda\in (0,1)$ as required.
	\end{proof}

	\begin{reptheorem}[\ref{main-separable}]
		Suppose that $X$ is a separable Banach space of dimension bigger than $1$. Then
		the following equalities hold:
		\begin{itemize}
			\item $\mathfrak{add}(X)=\omega_1$, 
			\item $\mathfrak{non}(X)=\omega_1$, 
			\item $\mathfrak{cov}(X)=\mathfrak{c}$,
			\item $\mathfrak{cof}(X)=\mathfrak{c}$.
		\end{itemize}
	\end{reptheorem}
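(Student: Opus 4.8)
The plan is to read off all four values from Klee's Proposition~\ref{klee} together with the elementary inequalities of Proposition~\ref{simple-cichon} and the cardinal arithmetic of separable spaces. Once and for all I would fix, by Proposition~\ref{klee}, a set $Y\subseteq X$ with $|Y|=\cc$ such that $H\cap Y$ is finite for every $H\in\h(X)$; this single object will simultaneously witness the upper bound $\non(X)\le\omega_1$ and the lower bound $\cov(X)\ge\cc$, and the other two values will then follow formally.

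I would carry out the steps in the following order. \textbf{(i)} $\non(X)\ge\omega_1$ and $\add(X)\ge\omega_1$: the family $\I(X)$ is a $\sigma$-ideal containing every singleton, since $\dim X>1$ allows us to put any given point on the kernel of a nonzero functional; hence every countable subset of $X$ lies in $\I(X)$. \textbf{(ii)} $\non(X)\le\omega_1$: pick any $Y_0\subseteq Y$ with $|Y_0|=\omega_1$; for a countable $\F\subseteq\h(X)$ the trace $Y_0\cap\bigcup\F=\bigcup_{H\in\F}(Y_0\cap H)$ is a countable union of finite sets, hence a proper subset of $Y_0$, so $Y_0\notin\I(X)$. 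With (i) and $\add(X)\le\non(X)$ from Proposition~\ref{simple-cichon} this already yields $\add(X)=\non(X)=\omega_1$. \textbf{(iii)} $\cov(X)\ge\cc$: were $X$ covered by fewer than $\cc$ members of $\I(X)$, replacing each member by a countable family of hyperplanes covering it would cover $X$, hence also $Y$, by fewer than $\cc$ hyperplanes; but then $Y$ would be contained in a union of $\kappa<\cc$ finite sets, which is absurd since $\kappa\cdot\omega<\cc=|Y|$. \textbf{(iv)} $\cof(X)\le\cc$, and hence $\cov(X)\le\cc$: since $X$ is separable, $|X|=\dens(X)^\omega=\cc$ by Proposition~\ref{card-banach}, and $|\h(X)|=|X^*|=\cc$ by Proposition~\ref{card-dual} (a bounded functional is determined by its values on a fixed countable dense set, and there are at least $\cc$ of them as $\dim X>1$); so the at most $\cc^\omega=\cc$ countable unions of hyperplanes form a cofinal subfamily of $\I(X)$, giving $\cof(X)\le\cc$ and a fortiori $\cov(X)\le\cof(X)\le\cc$. \textbf{(v)} Finally $\cof(X)\ge\cov(X)=\cc$ by Proposition~\ref{simple-cichon}, so $\cov(X)=\cof(X)=\cc$.

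I do not expect a genuine obstacle here: Proposition~\ref{klee} already carries all the analytic content, and everything else is bookkeeping with the $\sigma$-ideal inequalities and with the identities $\dens(X)^\omega=\cc$ and $|X^*|=\cc$ that hold for separable $X$. The only point requiring a moment's care is step~\textbf{(iv)}: members of $\I(X)$ need not be single hyperplanes, so the cofinal family one should count is the family of \emph{countable} unions of hyperplanes. Alternatively, the content of step~\textbf{(iv)} and the lower bound $\add(X)\ge\omega_1$ from~\textbf{(i)} may simply be quoted from Theorem~\ref{main-nonseparable}, which gives $\add(X)=\omega_1$ and $\cof(X)=|X^*|$ for every Banach space of dimension bigger than~$1$.
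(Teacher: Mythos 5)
Your proposal is correct and follows essentially the same route as the paper: the set $Y$ from Proposition~\ref{klee} witnesses both $\non(X)\le\omega_1$ and $\cov(X)\ge\cc$, and the remaining values follow from Proposition~\ref{simple-cichon} together with $|\h(X)|\le\cc$ for separable $X$ (equivalently, from Propositions~\ref{basic-add} and~\ref{basic-cof}). The only cosmetic difference is that you derive $\non(X)\le\omega_1$ by counting (a countable union of finite sets cannot exhaust an $\omega_1$-sized subset of $Y$), while the paper pigeonholes an infinite subset of $Y$ into a single hyperplane; these are the same argument.
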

	\begin{proof}
		$|\h(X)|\leq \mathfrak c$ if $X$ is separable as hyperplanes are determined by
		continuous functionals and such are determined by their values on a dense set.
		So by Proposition \ref{simple-cichon}  it is enough to prove that
		$\non(X)=\omega_1$ and $\cov(X)=\mathfrak c$.
		Let $Y$ be the set from Proposition \ref{klee} and $Y'\subseteq Y$ any set such that $|Y'|=\omega_1$. If $Y'$ is covered by countably many hyperplanes $\{H_n\}_{n\in \N}$, then there is $n\in \N$ for which $H_n$ contains infinite subset $Z\subseteq Y'$, so $H_n=\overline{\lin (Z)}=X$, which is contradiction. Hence $\non(X)=\omega_1$. 
		
		Assume now that $X$ is covered by $\kappa<\mathfrak c$ sets from $\I(X)$. Then $X$ is covered by $\kappa$ hyperplanes, so there is a hyperplane $H$ containing infinite subset of $Y$ and again we get contradiction. Hence $\cov(X)=\mathfrak c$. 
	\end{proof}
	
	Note that the first and last equations are also special cases of Propositions \ref{basic-add} and \ref{basic-cof}.
	
	\subsection{General Banach spaces}\label{general}
	
	\begin{proposition}\label{basic-add}
		Let $X$ be a Banach space of dimension bigger than $1$. Then 
		$$\add(X)=\omega_1.$$
	\end{proposition}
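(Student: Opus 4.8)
The plan is to establish the two inequalities $\omega_1\le\add(X)$ and $\add(X)\le\omega_1$ separately; both are short, and no serious obstacle is expected.

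For the lower bound $\omega_1\le\add(X)$, I would simply observe that $\I(X)$ is a $\sigma$-ideal: if $\{Y_n:n\in\N\}\subseteq\I(X)$ and each $Y_n$ is covered by a countable family $\F_n\subseteq\h(X)$, then $\bigcup_{n}Y_n$ is covered by the countable family $\bigcup_n\F_n$, so $\bigcup_n Y_n\in\I(X)$. Hence no countable subfamily of $\I(X)$ can have union outside $\I(X)$, and $\add(X)\ge\omega_1$.

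For the upper bound $\add(X)\le\omega_1$, I would exhibit $\omega_1$ members of $\I(X)$ whose union is not in $\I(X)$, namely $\omega_1$ pairwise distinct hyperplanes. Such a family exists because $\dim(X)>1$: there are two linearly independent functionals $f,g\in X^*$, and as in the proof of Proposition \ref{card-dual} the kernels $\ker(f+\lambda g)$, $\lambda\in\R$, are pairwise distinct; choose $\omega_1$ of them and call them $H_\alpha$, $\alpha<\omega_1$. Each $H_\alpha$ is a single hyperplane, hence $H_\alpha\in\I(X)$. Now suppose toward a contradiction that $\bigcup_{\alpha<\omega_1}H_\alpha\in\I(X)$, say $\bigcup_{\alpha<\omega_1}H_\alpha\subseteq\bigcup_{n\in\N}G_n$ with $G_n\in\h(X)$. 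Then for each $\alpha$ we have $H_\alpha\subseteq\bigcup_{n\in\N}G_n$, so Lemma \ref{inclusion}(2) gives $n(\alpha)\in\N$ with $H_\alpha=G_{n(\alpha)}$. Since the $H_\alpha$ are pairwise distinct, $\alpha\mapsto n(\alpha)$ is an injection of an uncountable set into $\N$, a contradiction. Therefore $\bigcup_{\alpha<\omega_1}H_\alpha\notin\I(X)$, witnessing $\add(X)\le\omega_1$.

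The only points requiring care are that the hypothesis $\dim(X)>1$ is genuinely used (to produce more than one, hence $\omega_1$ many, hyperplanes) and that Lemma \ref{inclusion}(2) carries the combinatorial content; the rest is the definition of a $\sigma$-ideal. Combining the two inequalities yields $\add(X)=\omega_1$.
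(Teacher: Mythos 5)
Your proposal is correct and follows essentially the same route as the paper: the lower bound from $\I(X)$ being a $\sigma$-ideal, and the upper bound by taking $\omega_1$ pairwise distinct hyperplanes of the form $\ker(f+\lambda g)$ and applying Lemma \ref{inclusion} to rule out a countable cover. No gaps.
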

	\begin{proof} It is clear that $\add(X)\geq \omega_1$. If $f,g\in X^*$ are linearly independent, then the kernels of $f+\lambda g$  are different hyperplanes for different choices of $\lambda \in \R\backslash \{0\}$.
		So let $\F$ be any collection of $\omega_1$-many distinct hyperplanes. We have $\F\subseteq \I$.
		However $\bigcup\F\not\in \I$ because otherwise if $\{H_i: i\in \N\}\subseteq\h$ and $\bigcup\F\subseteq \bigcup_{i\in \N}{H_i}$,
		then for every $H\in \F$ we have $H=H_i$ for some $i\in \N$ by Proposition \ref{inclusion} which
		contradicts the fact that $\F$ is uncountable.
	\end{proof}
	
	\begin{proposition}\label{basic-cof}
		Let $X$ be a Banach space of dimension bigger than $1$. Then 
		$$\cof(X)=|X^*|.$$
	\end{proposition}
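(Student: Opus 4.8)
The plan is to establish the two inequalities $\cof(X)\le|X^*|$ and $\cof(X)\ge|X^*|$ separately, in each case reducing everything to counting hyperplanes by means of Proposition \ref{card-dual} (which gives $|X^*|=|\h(X)|$) and to Lemma \ref{inclusion}(2) (a hyperplane contained in a countable union of hyperplanes is one of them).

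For the upper bound I would consider the family
$$\mathcal{D}=\Big\{\textstyle\bigcup_{i\in\N}H_i : H_i\in\h(X)\text{ for all }i\in\N\Big\}$$
of all countable unions of hyperplanes. Each member of $\mathcal{D}$ lies in $\I(X)$, and $\mathcal{D}$ is cofinal in $\I(X)$: this is immediate from the definition of $\I(X)$, since every $Y\in\I(X)$ is, by definition, a subset of some countable union of hyperplanes, i.e.\ of some member of $\mathcal{D}$. Hence $\cof(X)\le|\mathcal{D}|\le|\h(X)|^\omega$. By Proposition \ref{card-dual} we have $|\h(X)|=|X^*|$, and by Proposition \ref{card-banach} applied to the Banach space $X^*$ we have $|X^*|^\omega=|X^*|$; therefore $\cof(X)\le|X^*|$.

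For the lower bound I would take an arbitrary cofinal family $\mathcal{D}\subseteq\I(X)$ and, for each $D\in\mathcal{D}$, fix a countable $\F_D\subseteq\h(X)$ witnessing $D\in\I(X)$, that is, with $D\subseteq\bigcup\F_D$. Given any hyperplane $H\in\h(X)$, cofinality yields $D\in\mathcal{D}$ with $H\subseteq D\subseteq\bigcup\F_D$, and since $\F_D$ is countable, Lemma \ref{inclusion}(2) forces $H\in\F_D$. Thus $\h(X)=\bigcup_{D\in\mathcal{D}}\F_D$, so $|\h(X)|\le\aleph_0\cdot|\mathcal{D}|$. As $\cc\le|\h(X)|$ (noted in the proof of Proposition \ref{card-dual}), the set $\h(X)$ is infinite, which forces $\mathcal{D}$ to be infinite and hence $|\mathcal{D}|=\aleph_0\cdot|\mathcal{D}|\ge|\h(X)|=|X^*|$. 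Combining the two bounds gives $\cof(X)=|X^*|$.

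I do not expect any genuine obstacle. The only point requiring a little care is that a member $D$ of $\I(X)$ need not itself be a countable union of hyperplanes but only a subset of one, which is why in the lower bound one must fix a witnessing family $\F_D$ rather than try to read the hyperplanes off $D$ directly; once this is done, both inequalities are routine counting arguments resting on Lemma \ref{inclusion}(2) together with the cardinal arithmetic identity $|X^*|^\omega=|X^*|$.
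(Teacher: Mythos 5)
Your proof is correct and follows essentially the same route as the paper: the upper bound via the family of all countable unions of hyperplanes together with $|\h(X)|^\omega=|X^*|^\omega=|X^*|$, and the lower bound by observing that every hyperplane must appear in some witnessing countable family by Lemma \ref{inclusion}(2). The paper compresses your choice of witnessing families $\F_D$ into a ``without loss of generality'' step, but the argument is the same.
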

	\begin{proof}
		Let $\F$ be a cofinal family in $\I(X)$. Without losing generality we can assume that $\F$ consists of countable sums of hyperplanes. By Lemma \ref{inclusion} every set in $\F$ contains only countably many hyperplanes, so $|\F|\geq |X^*|$. Moreover $|\F|$ is not greater than cardinality of the family  of all countable sets of hyperplanes which is equal to $|X^*|^\omega=|X^*|$
		by Proposition \ref{card-banach}. Thus $|\F|=|X^*|$.
	\end{proof}
	
	\begin{proposition}\label{basic-non} Let $X$ be a Banach space of dimension bigger than $1$. 
		Then
		$$\dens(X)\leq \mathfrak{non}(X)\leq \cf([\dens(X)]^\omega).$$
		If $\cf(\dens(X))=\omega$, then $\dens(X)< \mathfrak{non}(X)$. 
	\end{proposition}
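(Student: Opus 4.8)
The plan is to establish the three assertions — the lower bound $\dens(X)\le\non(X)$, the upper bound $\non(X)\le\cf([\dens(X)]^\omega)$, and the strictness when $\cf(\dens(X))=\omega$ — by separate arguments, the first two being elementary counting-and-covering facts and the third a cardinal-arithmetic observation.

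First I would prove $\dens(X)\le\non(X)$. Let $Y\subseteq X$ with $|Y|<\dens(X)$; I must show $Y\in\I(X)$. Pick any $y_0\in X\setminus\{0\}$ (possible since $\dim X>1$) and consider, for each $y\in Y$, a functional $x_y^*\in X^*$ with $y\in\ker(x_y^*)$: concretely, if $y\ne 0$ one may take any $x^*$ vanishing at $y$ but not identically zero (Hahn–Banach on a complement), and if $y=0$ take any nonzero functional. Then $Y\subseteq\bigcup_{y\in Y}\ker(x_y^*)$, a union of $|Y|<\dens(X)$ hyperplanes; but that is too many, so this crude approach does not directly work. Instead I would argue contrapositively via density: if $Y\subseteq X$ is \emph{not} covered by countably many hyperplanes I want $|Y|\ge\dens(X)$, equivalently $\overline{\lin(Y)}=X$ forces $|Y|\ge\dens(X)$. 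The cleanest route: if $|Y|<\dens(X)$ then $\overline{\lin(Y)}$ is a closed subspace of density $\le|Y|<\dens(X)$, hence a proper closed subspace, hence contained in the kernel of a single nonzero functional by Hahn–Banach; thus $Y$ is covered by \emph{one} hyperplane, so $Y\in\I(X)$. This gives $\non(X)\ge\dens(X)$.

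Next, $\non(X)\le\cf([\dens(X)]^\omega)$. Fix a dense set $D=\{d_\alpha:\alpha<\dens(X)\}$ and a cofinal family $\{A_\xi:\xi<\cf([\dens(X)]^\omega)\}$ of countable subsets of $\dens(X)$; I want to produce a set $Y$ of size $\cf([\dens(X)]^\omega)$ not in $\I(X)$. For each $\xi$ the set $\{d_\alpha:\alpha\in A_\xi\}$ spans a separable subspace, and I would invoke Proposition \ref{klee} inside each such subspace to get, for each $\xi$, a point $y_\xi$ in $\overline{\lin}\{d_\alpha:\alpha\in A_\xi\}$ chosen so that the family $\{y_\xi:\xi<\cf([\dens(X)]^\omega)\}$ cannot be captured by countably many hyperplanes of $X$ — the point being that any countable set $\F$ of hyperplanes of $X$ is determined by countably many functionals, whose relevant support lands in some $A_\xi$, and then inside that separable slice Proposition \ref{klee} lets us dodge all of $\F$. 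The main obstacle is exactly this step: making the selection of the $y_\xi$'s \emph{uniform} enough that a single $\xi$ simultaneously beats the whole countable family $\F$; I expect one must choose, via Proposition \ref{klee} applied to $\overline{\lin}\{d_\alpha:\alpha\in A_\xi\}$, an entire continuum-sized Klee set $Y_\xi$ inside that slice and then argue that for given $\F$ one picks $\xi$ with $A_\xi$ absorbing the supports of $\F$, whence $Y_\xi$ meets each $H\in\F$ finitely, so $Y_\xi\not\subseteq\bigcup\F$; collecting one point from outside $\bigcup\F$ is not enough, so instead one fixes a transfinite enumeration and does a diagonalization of length $\cf([\dens(X)]^\omega)$, at stage $\xi$ putting into $Y$ a point of $Y_\xi$ avoiding the (at that stage countably-or-fewer) hyperplanes already relevant. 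Care is needed that the bookkeeping over all countable $\F$ really has size $\cf([\dens(X)]^\omega)$ and not more; this is where the cofinality of $[\dens(X)]^\omega$ enters precisely.

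Finally, for strictness when $\cf(\dens(X))=\omega$: write $\kappa=\dens(X)=\sup_n\kappa_n$ with $\kappa_n<\kappa$ increasing. It suffices to show that every $Y\subseteq X$ with $|Y|=\kappa$ lies in $\I(X)$, so that $\non(X)>\kappa$. Given such $Y$, partition it as $Y=\bigcup_n Y_n$ with $|Y_n|\le\kappa_n<\kappa$; by the first part (applied to the lower bound argument) each $Y_n$ spans a proper closed subspace and so sits inside a single hyperplane $H_n$; then $Y\subseteq\bigcup_n H_n\in\I(X)$. Hence no subset of size exactly $\kappa$ witnesses $\non$, giving $\dens(X)<\non(X)$. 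I would present these three parts in this order, flagging the middle one (the upper bound via Klee's theorem and the $\cf([\dens(X)]^\omega)$ bookkeeping) as the substantive step and the other two as short Hahn–Banach arguments.
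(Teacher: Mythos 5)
Your first and third parts are correct and coincide with the paper's argument: a set of cardinality less than $\dens(X)$ spans a proper closed subspace and hence lies in a single hyperplane, and when $\cf(\dens(X))=\omega$ a set of cardinality $\dens(X)$ splits into countably many pieces of smaller cardinality, each inside one hyperplane. The genuine problem is in the middle step. You correctly set up the cofinal family $\{A_\xi\}$, the separable slices $X_\xi=\overline{\lin\{d_\alpha:\alpha\in A_\xi\}}$, the Klee sets inside them, and the key absorption argument (for each $H_n=\ker x_n^*$ in a countable family pick $\alpha_n$ with $x_n^*(d_{\alpha_n})\neq 0$ and choose $\xi$ with $\{\alpha_n:n\in\N\}\subseteq A_\xi$, so that each $H_n\cap X_\xi$ is a hyperplane of $X_\xi$). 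But both of your proposed ways of assembling $Y$ fail, and the fallback diagonalization cannot be repaired as stated. Taking one point $y_\xi$ per slice gives the right cardinality but no guarantee that some chosen point escapes a given countable union of hyperplanes; taking the whole continuum-sized Klee set $Y_\xi$ from each slice guarantees escape but gives $|Y|=\mathfrak c\cdot\cf([\dens(X)]^\omega)$, which exceeds $\cf([\dens(X)]^\omega)$ whenever $\mathfrak c>\cf([\dens(X)]^\omega)$ (for instance $\dens(X)=\omega_1$ with $\mathfrak c$ large, where $\cf([\omega_1]^\omega)=\omega_1$). The diagonalization you sketch, choosing at stage $\xi$ a point of $Y_\xi$ avoiding ``the hyperplanes already relevant,'' would have to anticipate every countable family of hyperplanes that the slice $X_\xi$ is supposed to defeat, and there are $|X^*|^\omega=|X^*|$ such families --- in general far more than $\cf([\dens(X)]^\omega)$ --- so the bookkeeping you flag as delicate is in fact impossible at that length.

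The repair is simple and is exactly what the paper does: from each slice take an intermediate-sized piece, namely a subset $Y_\xi$ of the Klee set of cardinality exactly $\omega_1$. Since every hyperplane of $X_\xi$ meets the Klee set in a finite set, a countable union of hyperplanes of $X$, none of which contains $X_\xi$, meets $Y_\xi$ in a countable set and therefore cannot cover it; and $\bigl|\bigcup_\xi Y_\xi\bigr|=\omega_1\cdot\cf([\dens(X)]^\omega)=\cf([\dens(X)]^\omega)$ because $\cf([\kappa]^\omega)\geq\kappa\geq\omega_1$ for uncountable $\kappa$. With this one change your outline becomes the paper's proof.
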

	\begin{proof}
		Assume that $Y\subseteq X$ and  $|Y|< \dens(X)$. Then $\overline{\lin(Y)}$ 
		is a proper subspace of $X$ and so it  is contained in some hyperplane and
		hence $Y\in \I$, so $\dens(X)\leq \mathfrak{non}(X)$. 
		
		Let $\{x_\alpha: \alpha<\dens(X)\}$ be a dense subset of $X$.
		Let $\mathcal F\subseteq [\dens(X)]^\omega$ be a family which is cofinal 
		in $[\dens(X)]^\omega$ and of cardinality $\cf([\dens(X)]^\omega)$. 
		By Proposition \ref{klee} for each $F\in \mathcal F$
		the subspace 
		$X_F=\overline{\lin\{x_\alpha: \alpha\in F\}}\subseteq X$  contains a subset
		$Y_F$ such that  $|Y_F|=\omega_1$ and it cannot be covered by countably many hyperplanes in $X_F$. Put 
		$Y=\bigcup_{F\in \mathcal F} Y_F$. We claim that $Y\not\in \I(X)$ and $|Y|=[\dens(X)]^\omega$.
		If $Y$ were covered by countably many hyperplanes $H_n$ of $X$,  there would be $F\in \mathcal F$ such that $H_n\cap X_F\not=X_F$ for all $n\in \N$ which is contradiction with the choice of $Y_F$. 
		Hence $Y\not\in\I(X)$. Also $|Y|=\omega_1\cdot|\mathcal F|
		=\omega_1\cdot \cf([\dens(X)]^\omega)=\cf([\dens(X)]^\omega)$ as $\dens(X)$ is uncountable.

		Now assume that $\cf(\dens(X))=\omega$. If $Y\subseteq X$ and $|Y|=\dens(X), Y=\bigcup_{n\in\N} Y_i$ with $|Y_i|<\dens(X)$, then every $Y_i$ is contained in some closed subspace of $X$ and hence in a hyperplane $H_i$
		for $i\in \N$. Thus $Y\in \I$.
	\end{proof}
	
	\begin{proposition}\label{basic-cov}
		Let $X$ be a Banach space of dimension bigger than $1$. Then 
		$$\omega_1\leq\cov(X)\leq \mathfrak c.$$
		In particular, under {\sf CH}, $\cov(X)=\omega_1$ for every nonseparable Banach space $X$.
		If $\cf(\dens(X))>\omega$, then $\mathfrak{cov}(X)\leq \cf(\dens(X))$. In particular
		if $\dens(X)=\omega_1$, then $\cov(X)=\omega_1$.
	\end{proposition}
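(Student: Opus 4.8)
The lower bound $\cov(X)\geq\omega_1$ is immediate from $\add(X)=\omega_1$ (Proposition \ref{basic-add}) together with Proposition \ref{simple-cichon}, so the real content is the upper bound. The plan is to produce, for each Banach space $X$, a family of at most $\mathfrak c$ hyperplanes whose union is all of $X$. First I would reduce to the separable-slice picture: fix a unit vector $e\in X$ and, for each $x\in X$, note that one would like a functional $x^*\in X^*$ with $x^*(x)=0$; but rather than choosing functionals pointwise I would organize the cover by a single parameter of size $\mathfrak c$. Concretely, pick any two-dimensional subspace $E$ of $X$; for each line $\ell$ through the origin in $E$ there is (by Hahn--Banach, extending a functional on $E$ vanishing on $\ell$) a hyperplane $H_\ell\supseteq\ell$, and these $H_\ell$, as $\ell$ ranges over the continuum-many lines of $E$, already cover $E$ itself — but of course not all of $X$. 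So this naive idea only covers a separable piece, which is exactly why the general ZFC bound stops at $\mathfrak c$ rather than $\omega_1$.

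The correct approach for the $\mathfrak c$ bound is to use density. Let $D=\{x_\alpha:\alpha<\dens(X)\}$ be dense; the point is that $\mathfrak c\geq\aleph_0$ and we may instead cover $X$ by the hyperplanes arising from a norming set of functionals of size $\mathfrak c$. Here is the step I would actually carry out: by the Hahn--Banach theorem the unit ball $B_{X^*}$ is weak$^*$-dense in itself and there is a norming subset; more usefully, for \emph{each} $x\in X\setminus\{0\}$ the set of hyperplanes containing $x$ is nonempty, and the union of \emph{all} hyperplanes of $X$ is $X$ (every vector lies in some hyperplane — extend a functional on $\lin(x,e)$ vanishing on $x$). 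Thus $\bigcup\h(X)=X$ trivially, and the only question is whether $\mathfrak c$ many of them suffice. For the bound $\cov(X)\leq\mathfrak c$ I would argue: $X$ has a subset $\{y_t:t<\mathfrak c\}$ which is linearly dense in a separable subspace $X_0$, and then... — no; instead, the clean route is that $\mathfrak c\le\cov(X)$ is false in general, so one must genuinely exhibit the cover. I would therefore choose a countable linearly independent set $\{z_n:n\in\N\}\subseteq X$ spanning a dense subspace of a separable $Y\subseteq X$, Hahn--Banach-extend coordinate functionals, and cover $X$ as follows: for each function $\varphi\colon\N\to\R$ in a suitable size-$\mathfrak c$ family, let $H_\varphi=\ker(x^*_\varphi)$ for an appropriate functional; the family of all such $\ker$ has size $\le\mathfrak c$ since it is indexed by $\R^\N$, which has size $\mathfrak c$. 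The key inclusion $X\subseteq\bigcup_\varphi H_\varphi$ follows because the map $x\mapsto$ (some functional killing $x$) can be realized with countably many real parameters once one fixes a fundamental biorthogonal-type system on a separable subspace — but since not every $X$ has this, I expect this to be the subtle point.

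The honest statement of the plan: the inequality $\cov(X)\le\mathfrak c$ should follow from the observation that every $x\in X$ lies in a hyperplane $\ker(x^*)$ where $x^*$ may be taken in a fixed weak$^*$-dense subset $S$ of $B_{X^*}$ of size $\mathfrak c$ — one gets such an $S$ because $\dens(B_{X^*},w^*)\le\mathfrak c$ always (the weak$^*$ topology on $B_{X^*}$ has a subbase given by $\{x^{**}<q\}$ for $q\in\Q$ and $x^{**}$ in a weak$^*$-dense, hence size-$\mathfrak c$-many-points-suffice, subset), hence $\{\ker(x^*):x^*\in S\}$ is a family of $\le\mathfrak c$ hyperplanes, and density of $S$ guarantees that for any $x$ there is $x^*\in S$ with $x^*(x)=0$ — \emph{this last claim is the crux and the main obstacle}, since weak$^*$-density of $S$ gives functionals \emph{close} to a given one but not one that exactly annihilates $x$. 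I expect the actual proof to fix this by the two-dimensional trick: given $x\ne 0$, pick $e$ with $x,e$ independent, take $x^*$ with $x^*(x)\ne 0\ne x^*(e)$, and then $\ker(x^* + \lambda\, y^*)$ for varying $\lambda$ and fixed $y^*$ sweeps through hyperplanes containing $x$ for the right $\lambda$; parametrizing by $(x^*,y^*,\lambda)$ ranging over a size-$\mathfrak c$ set handles everything. Then the ``in particular'' clauses: under {\sf CH}, $\mathfrak c=\omega_1$ so $\cov(X)=\omega_1$ by the already-proved bounds; and if $\cf(\dens(X))>\omega$, I would cover $X$ by the $\cf(\dens(X))$-many subspaces $\overline{\lin\{x_\beta:\beta<\alpha\}}$ for $\alpha$ ranging over a cofinal set in $\dens(X)$ — each is a \emph{proper} closed subspace by uncountable cofinality (any countable, indeed any $<\dens(X)$-sized, subset of $D$ fails to be dense), hence sits inside a hyperplane, giving $\cov(X)\le\cf(\dens(X))$; when $\dens(X)=\omega_1$ this yields $\cov(X)\le\omega_1$, and combined with $\cov(X)\ge\omega_1$ it gives equality.
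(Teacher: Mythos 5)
The parts you do complete are correct and coincide with the paper's arguments: the lower bound $\omega_1\le\cov(X)$ (the paper simply cites that $\I(X)$ is a $\sigma$-ideal, which is the same content as routing through $\add(X)=\omega_1$), and the final clause, where you cover $X$ by the proper closed subspaces $\overline{\lin\{x_\beta:\beta<\alpha\}}$ for $\alpha$ ranging over a cofinal subset of $\dens(X)$, note that each lies inside a hyperplane by Hahn--Banach, and use the uncountable cofinality to see that every $x$, being in the closure of countably many $x_\beta$'s, falls into one of these subspaces. That is exactly the paper's proof of $\cov(X)\le\cf(\dens(X))$.

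The genuine gap is in the ZFC bound $\cov(X)\le\mathfrak c$. Your concluding ``two-dimensional trick'' circles the right idea but never lands it: you still choose $x^*$ (and $y^*$) \emph{depending on the point} $x$, and then assert that the triples $(x^*,y^*,\lambda)$ can be taken from ``a size-$\mathfrak c$ set'' without explaining where a pool of only $\mathfrak c$ many functionals sufficient for all $x$ comes from --- and for spaces with $|X^*|>\mathfrak c$ (e.g.\ $\ell_1(\lambda)$ for large $\lambda$) this is precisely what needs an argument. (The intermediate claim that $\dens(B_{X^*},w^*)\le\mathfrak c$ always is also false: $B_{\ell_\infty(\lambda)}=[-1,1]^\lambda$ has weak$^*$ density exceeding $\mathfrak c$ once $\lambda>2^{\mathfrak c}$; but you discard that route yourself.) The missing observation, which is the entire content of the paper's one-line proof, is that a \emph{single} fixed pair of linearly independent $f,g\in X^*$ suffices: the map $x\mapsto(f(x),g(x))$ takes values in $\R^2$, every vector of $\R^2$ is annihilated by some $(a,b)\neq(0,0)$, hence $x\in\ker(af+bg)$, and $\{\ker(af+bg):(a,b)\in\R^2\setminus\{(0,0)\}\}$ is a family of $\mathfrak c$ hyperplanes covering $X$. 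No density arguments, norming sets, or biorthogonal-type systems are needed for this bound.
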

	\begin{proof} Since $\I(X)$ is a $\sigma$-ideal, we have $\omega_1\leq\cov(X)$.
		Let $f,g\in X^*$ be linearly independent. Then for every $x\in X$ there are $(a,b)\in \R\backslash \{(0,0)\}$ such that $af(x)+bg(x)=0$. Thus the family 
		of hyperplanes $\{\ker af+bg : (a,b)\in \R^2\backslash \{(0,0)\} \}$ of cardinality $\mathfrak c$ covers $X$.
		
		Let $\kappa=\dens(X)$ and let  $\{x_\alpha: \alpha<\kappa\}$
		be a dense subset of $X$. Let $\kappa=\sup\{\alpha_\xi: \xi<\cf(\kappa)\}$.
		Let $X_\xi=\overline{\lin\{x_\alpha: \alpha<\alpha_\xi\}}$ for $\xi<\cf(\kappa)$.
		Each $X_\xi$ is  a proper subspace of $X$ since the density of $X$ is $\kappa>\alpha_\xi$.
		Also every element $x\in X$ is in the closure of a countable
		subset of $\{x_\alpha: \alpha<\kappa\}$, and so by the uncountable
		cofinality of $\kappa$ we conclude that $x\in X_\xi$ for some $\xi<\cf(\kappa)$.
	\end{proof}

	\section{Covering nonseparable Banach spaces with $\omega_1$ hyperplanes}\label{cov}
	
	By Proposition \ref{basic-cov} and Theorem \ref{main-separable} if we assume
	${\sf CH}$ we have $\cov(X)=\omega_1$ for all
	Banach spaces $X$. In this section we investigate whether $\cov(X)=\omega_1$
	may hold for all nonseparable Banach spaces without this assumption (Note that by Theorem \ref{main-separable}
	if {\sf CH} fails, then $\cov(X)>\omega_1$ for all separable Banach spaces).
	We prove that the value of $\cov$ is indeed $\omega_1$ for many classes of nonseparable Banach spaces
	(Propositions \ref{fund-biort}, \ref{ell1-inside}, \ref{scattered}) and that consistently it holds for all Banach spaces in the presence of diverse negations
	of {\sf CH} (Proposition \ref{con-cov}). 
	The deepest observations rely heavily on set-theoretic topological results of \cite{juhasz-szentmiklossy},
	\cite{dow-pavlov}, \cite{dow-countable-tightness}
	concerning small diagonal and countable tightness in compact Hausdorff spaces (Definition \ref{def-small-diagonal}).
	
	\begin{lemma}\label{cov-monotone} Suppose that $X, Y$ are Banach spaces and
		$T:X\rightarrow Y$ is a bounded linear operator whose range is dense in $Y$.
		Then $\cov(Y)\leq\cov(X)$.
	\end{lemma}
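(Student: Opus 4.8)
The plan is to show that any covering of $X$ by hyperplanes induces, via the adjoint operator $T^*$, a covering of $Y$ of the same cardinality. First I would recall the basic mechanism: a hyperplane of $Y$ is the kernel $\ker(y^*)$ of some nonzero $y^*\in Y^*$, and its $T$-preimage is $\ker(y^*\circ T)=\ker(T^*y^*)\subseteq X$. The crucial point is that because $T$ has dense range, $T^*$ is injective: if $T^*y^*=y^*\circ T=0$, then $y^*$ vanishes on the dense set $T[X]$, hence $y^*=0$. Consequently, for a nonzero $y^*$ the functional $T^*y^*$ is a nonzero element of $X^*$, so $\ker(T^*y^*)$ is a genuine hyperplane of $X$.

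Next I would run the argument in the useful direction. Suppose $\{H_\alpha:\alpha<\kappa\}$ is a family of hyperplanes of $X$ with $\bigcup_{\alpha<\kappa}H_\alpha=X$, where $\kappa=\cov(X)$; write $H_\alpha=\ker(x^*_\alpha)$ for nonzero $x^*_\alpha\in X^*$. The obstacle is that not every $x^*_\alpha$ need lie in the range of $T^*$, so I cannot directly pull the covering back through $T^*$. Instead I would cover $Y$ directly: given $y\in Y$, I want a hyperplane of $Y$ through $y$ chosen from a fixed family of size at most $\kappa$. Here I would use that $Y$ is itself covered by $\mathfrak c\le\cdot$ many hyperplanes through any fixed point in the spirit of Proposition \ref{basic-cov}; more efficiently, pick two linearly independent $y^*_0,y^*_1\in Y^*$ (possible since $\dim Y>1$), set $x^*_i=T^*y^*_i$, which are linearly independent in $X^*$ by injectivity of $T^*$, and for each $x\in X$ choose $(a,b)\neq(0,0)$ with $a x^*_0(x)+b x^*_1(x)=0$; then $x\in\ker(a x^*_0+b x^*_1)=\ker(T^*(a y^*_0+b y^*_1))$.

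The clean way to finish: since $T$ has dense range, every $y\in Y$ is a norm limit $y=\lim T x_n$; pulling back a covering of $X$ does not obviously cover such limits, so I pivot to the following observation, which is the real content. Fix a covering $X=\bigcup_{\alpha<\kappa}\ker(x^*_\alpha)$. For $y\in Y$, consider the functional $y^*\mapsto$ nothing — rather, note that $\{\ker(x^*):x^*\in X^*,\ x^*\in T^*[Y^*]\}$ still covers a dense subspace, and one checks that $\cov$ is governed by how $X^*$ (equivalently $\h(X)$) sits; by Proposition \ref{basic-cof} and the inequality $\cov\le\cof$ this already gives $\cov(Y)=|Y^*|\wedge(\ldots)$ only as an upper bound $\mathfrak c$. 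I expect the main obstacle to be precisely this mismatch, and the right fix is: take a covering $\{H_\alpha:\alpha<\kappa\}$ of $X$, and observe that for each $\alpha$ the closure $\overline{T[H_\alpha]}$ is either all of $Y$ or a proper closed subspace, hence contained in a hyperplane $G_\alpha$ of $Y$; one must discard the $\alpha$ with $\overline{T[H_\alpha]}=Y$ and argue, using density of $T[X]$ and $x=\lim$ of preimages together with the fact that a nonmeager-many $H_\alpha$ cannot all map onto a dense set unless — here the Baire-category argument of Lemma \ref{inclusion}(2) is the tool — to conclude $\{G_\alpha\}$ covers $Y$. Thus $\cov(Y)\le\kappa=\cov(X)$.
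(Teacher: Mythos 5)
The statement as printed has the inequality backwards: the paper's own proof, and the way the lemma is applied in Lemma \ref{bound-of-cov} and after Question \ref{question-quotients}, show that the intended (and correct) conclusion is $\cov(X)\le\cov(Y)$. The literal inequality $\cov(Y)\le\cov(X)$ is consistently false: the coordinate projection $T\colon\ell_1(\omega_1)\to\ell_1$ is a bounded operator with dense range, $\cov(\ell_1(\omega_1))=\omega_1$ by Proposition \ref{fund-biort}, while $\cov(\ell_1)=\mathfrak c$ by Theorem \ref{main-separable}, so the printed inequality fails whenever $\mathfrak c>\omega_1$. Your first paragraph contains exactly the right mechanism for the correct direction: $T^*$ is injective because $T$ has dense range, so the $T$-preimage of a hyperplane of $Y$ is a hyperplane of $X$. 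Applied to a covering $Y=\bigcup_{\alpha<\kappa}\ker y^*_\alpha$ with $\kappa=\cov(Y)$ and $y^*_\alpha\ne0$, this gives $X=\bigcup_{\alpha<\kappa}\ker(T^*y^*_\alpha)$, a covering of $X$ by $\kappa$ hyperplanes, hence $\cov(X)\le\cov(Y)$. That single observation is the paper's entire proof.

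Everything after your first paragraph is an attempt to prove the literal (false) direction by pushing a covering $\{H_\alpha\}$ of $X$ forward through $T$, and it cannot be repaired. The closure $\overline{T[H_\alpha]}$ is a proper subspace of $Y$ only when the functional defining $H_\alpha$ is proportional to an element of $T^*[Y^*]$, which may happen for no $\alpha$ at all: in the example above, $\ell_1(\omega_1)$ is covered by the $\omega_1$ kernels of the coordinate functionals $e^*_\gamma$ with $\omega\le\gamma<\omega_1$, and $T[\ker e^*_\gamma]=\ell_1$ for every such $\gamma$, so after discarding the indices with dense image nothing remains --- and indeed $\ell_1$ cannot be covered by $\omega_1<\mathfrak c$ hyperplanes. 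The interlude about two linearly independent functionals only reproves the bound $\cov\le\mathfrak c$ of Proposition \ref{basic-cov} and does not bear on the claim. The fix is simply to keep your first paragraph, apply it to a covering of $Y$ rather than of $X$, and state the conclusion as $\cov(X)\le\cov(Y)$.
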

	\begin{proof}If $0\not=y^*\in Y^*$, then $T^*(y^*)\not=0$ because the range of $T$ is dense in $Y$, so
		a covering of $Y$ by hyperplanes induces a covering of $X$ by hyperplanes which is of the same cardinality
		which proves $\cov(X)\leq \cov(Y)$.
	\end{proof}
	
	\begin{lemma}\label{bound-of-cov} For every nonseparable Banach space $X$ there
		is a linear bounded operator $T: X\rightarrow \ell_\infty(\omega_1)$ with nonseparable range.
		In particular, all values of
		the cardinal characteristic $\cov$ on nonseparable Banach spaces are bounded  by the values
		on  nonseparable subspaces of  $\ell_\infty(\omega_1)$.
	\end{lemma}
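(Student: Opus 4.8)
The statement claims that every nonseparable Banach space $X$ admits a bounded linear operator $T:X\to\ell_\infty(\omega_1)$ whose range is nonseparable, and deduces that $\cov(X)$ on nonseparable spaces is controlled by the values of $\cov$ on nonseparable subspaces of $\ell_\infty(\omega_1)$. The plan is to construct $T$ coordinatewise: I would first extract from $X$ a transfinite sequence of norm-one vectors together with associated functionals that ``separate'' them, and then let $T$ record the values of these functionals.

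\emph{Step 1: extract a long biorthogonal-like sequence.} Since $X$ is nonseparable, a standard transfinite recursion (using only the Hahn--Banach theorem, no biorthogonal system hypothesis) produces norm-one vectors $(x_\alpha)_{\alpha<\omega_1}$ and norm-one functionals $(x_\alpha^*)_{\alpha<\omega_1}$ with $x_\alpha^*(x_\alpha)=1$ and $x_\alpha^*(x_\beta)=0$ for $\beta<\alpha$; one obtains $x_\alpha$ outside $\overline{\lin}\{x_\beta:\beta<\alpha\}$ because that closed span is separable hence proper, and then one picks $x_\alpha^*$ by Hahn--Banach separating $x_\alpha$ from that subspace. (In fact one only needs the weaker ``lower triangular'' condition, which is all that is used below.)

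\emph{Step 2: define $T$ and check boundedness.} Set $T(x)=\bigl(x_\alpha^*(x)\bigr)_{\alpha<\omega_1}$. Since each $\|x_\alpha^*\|=1$, we have $\|T(x)\|_\infty\le\|x\|$, so $T:X\to\ell_\infty(\omega_1)$ is a well-defined bounded linear operator of norm at most $1$.

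\emph{Step 3: the range is nonseparable.} The key point is that $\|T(x_\alpha)-T(x_\beta)\|_\infty\ge 1$ for all $\alpha\ne\beta$: if say $\beta<\alpha$, then the $\alpha$-th coordinate of $T(x_\alpha)-T(x_\beta)$ is $x_\alpha^*(x_\alpha)-x_\alpha^*(x_\beta)=1-0=1$. Hence $\{T(x_\alpha):\alpha<\omega_1\}$ is an uncountable $1$-separated subset of the range, so the range cannot be separable.

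\emph{Step 4: the ``in particular''.} Let $Z=\overline{T(X)}\subseteq\ell_\infty(\omega_1)$, a nonseparable Banach space by Step 3, and $T$ has dense range in $Z$. By Lemma \ref{cov-monotone}, $\cov(Z)\le\cov(X)$; equivalently $\cov(X)\ge\cov(Z)$, and $Z$ is a nonseparable subspace of $\ell_\infty(\omega_1)$. Thus any upper bound for $\cov$ valid on all nonseparable subspaces of $\ell_\infty(\omega_1)$ yields the same bound for $\cov(X)$, as claimed. I do not anticipate a serious obstacle here; the only mild care needed is in Step 1 to ensure at successor \emph{and} limit stages that $\overline{\lin}\{x_\beta:\beta<\alpha\}$ is a proper (separable) subspace, which holds since a span of countably many vectors is separable while $X$ is not.
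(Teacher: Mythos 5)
Your construction is correct, but it takes a genuinely different route from the paper. The paper embeds $X$ isometrically into $C(B_{X^*})\subseteq\ell_\infty(B_{X^*})$, extracts an uncountable $\varepsilon$-separated set $D\subseteq X$, notes that the separation of $D$ is already witnessed by a set $A$ of only $\omega_1$ many coordinates, and takes the restriction operator $X\to\ell_\infty(A)$; the image of $D$ stays $\varepsilon$-separated, so the range is nonseparable. You instead assemble $T$ coordinatewise from a transfinite Hahn--Banach construction of a long lower-triangular biorthogonal-like sequence $(x_\alpha,x_\alpha^*)_{\alpha<\omega_1}$. Both arguments are elementary; yours is more self-contained (no detour through the canonical embedding) and produces coordinates that are themselves functionals on $X$, which fits naturally with Lemma \ref{equiv-cov-omega1} and Proposition \ref{fund-biort}, while the paper's is a purely metric argument about discrete sets that avoids transfinite recursion.

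Two points deserve care. First, in Step 1 you cannot in general arrange $\|x_\alpha\|=\|x_\alpha^*\|=1$ \emph{and} $x_\alpha^*(x_\alpha)=1$ exactly, since the functional need not attain its norm; but the Riesz lemma lets you pick $\|x_\alpha\|=1$ at distance at least $1/2$ from $\overline{\lin\{x_\beta:\beta<\alpha\}}$ and then $x_\alpha^*$ of norm at most $2$ vanishing on that span with $x_\alpha^*(x_\alpha)=1$. Note that Step 3 really uses a uniform lower bound on the diagonal values $x_\alpha^*(x_\alpha)$, not only the lower-triangular zeros, so your parenthetical remark that only the lower-triangular condition is needed is not quite accurate; the normalization matters. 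Second, in Step 4 you quote Lemma \ref{cov-monotone} as giving $\cov(Z)\leq\cov(X)$ and then conclude as if you had $\cov(X)\leq\cov(Z)$. The inequality you actually need --- and the one established in the proof of Lemma \ref{cov-monotone} by pulling back a hyperplane cover of $Z$ along the dense-range operator $T$ --- is $\cov(X)\leq\cov(Z)$; the printed statement of that lemma has the inequality reversed. Your final conclusion is the intended one, but as written the deduction in Step 4 does not follow from the inequality you state, so you should correct the direction there.
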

	\begin{proof}  Every Banach space is isometric to a subspace of $C(K)\subseteq \ell_\infty(K)$,
		where $K=B_{X^*}$. So we may assume that $X\subseteq \ell_\infty(\kappa)$ for some uncountable cardinal
		$\kappa$. As $X$ is nonseparable, it contains an uncountable discrete set $D$. This fact is witnessed by
		the coordinates from some set $A\subseteq\kappa$ of cardinality $\omega_1$.  That is 
		there exist $\varepsilon>0$ such that for every distinct $d, d'\in D$ we have 
		is $|d(\alpha)-d'(\alpha)|>\varepsilon$ for some  $\alpha\in A$. Consider
		the restriction operator $R: X\rightarrow \ell_\infty(A)$. It is clear that the range is
		nonseparable by the choice of $A$. To
		conclude the last part of the lemma take any nonseparable Banach space $X$ and consider
		the operator $T$ as in the first part of the lemma and let $Y$ be 
		the closure of the range of $T$. Using Lemma \ref{cov-monotone}
		we conclude that $\cov(X)\leq \cov(Y)$.
	\end{proof}
	
	Let us now prove a simple but useful: 
	
	\begin{lemma}\label{equiv-cov-omega1} Let $X$ be a  Banach space. The following conditions are
		equivalent:
		\begin{enumerate}
			\item $\cov(X)=\omega_1$.
			\item $X$ is a union of $\omega_1$ hyperplanes.
			\item There is $A\subseteq X^*\setminus\{0\}$ of cardinality $\omega_1$
			such that for every $x\in X$ there is $x^*\in A$ such that $x^*(x)=0$.
			\item There is a bounded linear operator $T:X\rightarrow\ell_\infty(\omega_1)$
			such that 
			\begin{enumerate}
				\item for every $\alpha<\omega_1$ there is $x\in X$ such that $T(x)(\alpha)\not=0$.
				\item for every $x\in X$ there is $\alpha<\omega_1$ such that $T(x)(\alpha)=0$.
			\end{enumerate}
		\end{enumerate}
	\end{lemma}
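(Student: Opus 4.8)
The plan is to prove the four equivalences by establishing a cycle of implications, namely $(1)\Rightarrow(2)\Rightarrow(3)\Rightarrow(4)\Rightarrow(1)$, together with the already-available observation that each of (2), (3), (4) is a natural reformulation of covering $X$ by $\omega_1$ hyperplanes. Since by Proposition~\ref{basic-cov} we always have $\cov(X)\geq\omega_1$, the equality $\cov(X)=\omega_1$ is equivalent to the mere \emph{existence} of a covering of $X$ by $\omega_1$-many sets from $\I(X)$; and since each set in $\I(X)$ is contained in a countable union of hyperplanes, refining such a covering shows it is equivalent to a covering by $\omega_1$-many hyperplanes. This gives $(1)\Leftrightarrow(2)$ essentially for free.

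For $(2)\Rightarrow(3)$: if $X=\bigcup_{\alpha<\omega_1}H_\alpha$ with each $H_\alpha\in\h(X)$, then by the standard fact recalled in Lemma~\ref{inclusion} (and its proof via \cite{semadeni}) each $H_\alpha=\ker(x_\alpha^*)$ for some nonzero $x_\alpha^*\in X^*$. Put $A=\{x_\alpha^*:\alpha<\omega_1\}\subseteq X^*\setminus\{0\}$; this set has cardinality at most $\omega_1$, and we may pad it to have cardinality exactly $\omega_1$ by adding further nonzero functionals (this changes nothing since $X$ has dimension bigger than $1$, so $X^*$ is infinite-dimensional and such functionals exist). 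For every $x\in X$ there is $\alpha$ with $x\in H_\alpha$, i.e.\ $x_\alpha^*(x)=0$, giving (3). Conversely $(3)\Rightarrow(2)$ is immediate by setting $H_\alpha=\ker(x_\alpha^*)$.

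For $(3)\Rightarrow(4)$: given $A=\{x_\alpha^*:\alpha<\omega_1\}\subseteq X^*\setminus\{0\}$ as in (3), we may rescale so that $\|x_\alpha^*\|\leq 1$ for all $\alpha$, and define $T:X\to\ell_\infty(\omega_1)$ by $T(x)(\alpha)=x_\alpha^*(x)$. This is well-defined since $\|T(x)\|_\infty=\sup_\alpha|x_\alpha^*(x)|\leq\|x\|$, hence bounded and linear. Property (4)(b) is exactly the hypothesis of (3); property (4)(a) holds because each $x_\alpha^*$ is a nonzero functional, so there is $x\in X$ with $T(x)(\alpha)=x_\alpha^*(x)\neq0$. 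For the final arc $(4)\Rightarrow(1)$: given such a $T$, set $x_\alpha^*=T^*(e_\alpha^*)$ where $e_\alpha^*\in\ell_\infty(\omega_1)^*$ is the evaluation at $\alpha$; equivalently just note that $x\mapsto T(x)(\alpha)$ is a bounded linear functional on $X$, nonzero by (4)(a). Then $\{x_\alpha^*:\alpha<\omega_1\}$ witnesses (3), hence (2), hence $X$ is covered by $\omega_1$ hyperplanes and $\cov(X)\leq\omega_1$; combined with $\cov(X)\geq\omega_1$ we get (1). I do not expect any genuine obstacle here: every implication is a one-line translation between kernels of functionals, coordinates of an $\ell_\infty(\omega_1)$-valued operator, and covering families. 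The only points requiring a word of care are (i) ensuring the index set has cardinality exactly $\omega_1$ rather than just at most $\omega_1$ (handled by padding, using $\dim X>1$), and (ii) the rescaling needed so that the coordinate functionals assemble into a genuinely bounded $\ell_\infty(\omega_1)$-valued operator.
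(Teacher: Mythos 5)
Your proposal is correct and follows essentially the same route as the paper, which also treats the equivalence of (1)--(3) as immediate and passes between (3) and (4) via the coordinate functionals $x\mapsto T(x)(\alpha)$, i.e.\ $x_\alpha^*=T^*(\delta_\alpha)$, after normalizing the functionals. The only blemish is your justification for padding in $(2)\Rightarrow(3)$: $X^*$ need not be infinite-dimensional when $\dim X$ is finite, but since $\dim X>1$ the set $X^*\setminus\{0\}$ still has cardinality $\mathfrak c\geq\omega_1$ (and in fact Baire category already forces $\omega_1$ distinct hyperplanes in any cover), so the conclusion stands.
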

	\begin{proof}
		The equivalence of the first three items is clear. Assume (3) and let us prove (4).
		Let $\{H_\alpha: \alpha<\omega_1\}$ be the hyperplanes that cover $X$ and
		let $x_\alpha^*\in X^*$ be such that $H_\alpha$ is the kernel of $x^*_\alpha$ and
		$\|x_\alpha^*\|=1$ for
		all $\alpha<\omega_1$. Let $T(x)(\alpha)=x_\alpha^*(x)$. Condition (a) follows from the fact that $x_\alpha^*\not=0$ and condition (b) from the fact that $H_\alpha$s cover $X$.

		Now assume (4) and let us prove (3). Condition (a) implies that $x_\alpha^*=T^*(\delta_{\alpha})$
		is a nonzero element of $X^*$, and so its kernel is a hyperplane. Condition (b) implies that
		the kernels of $x_\alpha^*$s cover $X$.
	\end{proof}

	\begin{proposition}\label{fund-biort} Let $X$ be a nonseparable Banach space.
		Each of the following sentences implies the next.
		\begin{enumerate}
			\item $X$ admits a fundamental biorthogonal system.
			\item There is a bounded linear operator $T: X\rightarrow c_0(\omega_1)$ with
			nonseparable range (i.e., $X$ is not half-pcc in the terminology of \cite{djp}).
			\item $\cov(X)=\omega_1$.
		\end{enumerate}
		In particular for every nonseparable WLD Banach space $X$ we have $\cov(X)=\omega_1$.
	\end{proposition}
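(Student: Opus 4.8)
The plan is to prove $(1)\Rightarrow(2)$ and $(2)\Rightarrow(3)$ in turn, and then deduce the WLD statement from the known fact that every WLD space carries a fundamental biorthogonal system. For $(1)\Rightarrow(2)$, let $(x_i,x_i^*)_{i\in I}$ be a fundamental biorthogonal system for $X$; since $X$ is nonseparable, $I$ is uncountable. First I would pass to a subsystem with uniformly bounded functionals: splitting an $\omega_1$-sized subset of $I$ according to $\lceil\|x_i^*\|\rceil\in\N$ and using that $\omega_1$ is regular, we obtain $J\subseteq I$ with $|J|=\omega_1$ and $M:=\sup_{j\in J}\|x_j^*\|<\infty$. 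Identifying $J$ with $\omega_1$, define $T\colon X\to\ell_\infty(\omega_1)$ by $T(x)(j)=x_j^*(x)$ for $j\in J$; then $\|T(x)\|_\infty\le M\|x\|$, so $T$ is bounded. By biorthogonality $T(x_i)(j)=\delta_{i,j}$ for all $i\in I$ and $j\in J$, so $T$ carries each vector of $\lin\{x_i:i\in I\}$ into the finitely supported functions $c_{00}(J)\subseteq c_0(\omega_1)$; since this span is dense in $X$ (fundamentality) and $c_0(\omega_1)$ is closed in $\ell_\infty(\omega_1)$, continuity forces $T[X]\subseteq c_0(\omega_1)$. Finally $T(x_j)=e_j$ for $j\in J$, so the range of $T$ contains the uncountable $1$-separated set $\{e_j:j\in J\}$ and is nonseparable.

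For $(2)\Rightarrow(3)$, suppose $T\colon X\to c_0(\omega_1)$ is bounded with nonseparable range and put $S=\{\alpha<\omega_1:T(x)(\alpha)\neq0 \text{ for some }x\in X\}$. If $S$ were countable, $T[X]$ would sit inside an isometric copy of the separable space $c_0(S)$, contrary to hypothesis, so $|S|=\omega_1$. For $\alpha\in S$ set $x_\alpha^*=T^\st(e_\alpha)\in X^*$, with $e_\alpha$ the $\alpha$-th coordinate functional in $c_0(\omega_1)^\st=\ell_1(\omega_1)$; then $x_\alpha^*(x)=T(x)(\alpha)$, so $x_\alpha^*\neq0$ precisely because $\alpha\in S$. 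Since every $T(x)$ has countable support while $S$ is uncountable, for each $x\in X$ there is $\alpha\in S$ with $x_\alpha^*(x)=0$, so the at most $\omega_1$ many hyperplanes $\ker x_\alpha^*$ ($\alpha\in S$) cover $X$. Because $\I(X)$ is a $\sigma$-ideal (Baire) we also have $\cov(X)\ge\omega_1$, hence $\cov(X)=\omega_1$; this is condition (3) of Lemma~\ref{equiv-cov-omega1}. (Alternatively one can invoke Lemma~\ref{cov-monotone} to reduce to a nonseparable subspace of $c_0(\omega_1)$ and run the coordinate argument there.) For the last assertion, every WLD Banach space admits a fundamental biorthogonal system, namely a Markushevich basis (see \cite{wld}), so $(1)$ applies and $\cov(X)=\omega_1$ for nonseparable WLD $X$.

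The only place that needs genuine care is $(1)\Rightarrow(2)$: the operator has to be bounded, has to land in $c_0(\omega_1)$ rather than merely in $\ell_\infty(\omega_1)$, and still must have nonseparable range. Passing to a subsystem with $\sup_{j\in J}\|x_j^*\|<\infty$ takes care of boundedness, fundamentality (density of the linear span) is exactly what confines the range to $c_0(\omega_1)$, and the images $e_j$ of the biorthogonal vectors supply the nonseparability automatically, so the three requirements are compatible.
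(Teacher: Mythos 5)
Your proof is correct and follows essentially the same route as the paper: the coordinate operator $T(x)(\alpha)=x_\alpha^*(x)$ landing in $c_0(\omega_1)$ for $(1)\Rightarrow(2)$, and for $(2)\Rightarrow(3)$ the observation that uncountably many nonzero coordinate functionals must each vanish on any vector with countable support, which is exactly condition (4) of Lemma \ref{equiv-cov-omega1}. The only substantive difference is your pigeonhole step securing $\sup_{j\in J}\|x_j^*\|<\infty$: the paper defines $T$ from the given system without addressing boundedness, so your version is the more careful one (an equivalent standard fix is to rescale the system to $(\|x_\alpha^*\|\,x_\alpha,\ x_\alpha^*/\|x_\alpha^*\|)$, which preserves biorthogonality and fundamentality).
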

	\begin{proof}
		Let $\{(x_\alpha, x^*_\alpha): \alpha<\kappa\}$ be a fundamental
		biorthogonal system.  Define $T: X\rightarrow \ell_\infty(\omega_1)$ 
		by $T(x)(\alpha)=x_\alpha^*(x)$. As $T(x_\alpha)=1_{\{\alpha\}}|\omega_1\in c_0(\omega_1)$
		and $X$ is the closure of the linear span of $\{x_\alpha: \alpha<\omega_1\}$
		we conclude that $T[X]\subseteq c_0(\omega_1)$. $T(x_\alpha)=1_{\{\alpha\}}$
		for $\alpha<\omega_1$ witnesses the fact that the range is nonseparable.
		
		Now assume (2). As the range of $T$ is nonseparable, by passing to
		an uncountable set of coordinates we may assume that for all
		$\alpha<\omega_1$ there is $x\in X$ such that $T(x)(\alpha)\not=0$.
		So item (4) of Lemma \ref{equiv-cov-omega1} is satisfied, and hence $\cov(X)=\omega_1$.
		
		To make the final observation use the fact that WLD
		Banach spaces admit fundamental biorthogonal systems e.g. by the results of \cite{zizler}.
	\end{proof}
	
	Note that the paper \cite{djp} contains many results on properties of Banach spaces $X$ which imply
	item (2) of Lemma \ref{fund-biort}, for example this happens when $X^*$ contains a nonmetrizable
	weakly compact subset. To obtain more Banach spaces $X$ satisfying $\cov(X)$ 
	we need some topological considerations. First recall the following:
	
	\begin{definition}\label{def-small-diagonal} Let $K$ be a compact Hausdorff space.
		\begin{enumerate}
			\item We say that $K$ has small diagonal if
			for every uncountable subset $A$ of $K^2\setminus\Delta(K)$ there
			is an uncountable $B\subseteq A$ whose closure is disjoint from $\Delta(K)$.
			\item We say that $K$ has countable tightness (is countably determined) if whenever $K\ni x\in\overline{A}$
			for $A\subseteq K$, then there is a countable $B\subseteq A$ such that $x\in\overline{B}$.
		\end{enumerate}
	\end{definition}
	
	In the following lemma the implication from (3) to (4) is the result of \cite{juhasz-szentmiklossy}.
	
	\begin{lemma}\label{small-diagonal} Suppose that $K$ is a compact Hausdorff space.
		Each of the following sentences implies the next.
		\begin{enumerate}
			\item For every $A\subseteq K$ of cardinality $\omega_1$ there is  a continuous $f: K\rightarrow \R$ such that $f|A$ is injective.
			\item For every $A=\{(x_\alpha, y_\alpha): \alpha<\omega_1\}\subseteq K^2\setminus\Delta(K)$ of 
			cardinality $\omega_1$ there  is  a continuous
			$f: K\rightarrow \R$ such that 
			$$\{\alpha: f(x_\alpha)\not=f( y_\alpha)\}$$
			is uncountable. 
			\item $K$ has small diagonal. 
			\item $K$ is countably tight.
		\end{enumerate}
	\end{lemma}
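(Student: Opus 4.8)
The plan is to prove the two substantive implications, (1)$\Rightarrow$(2) and (2)$\Rightarrow$(3), since (3)$\Rightarrow$(4) is the cited theorem of \cite{juhasz-szentmiklossy}. Throughout I would use that a compact Hausdorff space is completely regular, so that $C(K)$ separates points.

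For (1)$\Rightarrow$(2): given $A=\{(x_\alpha,y_\alpha):\alpha<\omega_1\}\subseteq K^2\setminus\Delta(K)$, I would look at $B=\{x_\alpha:\alpha<\omega_1\}\cup\{y_\alpha:\alpha<\omega_1\}\subseteq K$ and split into two cases according to $|B|$. If $B$ is countable, the map $\alpha\mapsto(x_\alpha,y_\alpha)$ takes values in the countable set $B\times B$, so by pigeonhole some pair $(x,y)$ with $x\neq y$ is attained for uncountably many $\alpha$; any $f\in C(K)$ with $f(x)\neq f(y)$ then witnesses (2). If $|B|=\omega_1$, apply hypothesis (1) to $B$ to obtain $f\in C(K)$ with $f|B$ injective; since $x_\alpha\neq y_\alpha$ and both lie in $B$, we get $f(x_\alpha)\neq f(y_\alpha)$ for \emph{every} $\alpha<\omega_1$.

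For (2)$\Rightarrow$(3): let $A\subseteq K^2\setminus\Delta(K)$ be uncountable; passing to a subset, assume $|A|=\omega_1$ and fix an injective enumeration $A=\{(x_\alpha,y_\alpha):\alpha<\omega_1\}$. Apply (2) to get $f\in C(K)$ with $U=\{\alpha:f(x_\alpha)\neq f(y_\alpha)\}$ uncountable. The key idea is to push everything down to $\R^2$ via the continuous map $g\colon K^2\to\R^2$, $g(x,y)=(f(x),f(y))$. Since $x=y$ implies $f(x)=f(y)$, we have $\Delta(K)\subseteq g^{-1}(\Delta(\R))$, so it is enough to find an uncountable $B\subseteq\{(x_\alpha,y_\alpha):\alpha\in U\}$ with $\overline{g(B)}$ disjoint from the diagonal $\Delta(\R)$ of $\R^2$; then $g(\overline B)\subseteq\overline{g(B)}$ forces $\overline B\cap\Delta(K)=\emptyset$. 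Cover $\R^2\setminus\Delta(\R)$ by the countably many closed sets $C_n=\{(s,t):|s-t|\geq 1/n\}$. For each $\alpha\in U$ the point $g(x_\alpha,y_\alpha)$ lies in some $C_n$, so by pigeonhole there is a fixed $n$ and an uncountable $U'\subseteq U$ with $g(x_\alpha,y_\alpha)\in C_n$ for all $\alpha\in U'$. Then $B=\{(x_\alpha,y_\alpha):\alpha\in U'\}$ satisfies $\overline{g(B)}\subseteq C_n$, which is disjoint from $\Delta(\R)$, and $B$ is uncountable by injectivity of the enumeration, which is exactly what the definition of small diagonal requires.

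The routine points — complete regularity of $K$, that $g(\overline B)\subseteq\overline{g(B)}$, and that each $C_n$ is closed and misses $\Delta(\R)$ while $\bigcup_n C_n=\R^2\setminus\Delta(\R)$ — I would leave implicit. The one genuine idea is the reduction to $\R^2$ in (2)$\Rightarrow$(3), exploiting that the diagonal of $\R$, unlike that of an arbitrary $K$, is exhausted from outside by a countable increasing family of closed sets; once this is in place the argument is pure bookkeeping and I expect no serious obstacle.
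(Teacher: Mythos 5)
Your proof is correct and takes essentially the same route as the paper: (1)$\Rightarrow$(2) by applying (1) to the set of all coordinates of the pairs, and (2)$\Rightarrow$(3) by extracting an uncountable subfamily on which $|f(x_\alpha)-f(y_\alpha)|$ is bounded away from $0$ (the paper works with the single function $g(x,y)=|f(x)-f(y)|$ and an $\varepsilon>0$ where you use the sets $C_n$ in $\R^2$, but it is the same pigeonhole). The only cosmetic difference is your countable-$B$ case in (1)$\Rightarrow$(2), which is vacuous since $|A|=\omega_1$ forces the coordinate set to be uncountable.
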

	\begin{proof}
		Assume (1).
		Let $A\subset K^2$ be a set of cardinality $\omega_1$ disjoint from the diagonal.
		Let $A=\{(x_\alpha,y_\alpha):\alpha<\omega_1\}$. 
		Put $L=\{x_\alpha, y_\alpha: \alpha<\omega_1\}$ and let $f:K\rightarrow \R$ be continuous and  $f|L$  injective. Then $f(x_\alpha)\not=f( y_\alpha)$ for each $\alpha<\omega_1$, so we obtain (2).
		
		Assume (2). Let $A\subset K^2$ be uncountable. 
		We may assume that $A$ is   of cardinality $\omega_1$ and so  $A=\{(x_\alpha,y_\alpha):\alpha<\omega_1\}$.
		Let  $f: K\rightarrow \R$  be continuous  and  such that 
		$$\{\alpha: f(x_\alpha)\not=f( y_\alpha)\}$$
		is uncountable. 
		Then $g:K^2\rightarrow \R$ defined by $g(x,y)=|f(x)-f(y)|$ is continuous and 
		$g|A$ is non-zero. Hence there exist $\varepsilon>0$ and an uncountable subset $A'\subseteq A$ such that $g(a)>\varepsilon$ for $a\in A'$. It follows that the closure of $A'$ is disjoint from diagonal as $g|\Delta(K)=0$ which completes the proof of (3). 
		
		For the last implication see the proof of \cite[Corollary 2.3]{juhasz-szentmiklossy}.
	\end{proof}
	
	It is easy to see that the one-point compactification of an uncountable discrete space is countably tight but does not have small diagonal, so (4) does not imply (3).  We do not know if the other implications reverse (cf. Question \ref{compact-omega1}).
	
	\begin{lemma}\label{dualball-diagonal} Let $X$ be a Banach space. Each of the following sentences implies the next.
		\begin{enumerate}
			\item The dual ball $B_{X^*}$ does not have small diagonal in the weak$^*$ topology.
			\item There is $\{x^*_\alpha: \alpha<\omega_1\}\subseteq B_{X^*}\setminus \{0\}$
			such that $\{\alpha: x^*_\alpha(x)\not=0\}$ is at most countable for each $x\in X$.
			\item There is $A\subseteq B_{X^*}$ of cardinality $\omega_1$ such that $\delta_x|A$ is not injective for each $x\in X$, where $\delta_x\in C(B_{X^*})$ is given by $\delta_x(x^*)=x^*(x)$.
			\item $\cov(X)=\omega_1$.
		\end{enumerate}
	\end{lemma}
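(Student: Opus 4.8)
The plan is to prove the chain $(1)\Rightarrow(2)\Rightarrow(3)\Rightarrow(4)$ one implication at a time, working throughout in the weak$^*$ topology on $B_{X^*}$ and using that each evaluation $\delta_x$ is weak$^*$-continuous and that $\omega_1$ is not a countable union of countable sets; the last implication will fall out of Lemma~\ref{equiv-cov-omega1}.

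For $(1)\Rightarrow(2)$ I would fix, from the failure of small diagonal, an uncountable $A\subseteq B_{X^*}^2\setminus\Delta(B_{X^*})$ no uncountable subset of which has weak$^*$-closure disjoint from $\Delta(B_{X^*})$, and shrink it to $A=\{(y^*_\alpha,z^*_\alpha):\alpha<\omega_1\}$ (harmless, since uncountable subsets of the shrunk set are uncountable subsets of the original). Set $x^*_\alpha=(y^*_\alpha-z^*_\alpha)/2\in B_{X^*}\setminus\{0\}$. If some $x\in X$ had $\{\alpha:x^*_\alpha(x)\neq0\}$ uncountable, then since this set equals $\bigcup_n\{\alpha:|x^*_\alpha(x)|>1/n\}$, some $B_n=\{\alpha:|x^*_\alpha(x)|>1/n\}$ is uncountable; the weak$^*$-continuous function $g(u^*,v^*)=|u^*(x)-v^*(x)|$ on $B_{X^*}^2$ satisfies $g(y^*_\alpha,z^*_\alpha)=2|x^*_\alpha(x)|>2/n$ for $\alpha\in B_n$, hence $g\geq2/n$ on the weak$^*$-closure of $\{(y^*_\alpha,z^*_\alpha):\alpha\in B_n\}$, which therefore misses $\Delta(B_{X^*})$ (where $g=0$) --- contradicting the choice of $A$. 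Thus $\{\alpha:x^*_\alpha(x)\neq0\}$ is countable for every $x\in X$, which is (2).

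For $(2)\Rightarrow(3)$ I would first observe that each nonzero value taken by the $x^*_\alpha$ is taken only countably often (if $x^*(x)\neq0$ then $\{\alpha:x^*_\alpha=x^*\}\subseteq\{\alpha:x^*_\alpha(x)\neq0\}$), so that $A=\{x^*_\alpha:\alpha<\omega_1\}$ has cardinality $\omega_1$, is contained in $B_{X^*}\setminus\{0\}$, and satisfies that $\{p\in A:p(x)\neq0\}$ is countable for every $x$ (being the image of a countable set of indices); hence for each $x$ there are at least two distinct $p\in A$ with $\delta_x(p)=p(x)=0$, i.e.\ $\delta_x|A$ is not injective. For $(3)\Rightarrow(4)$ I would put $A'=\{p-q:p,q\in A,\ p\neq q\}\subseteq X^*\setminus\{0\}$, of size $\leq\omega_1$; non-injectivity of $\delta_x|A$ yields distinct $p,q\in A$ with $(p-q)(x)=0$, so every $x\in X$ is annihilated by some member of $A'$, and padding $A'$ with arbitrary nonzero functionals (there are $\geq\mathfrak c$ of them since $\dim X>1$) up to cardinality exactly $\omega_1$ lets me apply the equivalence of (1) and (3) in Lemma~\ref{equiv-cov-omega1} to conclude $\cov(X)=\omega_1$.

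The delicate point is $(1)\Rightarrow(2)$: I must remember to rescale the differences $y^*_\alpha-z^*_\alpha$ so that they stay inside $B_{X^*}$ rather than in $2B_{X^*}$, and --- more essentially --- I must upgrade ``$x^*_\alpha(x)\neq0$ for uncountably many $\alpha$'' to a uniform lower bound $|x^*_\alpha(x)|>1/n$ on an uncountable set before running the continuity argument, since only a strictly positive constant survives passage to the weak$^*$-closure and thereby contradicts the failure of small diagonal. The remaining two implications are routine cardinal bookkeeping together with a direct appeal to Lemma~\ref{equiv-cov-omega1}.
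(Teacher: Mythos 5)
Your proof is correct and follows essentially the same route as the paper: the first implication is the contrapositive of the implication from (2) to (3) in Lemma~\ref{small-diagonal} applied to the functions $\delta_x$ (which you re-prove inline via the uniform lower bound $|x^*_\alpha(x)|>1/n$ on an uncountable set of indices, exactly as in that lemma), and the remaining implications use differences of elements of $A$ together with Lemma~\ref{equiv-cov-omega1}, as in the paper. Your two extra touches --- rescaling $y^*_\alpha-z^*_\alpha$ by $1/2$ so that the functionals land in $B_{X^*}$, and checking that $\{x^*_\alpha:\alpha<\omega_1\}$ really has cardinality $\omega_1$ because each functional can recur only countably often --- are correct refinements of points the paper passes over silently.
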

	\begin{proof} Suppose (1). By the implication from (2) to (3)
		of Lemma \ref{small-diagonal} there
		is $A=\{(y_\alpha^*, z_\alpha^*): \alpha<\omega_1\}\subseteq B_{X^*}^2\setminus\Delta(B_{X^*})$ of 
		cardinality $\omega_1$ such that for every continuous
		$f: K\rightarrow \R$ the set 
		$\{\alpha: f(y_\alpha^*)\not=f( z_\alpha^*)\}$
		is countable.  Of course $x\in X$ defines a continuous function on the dual ball
		in the weak$^*$ topology so $\{\alpha: (y_\alpha^*-z_\alpha^*)(x)\}$ is countable
		for all $x\in X$. So we put $x_\alpha^*=y_\alpha^*-z^*_\alpha$ and we obtain (2).
		
		Assume (2). Put $A=\{x^*_\alpha: \alpha<\omega_1\}$. Then for each $x\in X$ image of $\delta_x|A$ is countable, so $\delta_x|A$ is not injective since $|A|> \omega$. 
		
		Now suppose (3). Consider $\mu_\alpha=x_\alpha-y_\alpha\in X^*$, where
		$\{\{x_\alpha, y_\alpha\}: \alpha<\omega_1\}=[A]^2$. Then the kernels
		of $\mu_\alpha$'s cover $X$.
	\end{proof}
	
	Since it is consistent that all nonmetrizable compact spaces do not have small diagonals, it is also consistent that sentences (1)-(4) from Lemma \ref{dualball-diagonal} are equivalent in the class of nonseparable Banach spaces. It is still an open question whether this holds in {\sf ZFC}.
	
	\begin{proposition}\label{ell1-inside} If $X$ is a Banach space which contains an isomorphic copy 
		of $\ell_1(\omega_1)$, then $\cov(X)=\omega_1$. In particular this holds for
		any space which contains $\ell_\infty$ like $\ell_\infty(\kappa)$, $L_\infty(\{0,1\}^\kappa)$,
		$\ell_\infty/c_0$ etc.
	\end{proposition}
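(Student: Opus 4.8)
The plan is to apply Lemma \ref{dualball-diagonal}: it suffices to show that the dual ball $B_{X^*}$ fails to have small diagonal in the weak$^*$ topology, and by the implication $(3)\Rightarrow(4)$ of Lemma \ref{small-diagonal} (small diagonal implies countable tightness) it is in turn enough to show that $B_{X^*}$ is \emph{not} countably tight. To set this up, fix an isomorphic embedding $i\colon\ell_1(\omega_1)\to X$; after rescaling $i$ we may assume $\|i(x)\|\geq\|x\|$ for every $x\in\ell_1(\omega_1)$. Then the adjoint $i^*\colon X^*\to\ell_1(\omega_1)^*=\ell_\infty(\omega_1)$ is weak$^*$-to-weak$^*$ continuous, and by the Hahn--Banach theorem $i^*(B_{X^*})\supseteq B_{\ell_\infty(\omega_1)}$, since any $\psi\in B_{\ell_\infty(\omega_1)}$ gives, via $i^{-1}$, a norm $\leq1$ functional on $i[\ell_1(\omega_1)]\subseteq X$ which extends to a norm $\leq1$ functional on $X$.

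Next I would manufacture a witness of uncountable tightness in $B_{X^*}$ by lifting one living on $B_{\ell_\infty(\omega_1)}$. On $B_{\ell_\infty(\omega_1)}$ the weak$^*$ topology is that of pointwise convergence on $\ell_1(\omega_1)$, i.e.\ $B_{\ell_\infty(\omega_1)}$ is homeomorphic to $[-1,1]^{\omega_1}$. The constant function $1_{\omega_1}$ belongs to the weak$^*$ closure of the set $D=\{1_F:F\in[\omega_1]^{<\omega}\}$ of finitely supported $\{0,1\}$-valued functions (a basic neighbourhood of $1_{\omega_1}$ is cut out by finitely many $x_1,\dots,x_n\in\ell_1(\omega_1)$, whose supports are bounded by some $\gamma<\omega_1$ because $\cf(\omega_1)>\omega$, and then $1_{[0,\gamma]}$ lies in that neighbourhood), whereas $1_{\omega_1}$ does \emph{not} belong to the weak$^*$ closure of any countable $D_0\subseteq D$: the members of $D_0$ are supported in a fixed countable $C\subseteq\omega_1$, the set $\{g\in B_{\ell_\infty(\omega_1)}:g(\alpha)=0\text{ for all }\alpha\notin C\}$ is weak$^*$ closed, and $1_{\omega_1}$ is not in it. Now choose for each $F\in[\omega_1]^{<\omega}$ some $q_F\in B_{X^*}$ with $i^*(q_F)=1_F$, set $E=\{q_F:F\in[\omega_1]^{<\omega}\}$, and, using that $B_{X^*}$ is weak$^*$ compact and $i^*$ continuous, note that $i^*(\overline{E})$ is weak$^*$ closed and contains $D$, hence contains $1_{\omega_1}$; pick $p\in\overline{E}$ with $i^*(p)=1_{\omega_1}$.

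It then remains to check that $p$ witnesses uncountable tightness of $B_{X^*}$ through $E$. Indeed $p\in\overline{E}$, but if $E_0=\{q_{F_n}:n\in\N\}$ is any countable subset of $E$, then $i^*(\overline{E_0})\subseteq\overline{i^*(E_0)}$ is contained in the weak$^*$ closed set of functions vanishing off the countable set $C=\bigcup_n F_n$, a set not containing $1_{\omega_1}=i^*(p)$; hence $p\notin\overline{E_0}$. So $B_{X^*}$ is not countably tight, therefore does not have small diagonal, and Lemma \ref{dualball-diagonal} yields $\cov(X)=\omega_1$. For the final assertion it is enough that every space containing an isomorphic copy of $\ell_\infty$ contains an isomorphic copy of $\ell_1(\mathfrak{c})$, hence of $\ell_1(\omega_1)$: fixing an independent family $\{A_\xi:\xi<\mathfrak{c}\}$ of subsets of $\N$, the map sending the unit vector $e_\xi$ to $2\cdot 1_{A_\xi}-1_\N\in\ell_\infty$ extends to an isometric embedding of $\ell_1(\mathfrak{c})$ into $\ell_\infty$ (for a finite combination, realizing each sign pattern on some integer gives $\|\sum_\xi c_\xi(2\cdot 1_{A_\xi}-1_\N)\|_\infty=\sum_\xi|c_\xi|$); and $\ell_\infty(\kappa)$, $L_\infty(\{0,1\}^\kappa)$ and $\ell_\infty/c_0$ all contain isomorphic copies of $\ell_\infty$ by standard arguments.

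The step I expect to be the main obstacle --- or rather the point needing the most care --- is the lifting in the second paragraph: countable tightness is \emph{not} preserved under continuous surjections, so one cannot merely say that $[-1,1]^{\omega_1}$ is a continuous image of $B_{X^*}$ and conclude non-tightness of $B_{X^*}$. The non-tightness witness $(1_{\omega_1},D)$ must be pulled back explicitly, which forces the choice of $p$ as a cluster point of the $q_F$'s with the prescribed image $1_{\omega_1}$ rather than as an arbitrary preimage; everything else is routine.
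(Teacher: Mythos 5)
Your argument is correct in substance and follows the same skeleton as the paper's proof (show $B_{X^*}$ is not countably tight in the weak$^*$ topology, then invoke the implication $(3)\Rightarrow(4)$ of Lemma \ref{small-diagonal} and Lemma \ref{dualball-diagonal}), but you reach the non-tightness differently. The paper cites the main result of \cite{talagrand} to get a continuous surjection of $B_{X^*}$ onto $[0,1]^{\omega_1}$ and then uses the fact that countable tightness passes to continuous images, whereas you build the surjection yourself (via $i^*$ and Hahn--Banach, which is really just the easy direction of Talagrand's theorem) and pull the non-tightness witness back by hand. Your approach is more self-contained and elementary; the paper's is shorter at the cost of two black boxes. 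Two remarks. First, your stated motivation for the explicit pullback is mistaken: for compact Hausdorff spaces countable tightness \emph{is} preserved by continuous surjections, since such maps are closed and hence quotient maps, and tightness does not increase under quotient maps --- this is precisely what the paper relies on (and what your lifting argument re-proves in this special case). Second, there is a small slip in your verification that $1_{\omega_1}\in\overline{D}$: the function $1_{[0,\gamma]}$ is countably, not finitely, supported, so it does not belong to your $D=\{1_F:F\in[\omega_1]^{<\omega}\}$. The claim itself is true --- given $x_1,\dots,x_n\in\ell_1(\omega_1)$ and $\varepsilon>0$, choose a finite $F$ with $\sum_{\alpha\notin F}|x_j(\alpha)|<\varepsilon$ for each $j$ --- or, alternatively, replace $D$ by the initial-segment indicators $\{1_{[0,\alpha)}:\alpha<\omega_1\}$ (as the paper does inside $[0,1]^{\omega_1}$), for which your ``bounded support'' argument and the subsequent countable-subset argument go through verbatim. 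The rest, including the reduction of the $\ell_\infty$ case to $\ell_1(\mathfrak c)\hookrightarrow\ell_\infty$ via an independent family, is fine.
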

	\begin{proof} By the main result of \cite{talagrand}, if a Banach space $X$ contains  $\ell_1(\omega_1)$,
		then there is a continuous surjection $\Phi: B_{X^*}\rightarrow [0,1]^{\omega_1}$, 
		where $B_{X^*}$ is considered with the weak$^*$ topology. As countable tightness is preserved
		by continuous map and $[0,1]^{\omega_1}$ is not countably tight (consider
		$1_{[0,\omega_1)}\in \overline{\{1_{[0,\alpha)}: \alpha<\omega_1\}}$) we conclude that
		$B_{X^*}$ is considered with the weak$^*$ topology is not countably tight.
		By Lemma \ref{small-diagonal} $B_{X^*}$ does not have small diagonal, and so by Lemma \ref{dualball-diagonal}
		we conclude that $\cov(X)=\omega_1$.
	\end{proof}

	\begin{proposition}\label{scattered} If $K$ is compact nonmetrizable and scattered, then $\cov(C(K))=\omega_1$.
	\end{proposition}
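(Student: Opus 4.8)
The plan is to use the machinery already developed around small diagonal: by Lemma \ref{dualball-diagonal}, it suffices to show that $B_{C(K)^*}$, with the weak$^*$ topology, fails to have small diagonal whenever $K$ is compact, scattered and nonmetrizable. Since $K$ embeds into $B_{C(K)^*}$ via $x\mapsto\delta_x$ (a weak$^*$-homeomorphism onto its image, which is weak$^*$-closed), and small diagonal is inherited by closed subspaces, it is in turn enough to prove that a compact, scattered, nonmetrizable $K$ itself does not have small diagonal. This is the heart of the matter, so let me describe how I would attack it.

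First I would recall that a nonmetrizable scattered compact space $K$ must have uncountable Cantor--Bendixson height or else an uncountable level; in either case one extracts an uncountable subset $A=\{x_\alpha:\alpha<\omega_1\}\subseteq K$ together with a decreasing sequence of clopen neighbourhoods witnessing that the points are ``spread out'' in a strong sense. More concretely, in a scattered space every point is the limit of points of strictly lower rank, and nonmetrizability forces the weight (equivalently, since $K$ is scattered, the cardinality of a dense set of isolated points, or the size of some level) to be uncountable. I would aim to build by transfinite recursion an $\omega_1$-sequence of \emph{distinct} points $x_\alpha\in K$ and pairwise disjoint nonempty clopen sets $U_\alpha\ni x_\alpha$, using scatteredness to keep choosing isolated points of the relevant Cantor--Bendixson residuals. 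Then consider the set $A=\{(x_\alpha,x_\beta):\alpha\neq\beta<\omega_1\}\subseteq K^2\setminus\Delta(K)$, or rather a suitably chosen uncountable diagonal-avoiding subset of $\{x_\alpha:\alpha<\omega_1\}^2$; the point is to show that \emph{no} uncountable subsequence of the chosen pairs has closure disjoint from $\Delta(K)$, because scatteredness of $K$ (hence of $K^2$) forces any uncountable set in $K^2$ to have a condensation point that, by the way the $x_\alpha$ were chosen (each $x_\alpha$ in the closure of the tail $\{x_\beta:\beta>\alpha\}$, arranged so that a single point $x^*\in K$ is a complete accumulation point of $\{x_\alpha\}$), lies on $\Delta(K)$. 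Alternatively, and perhaps more cleanly, I would invoke the contrapositive of the (1)$\Rightarrow$(4) chain in Lemma \ref{small-diagonal}: if $K$ had small diagonal it would be countably tight, but a nonmetrizable scattered compactum is countably tight (scattered compacta are always countably tight, in fact sequentially compact), so that route fails and one genuinely needs a direct argument that small diagonal fails.

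So the correct strategy is to contradict small diagonal directly: assume $K$ is scattered, nonmetrizable, \emph{with} small diagonal, and derive that $K$ is metrizable. A scattered compact $K$ with small diagonal: I would try to show every such $K$ is first countable, and a first-countable scattered compact space of uncountable weight still... — here I would instead lean on the known theorem that a compact space with small diagonal in which every subspace has countable spread (or which is hereditarily Lindel\"of, etc.) is metrizable, combined with the fact that scattered + small diagonal implies the relevant smallness. The cleanest path: a scattered compact space has a dense set of isolated points of size equal to its weight; if this weight is $\geq\omega_1$ we get an uncountable discrete (hence relatively discrete) subset $D=\{d_\alpha:\alpha<\omega_1\}$ of isolated points, pick $d^*$ a complete accumulation point, and observe $A=\{(d_\alpha,d^*):\alpha<\omega_1\}$ witnesses the failure of small diagonal — every uncountable $B\subseteq A$ has $(d^*,d^*)\in\Delta(K)$ in its closure since $d^*$ accumulates to uncountably many of the $d_\alpha$.

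The main obstacle I anticipate is the very last point: ensuring that the chosen uncountable set of isolated points really does have a complete accumulation point $d^*$ to which an uncountable subfamily converges in the sense needed, i.e.\ that \emph{every} uncountable $B\subseteq A$ reaccumulates at $(d^*,d^*)$. A single $d^*$ need not be hit by all uncountable subsets; the fix is to not fix $d^*$ in advance but to argue that, given any uncountable $A'\subseteq K^2\setminus\Delta$ projecting to an uncountable set of isolated points, \emph{some} point of $K$ is a complete accumulation point of the projection and one passes to the uncountable sub-family converging to it, whose closure then meets $\Delta(K)$. Making this uniform — a single set $A$ of pairs witnessing failure of small diagonal — requires choosing $A$ so that the first coordinates are a set on which $K$ is ``concentrated'' on one point; a convergent $\omega_1$-sequence of isolated points (which exists in any nonmetrizable scattered compactum by a standard Cantor--Bendixson extraction) does exactly this, since \emph{every} uncountable subsequence of a convergent $\omega_1$-sequence still converges to the same limit. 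I expect the author's proof to carry out precisely this extraction of a non-trivial convergent $\omega_1$-sequence inside a scattered nonmetrizable compactum, then pair it with its limit, then quote Lemmas \ref{small-diagonal} and \ref{dualball-diagonal}.
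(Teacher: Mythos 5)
Your reduction to showing that $K$ itself fails to have a small diagonal is sound as far as it goes (the copy $\{\delta_x:x\in K\}$ of $K$ is weak$^*$-closed in $B_{C(K)^*}$, and a closed subspace of a compactum with small diagonal has small diagonal), but the step carrying all the weight --- that ``a convergent $\omega_1$-sequence of isolated points exists in any nonmetrizable scattered compactum by a standard Cantor--Bendixson extraction'' --- is not a standard fact and is not available in ZFC. A nonmetrizable scattered compactum need not have uncountably many isolated points at all: the hereditarily separable thin-very tall space of \cite{christina-vtt}, cited in this very paper, is (consistently) a nonmetrizable scattered compactum all of whose Cantor--Bendixson levels are countable and which has no uncountable discrete subspace, so there is no $\omega_1$-sequence of isolated points to extract, let alone a convergent one. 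More broadly, your strategy would prove outright that every nonmetrizable scattered compactum fails to have a small diagonal; the paper is deliberately agnostic about whether nonmetrizable compacta can have small diagonals (this is the open problem behind Theorem \ref{main-diagonal} and the remark after Lemma \ref{dualball-diagonal}), and nothing in your sketch isolates the scattered case in a way that circumvents that difficulty. Your ``cleanest path'' in the uncountable-level case also has the defect you yourself identify: for a fixed complete accumulation point $d^*$ of $D$, an uncountable $B\subseteq A=\{(d_\alpha,d^*):\alpha<\omega_1\}$ has closure $\overline{D'}\times\{d^*\}$ for the corresponding $D'\subseteq D$, which misses $\Delta(K)$ whenever $d^*\notin\overline{D'}$; the proposed repair again falls back on the unproved convergent-sequence claim.

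The actual proof sidesteps all of this by targeting the much weaker condition (3) of Lemma \ref{dualball-diagonal} instead of condition (1): it suffices to exhibit an uncountable $A\subseteq B_{X^*}$ on which no $\delta_x$ ($x\in X$) is injective. For $X=C(K)$ with $K$ scattered and nonmetrizable this is immediate: $K$ is uncountable (countable compacta are metrizable), and for every $f\in C(K)$ the image $f[K]$ is a compact scattered, hence countable, subset of $\R$, so $f$ cannot be injective on any uncountable $A\subseteq K$; now take $\{\delta_x:x\in A\}$. You should rework your argument around condition (3); the small-diagonal route requires input you do not have.
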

	\begin{proof} $K$ must be uncountable.
		Let $A\subseteq K$ be any subset of cardinality $\omega_1$.
		As a continuous image of a scattered compact space
		is scattered compact we conclude that 
		for any continuous
		$f:K\rightarrow \R$ the image of $f$ is countable
		and so $f|A$ is not injective which implies that $\delta_f|\{\delta_x:x\in A\}$ is not injective. Hence $C(K)$ satisfy condition (3) of Lemma \ref{dualball-diagonal}.
	\end{proof}

	\begin{proposition}\label{con-cov}$ $\begin{enumerate}
			\item {\sf PFA} implies that every nonseparable Banach space $X$ satisfies $\cov(X)=\omega_1$.
			\item It is consistent with any possible size of the continuum, that every nonseparable Banach space $X$ satisfies $\cov(X)=\omega_1$.
		\end{enumerate}
	\end{proposition}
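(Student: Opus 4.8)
The plan is to invoke Theorem~\ref{main-diagonal}, whose sole hypothesis --- that every compact Hausdorff space with small diagonal is metrizable --- is a purely topological statement known both to follow from {\sf PFA} and to be consistent with arbitrarily large continuum. Recall why Theorem~\ref{main-diagonal} does the job: for a nonseparable Banach space $X$ the dual ball $B_{X^*}$ in the weak$^*$ topology is a nonmetrizable compact Hausdorff space (it is metrizable exactly when $X$ is separable), so if every compact Hausdorff space with small diagonal is metrizable then $B_{X^*}$ does not have small diagonal, and hence $\cov(X)=\omega_1$ by Lemma~\ref{dualball-diagonal}. Thus both parts reduce to locating the appropriate instances in the set-theoretic topology literature, with no new set theory needed.

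For part (1): by \cite{juhasz-szentmiklossy} every compact Hausdorff space with small diagonal has countable tightness --- this is the implication $(3)\Rightarrow(4)$ of Lemma~\ref{small-diagonal} --- and by the results of \cite{dow-pavlov}, \cite{dow-countable-tightness}, {\sf PFA} implies that every compact Hausdorff space having both small diagonal and countable tightness is metrizable. Combining the two, {\sf PFA} implies that every compact Hausdorff space with small diagonal is metrizable, and then Theorem~\ref{main-diagonal} gives $\cov(X)=\omega_1$ for every nonseparable Banach space $X$.

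For part (2): I would quote from \cite{dow-pavlov} (resp.\ \cite{dow-countable-tightness}) the existence of models of {\sf ZFC} in which $\mathfrak c$ takes a prescribed value while every compact Hausdorff space with small diagonal is metrizable. Applying Theorem~\ref{main-diagonal} inside such a model produces a model with the prescribed continuum in which $\cov(X)=\omega_1$ holds for every nonseparable Banach space $X$. Since {\sf CH} already forces this conclusion (Proposition~\ref{basic-cov}), the value $\mathfrak c=\omega_1$ is covered as well, so the statement is consistent with every possible size of $\mathfrak c$.

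The only real work --- essentially bookkeeping --- is to check that the PFA statement and the consistency construction taken from \cite{dow-pavlov}, \cite{dow-countable-tightness} are formulated for the entire class of compact Hausdorff spaces (rather than some restricted subclass, e.g.\ of bounded weight) and over the full intended range of values of $\mathfrak c$; granting that, the Banach-space conclusion is immediate from Theorem~\ref{main-diagonal} and Lemma~\ref{dualball-diagonal}. There is no genuine mathematical obstacle inside the present proof: all the depth resides in the cited topological results of \cite{juhasz-szentmiklossy}, \cite{dow-pavlov}, \cite{dow-countable-tightness} on small diagonal and countable tightness in compacta.
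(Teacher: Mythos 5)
Your proposal is correct and follows essentially the same route as the paper: reduce to the topological statement that compacta with small diagonal are metrizable (via Lemma~\ref{dualball-diagonal}), cite \cite{dow-pavlov} for its validity under {\sf PFA}, and cite Dow's Cohen-model result for its consistency with any value of $\mathfrak c$. The one piece of ``bookkeeping'' you flag is real and is resolved exactly as you suspect: the Cohen-model theorem in \cite{dow-countable-tightness} is stated for countably tight compacta, and the paper upgrades it to all compacta using the ZFC implication of \cite{juhasz-szentmiklossy} that small diagonal implies countable tightness (the step $(3)\Rightarrow(4)$ of Lemma~\ref{small-diagonal}).
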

	\begin{proof} It is shown in \cite{dow-pavlov} that assuming {\sf PFA}
		every compact Hausdorff space with small diagonal is metrizable.  So by Lemma \ref{dualball-diagonal}
		we conclude that $\cov(X)=\omega_1$ for every nonseparable Banach space $X$ under {\sf PFA}.
		Similarly Theorem 5.8 from \cite{dow-countable-tightness} shows that
		it is consistent with any possible size of the continuum (in models obtained from {\sf CH} model by adding Cohen reals) that each compact space with countable tightness has small diagonal if and only if it is metrizable. However, non-countably-tight compact spaces cannot have a small  diagonal (in ZFC) by the result of \cite{juhasz-szentmiklossy} that is the implication from (3) to (4) in Lemma \ref{small-diagonal}. 
		So by Lemma \ref{dualball-diagonal}
		we conclude that $\cov(X)=\omega_1$ for every nonseparable Banach space $X$ in these models as well.
	\end{proof}

	\section{Covering small subsets of Banach spaces  by countably many hyperplanes}\label{non}
	
	By Proposition \ref{basic-non} if $X$ is a Banach space of dimension bigger than 1
	the value of $\non(X)$
	(i.e.,  the minimal cardinality of a set not covered by countably many hyperplanes)
	is in the interval $[\dens(X), \cf([\dens(X)]^\omega)]$ if $\cf(\dens(X))$ is uncountable
	and is in the interval $[\dens(X)^+, \cf([\dens(X)]^\omega)]$ if  $\cf(\dens(X))$ is countable. 
	As we will see below, just purely set-theoretic known results imply that 
	under many assumptions these intervals reduce to singletons and so the values
	of $\non(X)$ are completely determined by $\dens(X)$. 
	It remains open, however, if
	$\non(X)=\cf([\dens(X)]^\omega)]$ for every nonseparable Banach space without any extra
	set-theoretic assumptions.

	\begin{proposition}\label{non-omegan}
		If $X$ is a Banach space with density $\omega_n$ for $n\in \N \backslash \{0\}$,
		then $\non(X)=\dens(X)=\cf([\dens(X)]^\omega)$.
	\end{proposition}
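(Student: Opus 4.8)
The plan is to show two things: first, the purely set-theoretic fact that $\cf([\omega_n]^\omega)=\omega_n$ for every $n\in\N\setminus\{0\}$; second, that this value is actually attained by $\non(X)$, i.e.\ every subset of $X$ of cardinality $<\omega_n$ can be covered by countably many hyperplanes. Combined with Proposition \ref{basic-non} (which gives $\dens(X)\le\non(X)\le\cf([\dens(X)]^\omega)$ together with $\dens(X)<\non(X)$ when $\cf(\dens(X))=\omega$, which never happens here since $\omega_n$ is a successor cardinal hence regular), these two facts pin down $\non(X)=\omega_n$.

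For the first part I would argue by induction on $n$. The case $n=1$ is immediate: $[\omega_1]^\omega$ has cofinality $\omega_1$ because any countable subset of $\omega_1$ is bounded, so the family $\{\alpha:\alpha<\omega_1\}$ (initial segments) is cofinal and of size $\omega_1$; and it cannot have smaller cofinality since $\omega_1$ is uncountable and a single countable set cannot be cofinal. For the inductive step, suppose $\cf([\omega_{n-1}]^\omega)=\omega_{n-1}$; fix a cofinal family $\mathcal G\subseteq[\omega_{n-1}]^\omega$ of that size. Given a countable $A\subseteq\omega_n$, since $\cf(\omega_n)=\omega_n>\omega$ there is $\beta<\omega_n$ with $A\subseteq\beta$, and $\beta$ has cardinality $\le\omega_{n-1}$; fixing bijections $e_\beta:\beta\to\omega_{n-1}$ (or $\to\beta$ for $\beta$ of smaller size, handled similarly) for each $\beta<\omega_n$, the set $e_\beta[A]$ lies in some $G\in\mathcal G$, so $A\subseteq e_\beta^{-1}[G]$. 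Thus $\{e_\beta^{-1}[G]:\beta<\omega_n,\ G\in\mathcal G\}$ is cofinal in $[\omega_n]^\omega$ and has cardinality $\omega_n\cdot\omega_{n-1}=\omega_n$; the lower bound $\omega_n\le\cf([\omega_n]^\omega)$ is clear since $[\omega_n]^\omega$ contains a $\subseteq$-increasing chain of length $\omega_n$ (initial segments of countable cofinality-type, or just note each cofinal family must in particular be cofinal among the $\omega_n$-many singletons' worth of growth).

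For the second part — that $\non(X)\ge\omega_n$, i.e.\ sets of size $<\omega_n$ are in $\I(X)$ — I would simply invoke Proposition \ref{basic-non}: if $Y\subseteq X$ with $|Y|<\dens(X)=\omega_n$, then $\overline{\lin(Y)}$ is a proper closed subspace, hence contained in a hyperplane, so $Y\in\I(X)$. This already gives $\non(X)\ge\omega_n$, and the reverse inequality $\non(X)\le\cf([\dens(X)]^\omega)=\omega_n$ is exactly the upper bound of Proposition \ref{basic-non}. So in fact the whole Proposition follows by plugging the cardinal-arithmetic computation $\cf([\omega_n]^\omega)=\omega_n$ into Proposition \ref{basic-non}, and no new Banach-space argument is needed.

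The main (and really only) obstacle is the cardinal arithmetic identity $\cf([\omega_n]^\omega)=\omega_n$; everything else is a direct citation of Proposition \ref{basic-non}. Here I expect the cleanest route is the inductive argument sketched above, with care taken over the bijections $e_\beta$ and over handling ordinals $\beta<\omega_n$ of cardinality $<\omega_{n-1}$ (for which one uses the inductive hypothesis at the appropriate lower level, or absorbs them into the same family). An alternative is to cite a standard fact from PCF theory or Kuratowski's theorem that $\cf([\kappa]^\omega)=\kappa$ for all $\kappa$ of uncountable cofinality below $\aleph_\omega$ — but since the paper aims to be self-contained regarding set theory, giving the short induction is preferable.
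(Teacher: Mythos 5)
Your proposal is correct and follows essentially the same route as the paper: establish $\cf([\omega_n]^\omega)=\omega_n$ by induction on $n$ (decomposing $\omega_n$ into smaller initial segments) and then read off $\non(X)=\omega_n$ from the two-sided bound of Proposition \ref{basic-non}. The paper merely states the induction in one line, whereas you supply the details (the bijections $e_\beta$ and the counting of the resulting cofinal family), which is exactly what the paper leaves to the reader.
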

	\begin{proof}
		By induction on $n\in \N$, using the decomposition of $\omega_n$ into smaller ordinals
		one proves that $\cf([\omega_n]^\omega)=\omega_n$. Now Proposition \ref{basic-non} implies
		that $\non(X)= \omega_n$.
	\end{proof}
	
	\begin{proposition}\label{kappa-functionals}
		Let $X$ be a Banach space of density $\kappa$ and dimension bigger than $1$. Suppose that there are functionals $\{x^*_\alpha:\alpha<\kappa\}\subseteq X^*$ such that for every $x\in X$ the set $Z_x=\{\alpha: x^*_\alpha(x)\neq 0\}$ is countable. Then $\non(X)=\cf([\kappa]^\omega)$. 
	\end{proposition}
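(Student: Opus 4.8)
The plan is to establish the two inequalities whose conjunction is the asserted equality, and only one of them needs work. With $\kappa=\dens(X)$, the upper bound $\non(X)\le\cf([\kappa]^\omega)$ is given by Proposition \ref{basic-non}. So the task is the reverse inequality, which amounts to showing that every $Y\subseteq X$ with $|Y|<\cf([\kappa]^\omega)$ already lies in $\I(X)$. As throughout this part of the paper, we work with $\kappa$ uncountable, and we take each $x^*_\alpha$ to be nonzero, so that each $\ker x^*_\alpha$ is a genuine hyperplane of $X$.

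Fix such a $Y$. First I would pass from $Y$ to the family of countable subsets of $\kappa$ that it determines: set $\mathcal Z=\{Z_y:y\in Y\}\subseteq[\kappa]^\omega$, where $Z_y=\{\alpha<\kappa:x^*_\alpha(y)\ne 0\}$ is countable by hypothesis. Since $|\mathcal Z|\le|Y|<\cf([\kappa]^\omega)$ and $\cf([\kappa]^\omega)$ is the least cardinality of a family of countable subsets of $\kappa$ that is cofinal under inclusion, the family $\mathcal Z$ is not cofinal; hence there is a single countable set $S\subseteq\kappa$ contained in no $Z_y$. This is really the only step with content: a family of fewer than $\cf([\kappa]^\omega)$ countable subsets of $\kappa$ is avoided, simultaneously, by one countable set.

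It remains to read off the covering. For each $y\in Y$, from $S\not\subseteq Z_y$ we obtain some $\alpha\in S$ with $\alpha\notin Z_y$, i.e.\ $x^*_\alpha(y)=0$; as $x^*_\alpha\ne 0$, the hyperplane $\ker x^*_\alpha$ contains $y$. Hence $Y\subseteq\bigcup_{\alpha\in S}\ker x^*_\alpha$, a countable union of hyperplanes, so $Y\in\I(X)$. This gives $\non(X)\ge\cf([\kappa]^\omega)$, and with Proposition \ref{basic-non} we conclude $\non(X)=\cf([\kappa]^\omega)$. I do not anticipate a genuine obstacle: once the ``one escaping countable set'' observation is in hand the proof is immediate, and the only things to watch are bookkeeping points --- quoting the upper bound from Proposition \ref{basic-non}, and keeping the $x^*_\alpha$ nonzero so that their kernels are hyperplanes.
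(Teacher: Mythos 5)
Your argument is correct and is essentially the paper's own proof: both pass from $Y$ to the family $\{Z_y:y\in Y\}$, use $|Y|<\cf([\kappa]^\omega)$ to find a single countable $S\subseteq\kappa$ contained in no $Z_y$, and cover $Y$ by the kernels $\ker x^*_\alpha$ for $\alpha\in S$, with the upper bound quoted from Proposition \ref{basic-non}. Your explicit remark that the $x^*_\alpha$ should be nonzero (so their kernels really are hyperplanes) is a harmless tidying of a point the paper leaves implicit.
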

	\begin{proof}
		Let $\lambda<\cf([\kappa]^\omega)$ and $Y=\{x_\alpha: \alpha<\lambda\}\subseteq X$. By the assumption the family $\mathcal Z=\{Z_x:x\in Y\}$  is not cofinal in $[\kappa]^\omega$. Pick $A\in [\kappa]^\omega$, which is not included in any element of $\mathcal Z$. Then for every $x\in Y$ there is $\alpha\in A$ such that $x^*_\alpha(x)=0$, so $x$ is in kernel of $x^*_\alpha$. Thus kernels of $x^*_\alpha$s
		for $\alpha\in A$  cover $Y$, which proves that $\non(X)\geq \cf([\kappa]^\omega)$. The inequality $\non(X)\leq \cf([\kappa]^\omega)$ is true by Proposition
		\ref{basic-non}.
	\end{proof}
	
	\begin{proposition}\label{fund-non}
		If a Banach space $X$ of density $\kappa$ and dimension bigger than $1$ admits a fundamental biorthogonal system, then $\non(X)=\cf([\kappa]^\omega)$.
	\end{proposition}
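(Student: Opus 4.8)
The plan is to obtain this as an application of Proposition \ref{kappa-functionals}: one needs functionals $\{x^*_\alpha:\alpha<\kappa\}\subseteq X^*$ whose ``supports'' $\{\alpha:x^*_\alpha(x)\neq 0\}$ are countable for each $x\in X$, and the dual vectors of a fundamental biorthogonal system are the obvious candidate. Separable $X$ are already covered by Theorem \ref{main-separable}, so I would assume $X$ nonseparable, whence $\kappa=\dens(X)$ is uncountable and $X$ is infinite-dimensional. A small preliminary point is that the given fundamental biorthogonal system $\{(x_i,x^*_i):i\in I\}$ may be assumed indexed by $\kappa$ itself. Indeed $|I|\geq\kappa$ since the (infinitely many) vectors $x_i$ are linearly dense in $X$, and $|I|\leq\kappa$ because, after replacing $(x_i,x^*_i)$ by $(x_i/\|x_i\|,\|x_i\|x^*_i)$ so that $\|x_i\|=1$ and $\|x^*_i\|\geq 1$, the estimate $\|x_i-x_j\|\geq |x^*_i(x_i-x_j)|/\|x^*_i\|=1/\|x^*_i\|$ shows that each of the countably many pieces $\{i\in I:n\leq\|x^*_i\|<n+1\}$ ($n\geq 1$) indexes a $(n+1)^{-1}$-separated, hence size-$\leq\kappa$, subset of $X$. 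Thus $|I|=\kappa$ and we may write the system as $\{(x_\alpha,x^*_\alpha):\alpha<\kappa\}$.

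The key step is then that $Z_x=\{\alpha<\kappa:x^*_\alpha(x)\neq 0\}$ is countable for every $x\in X$. Since $X=\overline{\lin\{x_\alpha:\alpha<\kappa\}}$, I would pick finitely supported vectors $y_n=\sum_{\beta\in F_n}c_{n,\beta}x_\beta$ with $y_n\to x$ and set $S=\bigcup_n F_n\in[\kappa]^\omega$. For $\alpha\notin S$ biorthogonality gives $x^*_\alpha(y_n)=0$ for all $n$, so $x^*_\alpha(x)=\lim_n x^*_\alpha(y_n)=0$ by continuity of $x^*_\alpha$; hence $Z_x\subseteq S$ is countable. As each $x^*_\alpha\neq 0$ (because $x^*_\alpha(x_\alpha)=1$), the family $\{x^*_\alpha:\alpha<\kappa\}$ satisfies the hypothesis of Proposition \ref{kappa-functionals}, which at once yields $\non(X)=\cf([\kappa]^\omega)$.

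I do not expect a genuine obstacle here. The only mildly non-routine point is the preliminary remark that a fundamental biorthogonal system in a space of density $\kappa$ has exactly $\kappa$ members; this is standard, but worth recording, since it is precisely what allows Proposition \ref{kappa-functionals} to be invoked without modification. After that, the countability of the supports is an immediate consequence of linear density together with the continuity of the dual functionals, and the conclusion is a direct citation.
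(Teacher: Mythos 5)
Your proposal is correct and follows essentially the same route as the paper: both arguments verify that the dual functionals of a fundamental biorthogonal system have countable ``supports'' $Z_x$ (via approximating $x$ by finite linear combinations of the $x_\alpha$ and using continuity) and then cite Proposition \ref{kappa-functionals}. The only difference is that you carefully justify that the index set of the system has cardinality exactly $\kappa$, a point the paper takes for granted by writing the system as $\{(x_\alpha,x^*_\alpha)\}_{\alpha<\kappa}$.
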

	\begin{proof}
		Let $\{x_\alpha, x_\alpha^*\}_{\alpha<\kappa}$ be a fundamental biorthogonal system. For every $x$ pick a countable set $L_x\subset \kappa$ such that $x\in \overline{\lin \{x_\alpha: \alpha\in L_x\}}$. Then $Z_x=\{\alpha: x^*_\alpha(x)\neq 0\}\subseteq L_x$, so $Z_x$ is also countable. Hence $x_\alpha^*$s satisfy conditions of Proposition \ref{kappa-functionals}.
	\end{proof}
	
	\begin{proposition}\label{non-gch-mm}
		Assume that $\kappa^\omega=\kappa$ for all regular $\kappa>\omega_\omega$. Let $X$ be a Banach space of dimension bigger than $1$. Then 
		\begin{enumerate}
			\item If $\cf(\dens(X))=\omega$, then $\non(X)=\dens(X)^+$,
			\item If $\cf(\dens(X))>\omega$, then $\non(X)=\dens(X)$.
		\end{enumerate}
		In particular the above equations hold under {\sf GCH} or {\sf MM}.
	\end{proposition}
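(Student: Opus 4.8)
The plan is to reduce the statement to the combinatorial fact about $[\kappa]^\omega$ together with Proposition \ref{basic-non}. Recall that by Proposition \ref{basic-non} we always have
\[
\non(X)\in[\dens(X),\cf([\dens(X)]^\omega)]
\]
when $\cf(\dens(X))>\omega$, and $\non(X)\in[\dens(X)^+,\cf([\dens(X)]^\omega)]$ when $\cf(\dens(X))=\omega$. Hence it suffices to pin down $\cf([\kappa]^\omega)$ under the hypothesis $\kappa^\omega=\kappa$ for all regular $\kappa>\omega_\omega$, and check that the resulting value coincides with $\dens(X)$ (resp. $\dens(X)^+$) in each of the two cases. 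So the whole argument is a case analysis on $\kappa=\dens(X)$, split according to $\cf(\kappa)$.

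First I would treat the case $\cf(\kappa)>\omega$. If moreover $\kappa$ is regular, then either $\kappa\le\omega_\omega$, in which case $\kappa=\omega_n$ for some $n$ and $\cf([\omega_n]^\omega)=\omega_n$ by Proposition \ref{non-omegan}, or $\kappa>\omega_\omega$ regular and then the hypothesis gives $\kappa^\omega=\kappa$, whence $\cf([\kappa]^\omega)=\kappa$ as well (any cofinal family in $[\kappa]^\omega$ has size at most $\kappa^\omega=\kappa$, and at least $\kappa$ since the singletons-of-cofinal-sequences argument, or simply $\cf([\kappa]^\omega)\ge\cf(\kappa)=\kappa$). If $\kappa$ is singular of uncountable cofinality, I would use the standard decomposition: write $\kappa=\sup_{\xi<\cf(\kappa)}\kappa_\xi$ with each $\kappa_\xi$ regular (or just smaller) and satisfying $\cf([\kappa_\xi]^\omega)=\kappa_\xi$ by induction on the rank, together with the fact that for uncountable $\cf(\kappa)$ every countable subset of $\kappa$ is contained in some $\kappa_\xi$; this yields $\cf([\kappa]^\omega)\le\sum_{\xi<\cf(\kappa)}\cf([\kappa_\xi]^\omega)=\kappa$, and the reverse inequality is trivial. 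In all subcases $\cf([\kappa]^\omega)=\kappa=\dens(X)$, so Proposition \ref{basic-non} forces $\non(X)=\dens(X)$, proving (2).

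Next the case $\cf(\kappa)=\omega$. Here I would show $\cf([\kappa]^\omega)=\kappa^+$ under the hypothesis. The lower bound $\cf([\kappa]^\omega)>\kappa$ is classical (a theorem of folklore going back essentially to the fact that a cofinal family of size $\kappa$ in $[\kappa]^\omega$ would let one diagonalize, using $\cf(\kappa)=\omega$, to build a countable set escaping all of them — this is exactly the last paragraph of the proof of Proposition \ref{basic-non}, which shows $\dens(X)<\non(X)$). For the upper bound: fix an increasing sequence $\kappa=\sup_n\mu_n$ with $\mu_n$ regular (or at least with $\cf([\mu_n]^\omega)=\mu_n^{+}$ or $\mu_n$, handled by the already-proved case since $\cf(\mu_n)$ may be taken uncountable, e.g. $\mu_n=\omega_n$-type successors), so that $\cf([\mu_n]^\omega)\le\mu_n^{+}\le\kappa$; then a cofinal family for $[\kappa]^\omega$ can be assembled from cofinal families for the $[\mu_n]^\omega$ by closing under finite unions, giving size $\le\kappa^\omega$. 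Now $\kappa^\omega$: since $\cf(\kappa)=\omega$ we have $\kappa^\omega>\kappa$, so $\kappa^\omega\ge\kappa^+$; conversely $\kappa^+$ is regular and, if $\kappa^+>\omega_\omega$, the hypothesis gives $(\kappa^+)^\omega=\kappa^+$, and by Hausdorff's formula $\kappa^\omega\le(\kappa^+)^\omega=\kappa^+$ (the case $\kappa^+\le\omega_\omega$ being $\kappa=\omega_n$, covered by Proposition \ref{non-omegan}). Hence $\kappa^\omega=\kappa^+=\cf([\kappa]^\omega)$, and Proposition \ref{basic-non} gives $\non(X)=\dens(X)^+$, proving (1).

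Finally I would remark that {\sf GCH} trivially implies $\kappa^\omega=\kappa$ for all regular $\kappa>\omega$ (indeed $\kappa^\omega=\kappa$ whenever $\cf(\kappa)>\omega$ and $=\kappa^+$ when $\cf(\kappa)=\omega$), and that {\sf MM} implies $2^\omega=\omega_2$ and $\kappa^\omega=\kappa$ for all regular $\kappa\ge\omega_2$ (this is a well-known consequence of {\sf MM}, e.g. via the fact that {\sf MM} implies $\kappa^\omega=\kappa$ for regular $\kappa\ge\omega_2$; a reference such as \cite{jech} can be cited), so in particular the hypothesis $\kappa^\omega=\kappa$ for regular $\kappa>\omega_\omega$ holds in both cases. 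The main obstacle I anticipate is getting the singular-cardinal bookkeeping exactly right — namely verifying $\cf([\mu_n]^\omega)\le\kappa$ for the approximating regulars without circularity in the induction, and correctly invoking Hausdorff's recursion / the hypothesis only at regular cardinals above $\omega_\omega$; everything else is a direct appeal to Proposition \ref{basic-non} and Proposition \ref{non-omegan}.
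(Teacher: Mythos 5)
Your proposal is correct and follows essentially the same route as the paper: compute $\cf([\kappa]^\omega)$ by cases on $\cf(\kappa)$, using the hypothesis at regular cardinals above $\omega_\omega$, a decomposition into smaller regular pieces for singular $\kappa$ of uncountable cofinality, the bound $\kappa^\omega\le(\kappa^+)^\omega=\kappa^+$ for countable cofinality, and then Propositions \ref{basic-non} and \ref{non-omegan}. The only local slip is the phrase ``closing under finite unions'' in the countable-cofinality case (a family closed under finite unions has size only $\kappa$ and cannot be cofinal, since $\cf([\kappa]^\omega)>\kappa$ there); but the bound you actually use, $\cf([\kappa]^\omega)\le|[\kappa]^\omega|=\kappa^\omega$, is trivially true, so nothing is lost.
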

	\begin{proof}
		By assumption we have $\cf([\kappa]^\omega)=\kappa$ for all regular $\kappa>\omega_\omega$. If $\kappa$ is singular of uncountable cofinality then $\kappa^\omega= \Sigma_{\mu<\kappa} \ \mu^\omega=\Sigma_{\mu<\kappa} \ \mu = \kappa$ so $\cf([\kappa]^\omega)=\kappa$. If $\cf(\kappa)=\omega$ then $\kappa^\omega>\kappa$ and $\kappa^\omega\leq (\kappa^+)^\omega=\kappa^+$ so $\cf([\kappa]^\omega)\leq\kappa^+$. Hence for $\dens(X)\geq\omega_\omega$ the equalities (1) and (2) follow from Proposition \ref{basic-non}. The case when $\dens(X)= \omega_n$ for some $n\in\N$ is covered by Proposition \ref{non-omegan}.
		
		For a limit cardinal $\kappa$ of uncountable cofinality under {\sf GCH} we have
		$$\kappa^\omega=\Sigma_{\lambda<\kappa}\lambda^\omega\leq \Sigma_{\lambda<\kappa}\lambda^+
		\leq \kappa^2=\kappa$$
		so $\kappa^\omega=\kappa$. For successor cardinals we have $(\kappa^+)^\omega=\kappa^\omega\kappa^+\leq (\kappa^+)^2=\kappa^+$.
		
		If {\sf MM} holds, then by Theorem 37.13 of \cite{jech} we have $\kappa^\omega=\kappa^{\omega_1}=\kappa$ for each regular $\kappa>\omega_1$.
		
	\end{proof}

	\begin{proposition}\label{non-anyc} It is consistent with any possible 
		size of the continuum that for every  Banach space $X$ of dimension bigger than $1$ we have
		\begin{enumerate}
			\item If $\cf(\dens(X))=\omega$, then $\non(X)=\dens(X)^+$,
			\item If $\cf(\dens(X))>\omega$, then $\non(X)=\dens(X)$.
		\end{enumerate}
	\end{proposition}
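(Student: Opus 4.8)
The plan is to prove (1) and (2) by a direct forcing argument rather than from Proposition \ref{non-gch-mm}: its hypothesis that $\kappa^\omega=\kappa$ for all regular $\kappa>\omega_\omega$ forces $\cc\le\omega_{\omega+1}$ (if there were a regular $\kappa$ with $\omega_\omega<\kappa<\cc$, then $\kappa^\omega\ge 2^\omega>\kappa$), so it is of no use for arbitrary $\cc$. Instead, let $V$ be a model of {\sf GCH} and let $\lambda$ be an arbitrary cardinal with $\cf(\lambda)>\omega$ --- by König's theorem these are exactly the cardinals that can consistently be the value of the continuum. Force with the poset $\PP$ of all finite partial functions from $\lambda$ to $2$ and pass to $V[G]$. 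Since $\PP$ is ccc it preserves cardinals and cofinalities, and a standard nice-names count together with {\sf GCH} and $\cf(\lambda)>\omega$ gives $\cc=\lambda$ in $V[G]$. It then remains to verify (1) and (2) in $V[G]$.

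The heart of the matter is that $\PP$ does not increase $\cf([\kappa]^\omega)$ for any cardinal $\kappa$. First I would check: if $\dot A$ is a $\PP$-name with $\forces_\PP\dot A\in[\check\kappa]^\omega$, then there is $B\in([\kappa]^\omega)^V$ with $\forces_\PP\dot A\subseteq\check B$. Indeed, fix names $\dot a_n$ $(n\in\omega)$ with $\forces_\PP\dot A=\{\dot a_n:n\in\omega\}$; for each $n$ the set $B_n=\{\xi<\kappa:(\exists p\in\PP)\ p\forces_\PP\dot a_n=\check\xi\}$ lies in $V$ and is countable, since for distinct $\xi$ the witnessing conditions are pairwise incompatible and $\PP$ is ccc. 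Then $B=\bigcup_{n\in\omega}B_n$ is as required. Consequently, any $\mathcal F\in V$ cofinal in $([\kappa]^\omega)^V$ stays cofinal in $([\kappa]^\omega)^{V[G]}$, so $\cf([\kappa]^\omega)^{V[G]}\le\cf([\kappa]^\omega)^V$.

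Now fix, in $V[G]$, a Banach space $X$ of dimension bigger than $1$ and set $\kappa=\dens(X)$. If $X$ is separable then $\cf(\kappa)=\omega$ and Theorem \ref{main-separable} gives $\non(X)=\omega_1=\kappa^+$, so assume $\kappa\ge\omega_1$. Under {\sf GCH} one has in $V$ that $\kappa^\omega=\kappa$ when $\cf(\kappa)>\omega$ and $\kappa^\omega=\kappa^+$ when $\cf(\kappa)=\omega$ (the cardinal arithmetic already used in the proof of Proposition \ref{non-gch-mm}), so $\cf([\kappa]^\omega)^V\le(\kappa^\omega)^V$, which equals $\kappa$ if $\cf(\kappa)>\omega$ and $\kappa^+$ if $\cf(\kappa)=\omega$. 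Combining this with the previous paragraph and with Proposition \ref{basic-non} (a {\sf ZFC} theorem, hence valid in $V[G]$), which gives $\non(X)\le\cf([\kappa]^\omega)^{V[G]}$ together with $\non(X)\ge\kappa$ and, when $\cf(\kappa)=\omega$, $\non(X)\ge\kappa^+$, we obtain in $V[G]$ that $\non(X)=\kappa$ if $\cf(\kappa)>\omega$ and $\non(X)=\kappa^+$ if $\cf(\kappa)=\omega$. Since $\cc=\lambda$ was an arbitrary cardinal of uncountable cofinality, this is precisely the assertion.

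I expect the single point requiring care to be the covering lemma for $\PP$-names of countable subsets of $\kappa$, and hence the non-increase of $\cf([\kappa]^\omega)$ under the forcing; the value of the continuum after adding Cohen reals, preservation of cardinals, and the {\sf GCH} computation of $\kappa^\omega$ are all routine and already appear, in some form, in the preceding sections.
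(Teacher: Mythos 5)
Your argument is correct and is essentially the paper's proof: start from a model of {\sf GCH}, add Cohen reals to make $\mathfrak c=\lambda$, use the c.c.c.\ to see that ground-model countable sets of ordinals remain cofinal in $[\kappa]^\omega$, and then transfer the {\sf GCH} computation of $\cf([\kappa]^\omega)$ to the extension before applying Proposition \ref{basic-non}. You merely fill in the details (the nice-name count, the covering of names for countable sets by ground-model countable sets) that the paper's proof states without proof.
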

	\begin{proof} Start with a model $V$ of {\sf GCH} and increase the continuum
		using a c.c.c. forcing (e.g. add Cohen reals). The cardinals and their cofinalities do not change.
		Moreover $[\kappa]^\omega\cap V$ is cofinal in $[\kappa]^\omega$ as any
		countable set of ordinals in a c.c.c. extension is included in a countable set in the ground
		model, so the calculations from the proof of Proposition \ref{non-gch-mm} remain true.
	\end{proof}

	\begin{proposition}\label{measurable} For every  Banach space $X$ of dimension bigger than $1$ we have
		\begin{enumerate}
			\item If $\cf(\dens(X))=\omega$, then $\non(X)=\dens(X)^+$,
			\item If $\cf(\dens(X))>\omega$, then $\non(X)=\dens(X)$,
		\end{enumerate}
		unless  there is a measurable cardinal in an inner model.
	\end{proposition}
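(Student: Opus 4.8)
The plan is to separate the cases, reduce the interesting one to a pure question of cardinal arithmetic, and then quote the Dodd--Jensen covering lemma. Throughout I would argue the contrapositive: assuming that there is no inner model with a measurable cardinal, I would establish the equalities (1) and (2).

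First I would handle the cases needing no extra hypothesis. If $X$ is separable, then $\non(X)=\omega_1=\dens(X)^+$ by Theorem \ref{main-separable}, and if $\dens(X)=\omega_n$ for some $n\in\N$, then $\non(X)=\dens(X)$ by Proposition \ref{non-omegan}. So I would assume $\kappa:=\dens(X)\geq\omega_\omega$. By Proposition \ref{basic-non} we have $\non(X)\in[\kappa,\cf([\kappa]^\omega)]$ when $\cf(\kappa)>\omega$ and $\non(X)\in[\kappa^+,\cf([\kappa]^\omega)]$ when $\cf(\kappa)=\omega$, so it is enough to prove
\[
\cf([\kappa]^\omega)=\kappa \text{ when } \cf(\kappa)>\omega, \qquad \cf([\kappa]^\omega)=\kappa^+ \text{ when } \cf(\kappa)=\omega .
\]
Using the cofinal family $\bigcup_{\alpha<\kappa}\mathcal F_\alpha$, with $\mathcal F_\alpha$ cofinal in $[\alpha]^\omega$, exactly as in the proof of Proposition \ref{non-gch-mm}, a routine induction on $\kappa$ reduces both equalities to the single statement
\[
\cf([\mu]^\omega)=\mu^+ \text{ for every singular cardinal } \mu \text{ of cofinality } \omega ,
\]
the base case being Proposition \ref{non-omegan}. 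Observe that the size of $\mathfrak c$ is irrelevant here, since $\non(X)$ is bounded by $\cf([\kappa]^\omega)$, not by $\kappa^\omega$.

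Second, and this is the only nonelementary step, I would invoke the Dodd--Jensen covering lemma: if there is no inner model with a measurable cardinal, then the covering lemma holds for the core model $K$, which moreover satisfies {\sf GCH} and computes successors of singular cardinals correctly. It is classical (see the treatment of the Covering Theorem in \cite{jech}) that the covering lemma implies the Singular Cardinals Hypothesis; running the same argument for a product $\prod_{n}\mu_n$ witnessing $\cf(\mu)=\omega$ gives $\cf([\mu]^\omega)=\mu^+$ for every singular $\mu$ of cofinality $\omega$. Feeding this into the reduction above yields the required values of $\cf([\kappa]^\omega)$, hence the equalities (1) and (2) for $\non(X)$; this is the contrapositive of the assertion of the proposition.

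The main obstacle is not in the argument itself, which is a short reduction using the results of Section \ref{non}, but in the depth of the result quoted: the implication ``failure of {\sf SCH} implies the existence of an inner model with a measurable cardinal'' is exactly the content of the Dodd--Jensen covering theorem, and I would simply cite it, in keeping with the policy of the paper of deriving all metamathematical statements from results already present in the literature.
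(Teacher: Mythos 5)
Your proof is correct and follows essentially the same route as the paper: argue the contrapositive, invoke the Dodd--Jensen covering lemma to get an inner model satisfying {\sf GCH} over which $[\kappa]^\omega$ is covered, and thereby bound $\cf([\kappa]^\omega)$ by $\kappa$ or $\kappa^+$ as appropriate, the lower bounds coming from Proposition \ref{basic-non}. The only difference is organizational: you first reduce by induction in $V$ to the statement $\cf([\mu]^\omega)=\mu^+$ for singular $\mu$ of countable cofinality, whereas the paper applies the covering lemma and the {\sf GCH} computation of Proposition \ref{non-gch-mm} inside the inner model directly for all $\kappa$ at once.
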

	\begin{proof} If there is no  measurable cardinal in an inner model, then there is an
		inner model $M$ which satisfies {\sf GCH} and satisfies the covering lemma i.e.,
		$[\kappa]^{\omega_1}\cap M$ is cofinal in $[\kappa]^{\omega_1}$ for each cardinal $\kappa$ 
		(see \cite{dodd-jensen}).
		This implies that $[\kappa]^{\omega}\cap M$ is cofinal in $[\kappa]^{\omega}$ 
		for each cardinal $\kappa$. So since $M$ satisfies {\sf GCH},
		Proposition \ref{non-gch-mm} implies the theorem. (For a similar argument cf. the proof of Theorem 13.3 (d) in \cite{rinot}.)
	\end{proof}
	
	Recall that assuming the existence of a suitably large cardinal the consistency
	of $2^{\omega_n}<\omega_\omega$ and $2^{\omega_\omega}=\omega_{\omega+k}$ for any $n\in \N$ and $k>1$ was proved in
	\cite{magidor} (this problem was also considered with weaker assumptions in \cite{golshani}).
	In this case $\cf([\omega_{\omega}]^\omega)=\omega_{\omega+k}$ because
	$[\omega_{\omega}]^\omega= \bigcup\{[A]^\omega: A\in \F\}$ for any cofinal
	family in $[\omega_{\omega}]^\omega$ and $|\bigcup\{[A]^\omega: A\in \F\}|\leq \mathfrak{c}\cdot|\F|=|\F|$
	as $\mathfrak c<\omega_\omega$. It follows that $\cf([\omega_{\omega+m}]^\omega)\geq\omega_{\omega+k}$
	for $0\leq m<k$. So not only the existence of Banach spaces of density $\omega_\omega$  which assume the value of $\non$ smaller than
	in Propositions \ref{non-gch-mm}, \ref{non-anyc} if $k\geq 3$ but also
	of a regular density $\omega_{\omega+1}$ is not excluded by cardinal arithmetic in the considered model.

	\section{Final remarks}\label{final}
	
	\subsection{Densities of quotients of Banach spaces}
	
	The famous Separable Quotient Problem asks if every infinite dimensional Banach space
	has a separable infinite dimensional quotient. In the direction of bounding
	the densities of quotients of Banach spaces, one can easily prove that every Banach $X$ space has 
	a infinite dimensional quotient whose density is not bigger then $\mathfrak c$. 
	In this light the following is natural to ask:

	\begin{question}\label{question-quotients} $ $
		Is it true in ZFC that every nonseparable Banach space has a  quotient of density $\omega_1$?
	\end{question}

	By Propositions \ref{basic-cov}  and \ref{cov-monotone}
	the positive answer to question \ref{question-quotients}  would imply that
	$\cov(X)=\omega_1$  for every Banach space $X$. It would also imply that the Separable Quotient Problem
	consistently has  positive answer since it is proved in \cite{stevo-biort} that it is consistent that
	all Banach spaces of density $\omega_1$ have infinite dimensional separable quotients. 
	In fact, for this it would be enough to obtain the consistency of the positive answer to Question \ref{question-quotients}  with the additional set-theoretic assumptions of \cite{stevo-biort}, like the {\sf PFA}.

	\subsection{Banach spaces with no fundamental biorthogonal systems}
	Theorems \ref{main-cov} and \ref{main-non} determine the values of $\cov$ and $\non$ for
	Banach spaces admitting fundamental biorthogonal systems. So looking for
	spaces witnessing different values of $\cov$ or $\non$ we should understand better
	spaces not admitting such systems. The first and classical example of such a space  is the subspace 
	$\ell_\infty^c(\mathfrak c^+)$ of $\ell_\infty(\mathfrak c^+)$ consisting
	of elements with countable supports (\cite{godun-kadec}, \cite{plichko-fund}). However it contains a copy of $\ell_\infty$ and so
	$\ell_1(\omega_1)$ so $\cov(\ell_\infty^c(\lambda))=\omega_1$ for any infinite $\lambda$
	by Theorem \ref{main-cov}. Moreover
	Proposition \ref{kappa-functionals} implies that $\non(\ell_\infty^c(\mathfrak \lambda))=\cf([\lambda]^\omega)$
	for any $\lambda>\mathfrak c^+$ as $\dens(\ell_\infty^c(\mathfrak \lambda))=\lambda$ in such a case.
	Other reason for not admitting a fundamental biorthogonal system in a nonseparable space is not admitting
	any uncountable biorthogonal system: 
	The Kunen line and the examples of \cite{jordi-stevo}, \cite{rolewicz}, \cite{christina} have all
	density $\omega_1$, so they have $\cov=\omega_1$ by Proposition \ref{basic-cov}.
	The only known Banach space of density bigger than $\omega_1$ with no uncountable biorthogonal systems
	is that of \cite{christina-vtt}. However it is of the form $C(K)$ with $K$ scattered so
	Theorem \ref{main-cov} implies  that $\cov=\omega_1$. It also has density $\omega_2$,
	so Theorem \ref{main-non} implies that its $\non$ is $\omega_2=\cf([\omega_2]^\omega)$.

	\subsection{A question on compact Hausdorff spaces} By Lemma \ref{dualball-diagonal}
	positive answer to the following question would imply that
	$\cov(X)=\omega_1$ for every nonseparable Banach space $X$:
	
	\begin{question}\label{compact-omega1} Is it provable that
		every nonmetrizable compact Hausdorff space $K$ admits a subspace
		$L\subseteq K$ of cardinality $\omega_1$  such that for no $f\in C(K)$ the restriction $f|L$ is injective?
	\end{question}
	
	Recall that it was proved in \cite{dow-compact} that every nonmetrizable compact Hausdorff space admits
	a subspace of size $\omega_1$ which is nonmetrizable. Moreover the
	above result and  Proposition 11 of \cite{dow-pavlov} imply that every nonmetrizable compact Hausdorff space $K$ admits a subspace $L\subseteq K$ of cardinality $\omega_1$  such that for no $f\in C(K)$ 
	we have $f^{-1}[\{f(x)\}]=\{x\}$ for all $x\in L$.

	\bibliographystyle{amsplain}
	
\end{document}